\documentclass[amstex, 11pt]{amsart}
\usepackage[dvipdfmx]{graphicx}
\usepackage[all]{xy}
\usepackage{here}
\newtheorem{thm}{Theorem}[section]
\newtheorem{Pro}{Proposition}[section]
\newtheorem{lemma}{Lemma}[section]
\newtheorem{Cor}{Corollary}[section]
\newtheorem{Mythm}{Theorem}

\theoremstyle{definition}
\newtheorem{Def}{Definition}[section]

\theoremstyle{definition}
\newtheorem{Rem}{Remark}[section]
\numberwithin{equation}{section}

\begin{document}
% Definition of title page:
\title[Holomorphic motion and monodromy]{Extension of
    holomorphic motions \\ and monodromy}
\author{
    Hiroshige Shiga    % insert author(s) here
}
\address{Department of Mathematics,
Tokyo Institute of Technology \\
O-okayama, Meguro-ku Tokyo, Japan} 

\email{shiga@math.titech.ac.jp}
\date{\today}    % optional
\keywords{Holomorphic motions, Quasiconformal maps,
Teichm\"uller spaces}
\subjclass[2000]{Primary 32G15, Secondary 30F60, 37F30.}
\thanks{
The author was partially supported by the Ministry of Education, Science, Sports
and Culture, Japan;
Grant-in-Aid for Scientific Research (B), 16H03933, 2016--2020.}

\begin{abstract}
  Let $E$ be a closed set in the Riemann sphere $\widehat{\mathbb{C}}$. 
  We consider a holomorphic motion $\phi$ of $E$ over a complex manifold $M$, that is, a holomorphic family of injections on $E$ parametrized by $M$.
  It is known that if $M$ is the unit disk $\Delta$ in the complex plane, then any holomorphic motion of $E$ over $\Delta$ can be extended to a holomorphic motion of the Riemann sphere over $\Delta$.
  
  In this paper, we consider conditions under which a holomorphic motion of $E$ over a non-simply connected Riemann surface $X$ can be extended to a holomorphic motion of $\widehat{\mathbb{C}}$ over $X$. Our main result shows that a topological condition, the triviality of the monodromy, gives a necessary and sufficient condition for a holomorphic motion of $E$ over $X$ to be extended to a holomorphic motion of $\widehat{\mathbb{C}}$ over $X$.
  
  We give topological and geometric conditions for a holomorphic motion over a Riemann surface to be extended. We also apply our result to a lifting problem for holomorphic maps to Teichm\"uller spaces. 
\end{abstract}
\maketitle
\section{Introduction}
A family of injections depending holomorphically on complex parameters is called a {\em holomorphic motion} (see Definition \ref{dfn:holomorphic motion} for the precise definition), which is a powerful tool for complex dynamics, the deformation theory of Kleinian groups and Teichm\"uller theory. 
It first appeared in the $\lambda$-lemma of a paper \cite{Mane1983} by Man\'e, Sad and Sullivan.
The lemma says that every holomorphic motion of a set $E$ in the Riemann sphere $\widehat{\mathbb C}$ is extended to a holomorphic motion of the closure of $E$.
Since then, the theory of holomorphic motions has been rapidly developed. 

In fact, the $\lambda$-lemma was improved by Bers-Royden(\cite{Bers1986a}) and Sullivan-Thurston (\cite{Sullivan1986}); Slodkowski (\cite{Slodkowski1991}, \cite{Slodkowski1995}) showed that every holomorphic motion of a closed set in $\widehat{\mathbb{C}}$ over the unit disk $\Delta\subset\mathbb{C}$ is extended to a holomorphic motion of $\widehat{\mathbb{C}}$ over $\Delta$.
Moreover, Earle, Kra and Krushkal (\cite{Earle1994}) proved that if a holomorphic motion over $\Delta$ is $G$-equivariant for a subgroup $G$ of M\"obius transformations (see Definition \ref{dfn:G-equivariance}), then it is extended to a holomorphic motion of $\widehat{\mathbb{C}}$ which is also $G$-equivariant.  
 
 On the other hand, if the parameter space of a holomorphic motion is not simply connected, then the holomorphic motion cannot be extended to a holomorphic motion of $\widehat{\mathbb{C}}$ in general (see \cite{BJMS2012} for related results). 

 In this paper, we consider conditions of a holomorphic motion over a non-simply connected Riemann surface $X$ under which the holomorphic motion can be extended to a holomorphic motion of $\widehat{\mathbb{C}}$ over $X$.
 In fact, we give a necessary and sufficient condition for a holomorphic motion over a Riemann surface to be extended to a holomorphic motion of $\widehat{\mathbb C}$ over the Riemann surface.
 
 We make some examples of holomorphic motions including a counter-example to a claim of Chirka's paper \cite{Chirka2004}.
 
 In the final section, we give an application of our results to a lifting problem for holomorphic maps on a Riemann surface to Teichm\"uller space. 
 We also give topological and geometric conditions for a holomorphic motion over a Riemann surface to be extended.
 
 \medskip
 
{\bf Acknowledgment.} The author thanks Professor S. Mitra for his valuable suggestions and comments.

\section{Preliminaries and statements of main results}
 We begin with the precise definition of holomorphic motions.
\begin{Def}
\label{dfn:holomorphic motion}
	Let $M$ be a complex manifold (or complex Banach manifold in general,) with a basepoint $p_0$ and $E$ a subset of $\widehat{\mathbb{C}}$. 
	A map $\phi : M\times E\to \widehat{\mathbb{C}}$ is called a \textit{holomorphic motion} of $E$ over $M$ if it satisfies the following conditions.
	\begin{enumerate}
  \item $\phi(p_0, z)=z$ for every $z\in E$;
  \item for each $p\in M$, $\phi_{p}(\cdot) :=\phi(p, \cdot)$ is an injection on $E$;
  \item for each $z\in E$, $\phi (\cdot, z)$ is holomorphic on $M$;
\end{enumerate}
Furthermore, the holomorphic motion $\phi$ is called \emph{normalized} if $E$ contains $0. 1$ and $\infty$ and $\phi_{p}$ fixes $0, 1$ and $\infty$ for any $p\in M$.
\end{Def}
It is always possible that a given holomorphic motion is changed to a normalized one by conjugating M\"obius transformations.  Throughout this paper, we always assume that a holomorphic motion is normalized.

As we have noted above, we see from the $\lambda$-lemma (\cite{Mane1983}) that every holomorphic motion of a set $E$ is extended to a holomorphic motion of the closure of $E$.
Hence, without loss of generality, we may assume that the set $E$ is closed.

An important problem on holomorphic motions which we consider in this paper, is the extension problem, that is, we consider conditions under which the holomorphic motion of a subset $E$ over $M$  is extended to a holomorphic motion of $\widehat{\mathbb{C}}$ over $M$. 

We have already known some answers to the problem (\cite{Bers1986a}, \cite{Slodkowski1991}, \cite{Slodkowski1995}).
\begin{thm}
\label{thm:old}
	Let $\phi : M\times E\to \widehat{\mathbb{C}}$ be a holomorphic motion of $E$ over a complex manifold $M$.
	\begin{enumerate}
	\item For each $p\in M$, there exists a neighborhood $U_{p}$ of $p$ such that the restricted holomorphic motion $\phi_{U_{p}} : U_{p}\times E\to \widehat{\mathbb{C}}$ can be extended to a holomorphic motion of $\widehat{\mathbb{C}}$ over $U_{p}$;
  \item if $\textrm{dim }M=1$ and simply connected, then $\phi$ is extended to a holomorphic motion of $\widehat{\mathbb{C}}$ over $M$;
  \item the above statement (2) is not true in general if $dim M>1$; 
for $n>1$ and for any finite subset $E$ of $\widehat{\mathbb{C}}$ consisting of $(n+3)$ points, there exist an $n$-dimensional simply connected complex manifold $M$ and a holomorphic motion $\phi : M\times E\to \widehat{\mathbb{C}}$ such that $\phi$ cannot be extend to a holomorphic motion of $\widehat{\mathbb{C}}$ over $M$.
\end{enumerate}
\end{thm}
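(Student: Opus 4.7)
I would treat the three parts separately, as each requires rather different techniques.

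For Part~(1): Fix $p_0 \in M$ and choose a chart identifying a neighborhood with a polydisk in $\mathbb{C}^{\dim M}$. Slodkowski's theorem applied slicewise along complex disks through $p_0$ gives slicewise extensions to $\widehat{\mathbb{C}}$; I then assemble these into a single holomorphic family on a (possibly smaller) neighborhood of $p_0$ via the parametric form of the measurable Riemann mapping theorem, using the fact that Slodkowski's construction realizes each slicewise extension as the flow of a Beltrami coefficient depending holomorphically on the slice parameter.

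For Part~(2): A simply connected $1$-dimensional complex manifold $M$ is one of $\widehat{\mathbb{C}}$, $\mathbb{C}$, or $\Delta$. When $M = \widehat{\mathbb{C}}$, each $\phi(\cdot, z)$ with $z \in E \setminus \{0, 1, \infty\}$ is a holomorphic map $\widehat{\mathbb{C}} \to \widehat{\mathbb{C}} \setminus \{0, 1, \infty\}$ (by normalization and injectivity), hence constant by the hyperbolicity of the thrice-punctured sphere, so the motion is trivial and extends trivially. When $M = \Delta$ the result is exactly Slodkowski's theorem. When $M = \mathbb{C}$ I exhaust by disks $\Delta_R$, apply Slodkowski on each, and patch via the local extensions from Part~(1); compatibility on overlaps follows from the simple connectedness of $\mathbb{C}$, which eliminates any monodromy obstruction.

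For Part~(3): Given $E$ of cardinality $n + 3$, take $M = T_{0, n+3}$, the Teichm\"uller space of the $(n+3)$-punctured sphere, an $n$-dimensional contractible complex manifold equipped with the tautological holomorphic motion $\phi$ of $E$ coming from the universal family. To show non-extendability, suppose for contradiction that $\phi$ has an extension $\Phi$ to $\widehat{\mathbb{C}}$. For each $w \in \widehat{\mathbb{C}} \setminus E$, the map $t \mapsto (t, \Phi_t(w))$ is a holomorphic section of the forgetful projection $T_{0, n+4} \to T_{0, n+3}$; under the Bers isomorphism $T_{0, n+4} \cong F(T_{0, n+3})$, letting $w$ vary produces a family of sections filling the fibers of the Bers fiber bundle, yielding a holomorphic trivialization and contradicting its known non-triviality for $n \ge 2$.

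The principal obstacle will lie in Part~(3): one must verify that the trivialization built from $\Phi$ is genuinely holomorphic (the map is holomorphic in the base variable $t$ but only quasiconformal in the fiber variable $w$, so the argument is subtler than fiberwise holomorphy) and invoke the correct non-triviality statement for the Bers fiber bundle, e.g.\ via classical results of Hubbard, Nag, or Earle--Kra. By contrast the patching in Parts~(1) and~(2) is routine once Slodkowski's theorem is available.
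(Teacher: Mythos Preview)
The paper does not actually prove Theorem~\ref{thm:old}; it is stated as a summary of known results from \cite{Bers1986a}, \cite{Slodkowski1991}, \cite{Slodkowski1995}, with the Remark immediately following supplying the key ideas. So the comparison is between your outline and those standard arguments.

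Two of your steps contain genuine gaps. In Part~(1), applying Slodkowski ``slicewise'' and then assembling does not work as written: Slodkowski's extension is highly non-unique, so there is no reason the Beltrami coefficients you obtain on different one-dimensional slices should fit together holomorphically in the transverse variables. The standard proof (Bers--Royden) avoids this by using the \emph{canonical} extension whose Beltrami coefficient is harmonic on the complement of $E$; uniqueness of the harmonic representative is exactly what makes the extension depend holomorphically on all parameters, at the cost of shrinking the domain. In Part~(2), your treatment of $M=\mathbb{C}$ by exhausting with disks and ``patching'' fails for the same non-uniqueness reason: the Slodkowski extensions on $\Delta_R$ and $\Delta_{R'}$ need not agree on the smaller disk, and simple connectedness does not repair this. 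The correct observation, which the paper's Remark makes explicit, is that for $M=\mathbb{C}$ (just as for $\widehat{\mathbb{C}}$) each $\phi(\cdot,z)$ is a holomorphic map into the hyperbolic surface $\widehat{\mathbb{C}}\setminus\{0,1,\infty\}$ and hence constant, so the motion is trivial and the extension is immediate.

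For Part~(3) your route via non-triviality of the Bers fiber space $T_{0,n+4}\to T_{0,n+3}$ is valid but different from what the paper indicates. The Remark points instead to the non-existence of holomorphic sections of $P_E:M(\mathbb{C})\to T(E)$ for $\dim T(E)>1$ (cf.\ \cite{Earle2000}): if the universal motion $\Psi$ over $T(E)$ extended to $\widehat{\mathbb{C}}$, taking Beltrami coefficients of the extension would produce exactly such a section. This is more direct and sidesteps the subtlety you yourself flag, namely that your candidate trivialization is only quasiconformal in the fiber variable.
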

\begin{Rem}
\begin{enumerate}
  \item The third statement in Theorem \ref{thm:old} is obtained from a result that there are no holomorphic sections from Teichm\"uller space to the space of Beltrami coefficients if the dimension of the Teichm\"uller space is more than one (see \cite{Earle2000}).
  \item From the uniformization theorem, a simply connected one-dimensional complex manifold $M$ is biholomorphically equivalent to either the unit disk, the complex plane $\mathbb{C}$ or the Riemann sphere $\widehat{\mathbb{C}}$. 
	However, if $M$ is either $\mathbb{C}$ or $\widehat{\mathbb{C}}$, then the holomorphic motion $\phi$ is always {\em trivial}, that is, $\phi(p, z)=z$ for any $(p, z)\in M\times E$.
	More precisely, a simply connected complex manifold $M$ does not admit non-constant bounded holomorphic functions if and only if every holomorphic motion of $E$ over $M$ is trivial (cf.\ \cite{Mitra2010}).
\end{enumerate}
	\end{Rem}
In this paper, we deal with the case where the complex manifold $M$ is one-dimensional, that is, a Riemann surface, but not simply connected.  
While we consider the extension problem for a holomorphic motion of $E$ over a Riemann surface,
a key concept is {\it{the triviality of the monodromy}} defined for a holomorphic motion over a general complex manifold. 
Let us explain the triviality.

Let $\phi : M\times E\to \widehat{\mathbb{C}}$ be a holomorphic motion of a set $E \subset \widehat{\mathbb{C}}$ over a complex manifold $M$ with a basepoint $p_{0}$.
For a finite subset $E'$ of $E$, we consider the restriction $\phi |{E'}$ of the holomorphic motion $\phi$ to $M\times E'$. 
Obviously, $\phi |{E'}$ is also a holomorphic motion of $E'$ over $M$.
From Theorem \ref{thm:old} (1), $\phi |{E'}$ gives a holomorphic fibration of Riemann spheres with $n$ punctures over $M$, where $n=\sharp E'$, the cardinal number of the set $E$.
Thus, for any $\sigma\in \pi_{1}(M, p_0)$, there exists $\rho_{E'}(\sigma)\in \textrm{Mod}(0, n)$ as the monodromy image for $\sigma$ of the fibration, where $\textrm{Mod}(0, n)$ is the set of mapping classes of $n$-punctured sphere. 
Then, we say that the monodromy of the holomorphic motion $\phi$ is \emph{trivial} if $\rho_{E'}(\sigma)=[id]$ for any finite subset $E'$ of $E$ and for any $\sigma\in\pi_{1}(M, p_0)$.

The triviality of the monodromy is described from viewpoint of the theory of braids.
Let $\gamma$ be a closed curve which represents $\sigma\in \pi_1 (M, p_0)$. 
For a parametrization $\gamma : [0, 1]\to M$ of the curve $\gamma$, we may have a braid $\{(t, \phi (\gamma (t), z))\}_{t\in [0, 1], z\in E'}$ of $n$ strands from $E'$.
It is known that the braid gives the same $\rho_{E'}(\sigma)\in \textrm{Mod}(0, n)$ as the monodromy image of the braid for $\sigma$  (cf. \cite{Kassel2008}).
Then, we may say that the monodromy of $\phi$ is trivial if and only if the monodromy of the braid is trivial for any finite subset $E'$ of $E$ and for any $\sigma\in \pi_1 (M, p_0)$.

We consider another kind of monodromy for a holomorphic motion $\phi : M\times E\to \widehat{\mathbb{C}}$ which is called the \emph{trace monodromy}. 
Take distinct four points $z_1, z_2, z_3, z_4$ and put $\mathbf{q}:= \{z_1, z_2, z_3, z_4\}$. 
Then, 
\begin{equation}
	\label{eqn:tracemono}
	\phi^{\mathbf{q}}(\cdot, z_j):=\phi (\cdot, z_j), \quad (j=1, 2, 3, 4)
\end{equation} 
defines a holomorphic motion of the set $\mathbf{q}$ over $X$. We say that the trace monodromy of $\phi$ is trivial if the monodromy of $\phi^{\mathbf{q}}$ is trivial for any $\mathbf{q}$ of distinct four points of $E$.

	The above definition of the trace monodromy has essentially the same idea in common with the definition given in \cite{BJMS2012}, while the condition of the definition here is stronger.
	
	It is not hard to see that if a holomorphic motion $\phi$ of $E$ is extended to a holomorphic motion of $\widehat{\mathbb{C}}$, then both the monodromy and the trace monodromy of $\phi$ are trivial. In our previous paper \cite{BJMS2012}, we posed a question that whether the converse is true or not. 
	We give the affirmative answer if $E$ consists of four points (\cite{BJMS2012}, Theorem B). 
	The first theorem of our paper gives the affirmative answer for the monodromy.
\begin{Mythm}
\label{thm:mythm1}
	Let $\phi : X\times E\to\widehat{\mathbb{C}}$ be a holomorphic motion of $E$ over a Riemann surface $X$. 
	Then, $\phi$ can be extended to a holomorphic motion of $\widehat{\mathbb{C}}$ over $X$ if and only if the monodromy of $\phi$ is trivial.
\end{Mythm}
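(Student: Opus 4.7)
The necessity is immediate: given an extension $\Phi:X\times\widehat{\mathbb{C}}\to\widehat{\mathbb{C}}$ of $\phi$ and any loop $\gamma$ in $X$ based at $p_{0}$, the continuous path $t\mapsto \Phi_{\gamma(t)}$ of self-homeomorphisms of $\widehat{\mathbb{C}}$ begins and ends at the identity, and its restriction to any finite $E'\subset E$ realizes $\rho_{E'}([\gamma])$ as the trivial mapping class via an ambient isotopy. I therefore focus on the nontrivial direction.

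By the remark following Theorem \ref{thm:old} we may assume $X$ is hyperbolic; otherwise a Liouville-type argument already forces $\phi$ to be trivial. Let $\pi:\Delta\to X$ be a universal covering with deck group $\Gamma\cong\pi_{1}(X,p_{0})$, and pull $\phi$ back to a holomorphic motion $\widetilde{\phi}(\tilde{p},z):=\phi(\pi(\tilde{p}),z)$ over $\Delta$. Applying Theorem \ref{thm:old}(2) produces an extension $\widetilde{\Phi}:\Delta\times\widehat{\mathbb{C}}\to\widehat{\mathbb{C}}$. Since $\widetilde{\phi}$ is automatically $\Gamma$-invariant on $\Delta\times E$, it will suffice to prove that $\widetilde{\Phi}$ can be chosen $\Gamma$-invariant, for then it descends to the desired extension $\Phi:X\times\widehat{\mathbb{C}}\to\widehat{\mathbb{C}}$ of $\phi$.

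The $\Gamma$-invariance would be engineered via Teichm\"uller spaces of punctured spheres. For a finite $E'\subset E$ with $n:=\sharp E'\geq 4$, the restricted motion $\phi|_{X\times E'}$ defines a holomorphic map $f_{E'}:X\to M_{0,n}$ into the moduli space of $n$-punctured spheres, and triviality of $\rho_{E'}$ is exactly the vanishing of the obstruction to lifting $f_{E'}$ to a holomorphic map $\tilde{f}_{E'}:X\to T(0,n)$ into Teichm\"uller space. The Bers fiber space over $T(0,n)$ carries a tautological holomorphic motion of $\widehat{\mathbb{C}}$ extending the marked $n$ punctures, and its pullback by $\tilde{f}_{E'}$ yields a normalized holomorphic motion $\Phi_{E'}:X\times\widehat{\mathbb{C}}\to\widehat{\mathbb{C}}$ that extends $\phi|_{E'}$. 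Choosing an increasing sequence $E_{1}\subset E_{2}\subset\cdots\subset E$ whose union is dense in $E$ and invoking the Bers--Royden dilatation estimate to see that $\{\Phi_{E_{n}}\}$ is a normal family of uniformly quasiconformal maps on compact subsets of $X$, I would extract a limit $\Phi:X\times\widehat{\mathbb{C}}\to\widehat{\mathbb{C}}$, which by construction extends $\phi$.

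The main obstacle I anticipate is enforcing coherence of the sequence $\{\Phi_{E_{n}}\}$ so that a subsequential limit both exists and agrees with $\phi$ on \emph{all} of $E$, not merely on the dense subset $\bigcup_{n}E_{n}$. The lift $\tilde{f}_{E_{n}}$ is canonical once the monodromy is trivial, but the choice of section of the Bers fiber space is not, so compatibility as $n$ grows requires a uniform anchoring. My proposed remedy is to tie every $\Phi_{E_{n}}$ to the single Slodkowski extension $\widetilde{\Phi}$ on $\Delta$ fixed at the outset and to apply the trivial-monodromy hypothesis simultaneously to each $E_{n}$; this should force $\widetilde{\Phi}$ itself to descend modulo isotopies rel.\ $\widetilde{\phi}(\cdot,E_{n})$, producing the coherent family required for the limit and completing the descent. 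The four-point case (Theorem B of \cite{BJMS2012}) will serve as the base step ensuring that the Teichm\"uller-theoretic lift already captures the full extension on each quadruple, which is the minimal input needed for the Bers fiber space machinery.
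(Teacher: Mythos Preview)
Your reduction to finite $E'$ and your use of the trivial monodromy to lift $f_{E'}:X\to M_{0,n}$ to a holomorphic $\tilde f_{E'}:X\to T(0,n)$ are both correct, and the paper makes the analogous reduction (its Lemma~\ref{lemma:finiteE}). The gap is in the next sentence. You assert that the Bers fiber space over $T(0,n)$ carries a tautological holomorphic motion of all of $\widehat{\mathbb C}$ extending the $n$ marked points. That object does not exist once $n\ge 5$. What exists over $T(0,n)=T(E')$ is the universal holomorphic motion $\Psi$ of the \emph{finite set} $E'$ (Proposition~\ref{Pro:universal}); extending $\Psi$ to a holomorphic motion of $\widehat{\mathbb C}$ over $T(0,n)$ is exactly the same as producing a holomorphic section of $P_{E'}:M(\mathbb C)\to T(E')$, and Theorem~\ref{thm:old}(3) with the remark following it records that no such section exists when $\dim T(E')>1$. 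In other words, the step ``pull back the tautological extension along $\tilde f_{E'}$ to obtain $\Phi_{E'}:X\times\widehat{\mathbb C}\to\widehat{\mathbb C}$'' is precisely the statement you are trying to prove, specialized to the particular Riemann surface $X$, and your proposed mechanism for it is circular. The four-point case you cite works only because $T(0,4)\cong\Delta$ is one-dimensional, so Slodkowski applies directly; it does not bootstrap.

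Your remedy paragraph does not close the gap either. Tying the $\Phi_{E_n}$ to a fixed Slodkowski extension $\widetilde\Phi$ on $\Delta$ and asking $\widetilde\Phi$ to ``descend modulo isotopies rel $\widetilde\phi(\cdot,E_n)$'' produces at best a continuous (quasiconformal) motion over $X$, not a holomorphic one: the isotopy correction destroys holomorphic dependence on $p$. The paper's proof avoids this obstruction altogether. It cuts $X$ along analytic arcs to a simply connected $X'$, uses the trivial monodromy only to build a real-analytic family of quasiconformal maps (via the Douady--Earle section, Proposition~\ref{Pro:DEsection}) giving ``radial structures'' on $\partial X'$, and then runs Slodkowski's Banach-space ODE directly on $X'$ to manufacture the holomorphic extension; the matching of radial structures on the two sides of each cut (Lemma~\ref{lemma:RadialStructure}(6)) is what makes the result glue back to a holomorphic motion over $X$. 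The hard analysis is in producing holomorphicity from scratch on the cut surface, not in descending a pre-existing holomorphic object from $T(0,n)$.
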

\begin{Rem}
	In \cite{BJMS2012} Theorem D, we have proved the above theorem for a holomorphic motion over a Riemann surface which is a complement of a certain small compact set in the unit disk. Hence, Theorem \ref{thm:mythm1} is a generalization of the theorem.
\end{Rem}
	Chirka \cite{Chirka2004} considers a condition similar to the triviality of the monodromy and the trace monodromy.

Let $\phi : X\times E\to \widehat{\mathbb{C}}$ be a holomorphic motion of $E$ over a Riemann surface $X$. 
Take distinct points $z_1, z_2$ in $E$ and a smooth closed curve $\alpha$ in $X$; we consider the variation of argument of $\phi (\cdot, z_1)-\phi (\cdot, z_2)$ along $\alpha$, that is,
\begin{equation*}
	n(\alpha ; z_1, z_2)=\int_{\alpha}d \textrm{arg}(\phi (\cdot, z_1)-\phi (\cdot, z_2)).
\end{equation*}
Since $\alpha$ is a closed curve, $n(\alpha ; z_1, z_2)$ is in $2\pi \mathbb{Z}$. 

 In \cite{Chirka2004}, he claimed that
if $n(\alpha ; z, z')=0$ for any distinct points $z, z'$ of $E$ and for any smooth closed curve $\alpha$ in $X$,
then $\phi$ can be extended to a holomorphic motion of $\widehat{\mathbb{C}}$ over $X$.
Unfortunately, the claim is not true; we may construct a counter example to this claim.
We also give another example including a negative answer to the above question for the trace monodromy.

\begin{Mythm}
	\label{thm:mythmTrace}
	Let $E=\{0, 1, \infty, z_0, \dots , z_n\}$ be a finite subset of $\widehat{\mathbb{C}}$ $(n\geq 0)$.
	\begin{enumerate}
  \item There exist a Riemann surface $X$ and a holomorphic motion $\phi : X\times E\to \widehat{\mathbb{C}}$ such that $n(\alpha; z, z')=0$ for any closed curve $\alpha$ in $X$ and for any distinct points $z, z'$ in E but the holomorphic motion $\phi$ cannot be extended to a holomorphic motion of $\widehat{\mathbb{C}}$ over $X$;
  \item there exist a Riemann surface $X$ and a holomorphic motion $\phi : X\times E\to \widehat{\mathbb{C}}$  such that for any proper subset $E'$ of $E$, the restriction $\phi |{E'}$ of $\phi$ to $X\times E'$ can be extended to a holomorphic motion of $\widehat{\mathbb{C}}$ over $X$ but
the holomorphic motion $\phi$ cannot be extended to a holomorphic motion of $\widehat{\mathbb{C}}$ over $X$.
\end{enumerate}

\end{Mythm}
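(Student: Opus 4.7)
The strategy for both parts is to apply Theorem I, which identifies extendability of a holomorphic motion with triviality of its monodromy in the pure mapping class group $\mathrm{Mod}(0,|E|)$. Thus (1) calls for a motion whose monodromy lies in the kernel of the pairwise winding-number map but remains nontrivial in $\mathrm{Mod}(0,|E|)$, while (2) calls for one whose monodromy becomes trivial on every proper subset of $E$ yet is nontrivial on $E$ itself.

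For part (1), I would keep $0,1,\infty,z_1,\dots,z_n$ fixed and let only $z_0$ move, so that the motion reduces to a single holomorphic map $f:X\to Y$ with $Y=\widehat{\mathbb{C}}\setminus\{0,1,\infty,z_1,\dots,z_n\}$. Each pairwise winding number $n(\alpha;z_0,z_i)$ is then exactly the component of $f_*([\alpha])$ in the abelianization $H_1(Y;\mathbb{Z})\cong\mathbb{Z}^{|E|-2}$. I would take $X\to Y$ to be the unbranched holomorphic cover corresponding to the commutator subgroup $[\pi_1(Y),\pi_1(Y)]$, with $f$ the covering map. Then every pairwise winding number vanishes identically by construction, while by the Birman exact sequence the point-pushing subgroup $\pi_1(Y)\hookrightarrow\mathrm{Mod}(0,|E|)$ is injective, so any nontrivial element of $\pi_1(X)\cong[\pi_1(Y),\pi_1(Y)]$ gives nontrivial monodromy and Theorem I forbids extension.

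For part (2), the case $n=0$ is nearly immediate: any holomorphic motion of at most three points can be extended to $\widehat{\mathbb{C}}$ by time-dependent M\"obius interpolation, so it suffices to exhibit any non-extendable four-point motion (e.g., the one just produced in (1)). For $n\geq 1$ I would again let only $z_0$ move; the requirement that the restriction to $E\setminus\{p\}$ extend translates via Theorem I to $f_*([\alpha])$ lying in the kernel of the puncture-filling map $\pi_1(Y)\to\pi_1(Y\cup\{p\})$, that is, in the normal closure of the meridian around $p$. We therefore seek a nontrivial element in the intersection of these normal closures across all fixed punctures---a \emph{Brunnian} element of $\pi_1(Y)$, in analogy with Borromean rings. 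Such Brunnian elements exist for every punctured sphere with at least three punctures; a simple instance when $|Y|=3$ is the commutator $[a,b]$ of two meridians, which is killed upon filling any of the three punctures. Taking $X\to Y$ to be the cover corresponding to the Brunnian subgroup yields the desired motion.

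The principal obstacle is to guarantee that these algebraically prescribed loops give rise to genuine holomorphic motions whose monodromy does not collapse in the mapping class group. The first condition is handled automatically by the covering construction, since unramified covers between Riemann surfaces are holomorphic. The second reduces to the Birman injection in each case; for (2) one must verify additionally, using the naturality of the Birman sequence, that the chosen Brunnian loop becomes the trivial mapping class in $\mathrm{Mod}(0,|E|-1)$ after filling any one puncture, while remaining nontrivial in $\mathrm{Mod}(0,|E|)$. A more explicit alternative to the covering construction is to write the motion directly on a suitable annulus or punctured disk via a concrete holomorphic function whose monodromy can be computed word-by-word in the standard generators of $\pi_1(Y)$.
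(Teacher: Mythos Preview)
Your strategy is correct and matches the paper's: move only $z_0$ via a holomorphic map $f:X\to Y=\widehat{\mathbb C}\setminus\{0,1,\infty,z_1,\dots,z_n\}$, identify the monodromy with point-pushing (the paper packages this as Lemma~\ref{lemma:Kra}, which is exactly your Birman injection), and then for (1) choose $f_*(\pi_1(X))$ inside the commutator subgroup so all pairwise winding numbers vanish, while for (2) choose it inside the Brunnian locus so filling any single puncture kills the word.

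The execution differs in one respect worth noting. You take $X$ to be the full commutator cover (resp.\ the cover associated to the Brunnian subgroup), which produces an infinite-type Riemann surface. The paper instead picks a \emph{single} element $\gamma$ in the relevant subgroup, forms the cyclic cover $A=\Delta/\langle g\rangle$ (an annulus), and sets $f=\pi_g$. For (1) the word is simply $\gamma=[\gamma_1,\gamma_0]$; for (2) with $n\ge 1$ the paper writes down an explicit Brunnian word $\gamma=[\beta,\tilde\gamma_{n+1}]$ with $\beta=\gamma_0\cdots\gamma_{n+1}$ and $\tilde\gamma_j=[\gamma_j,\tilde\gamma_{j-1}]$, then checks by hand both that it is nontrivial in $F_{n+2}$ and that deleting any generator (or $\beta$, which handles the puncture at $\infty$) trivializes it. Your final sentence anticipates exactly this ``annulus plus explicit word'' variant, so the two proofs are really the same idea at different levels of concreteness; the paper's version has the advantage of giving a finite-type $X$ and making the nontriviality verification elementary, while yours is cleaner conceptually but leaves the existence of Brunnian elements (and the handling of the $\infty$-puncture) as an appeal to the literature.
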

\begin{Rem}
Let $E'$ be any subset of $E$ consisting of four points. Then, for the holomorphic motion $\phi$ and the Riemann surface $X$ in the second statement, the monodromy of $\phi |{E'}$, which is the trace monodromy for $E'$, is trivial. 
Therefore, the holomorphic motion $\phi$ gives a negative answer to the question in \cite{BJMS2012} for the trace monodromy.
%\begin{enumerate}
%	\item
%	\item If the set $E$ itself consists of four points, namely $n=0$, then  we have already proved that a holomorphic motion of $E$ with trivial trace monodromy is always extended to a holomorphic motion of $\widehat{\mathbb{C}}$ (cf.~\cite{BJMS2012} Theorem B).
%	It is also a special case of Theorem \ref{thm:mythm1}; 
%	the second statement is not true for $n=0$.
%\end{enumerate}
\end{Rem}
We define the group equivariance of holomorphic motions.
\begin{Def}
\label{dfn:G-equivariance}
	 Let $G$ be a subgroup of $\textrm{M\"ob}(\mathbb{C})$, the set of M\"obius transformations. 
Suppose that a closed set $E$ of $\widehat{\mathbb{C}}$ is $G$-invariant, that is, $GE=E$. 
A holomorphic motion $\phi : X\times E \to \widehat{\mathbb{C}}$ is called $G$-\emph{equivariant} if for each $p\in X$ there exists an isomorphism $\theta_{p}$ from $G$ to $\textrm{M\"ob}(\mathbb{C})$ such that 
\begin{equation}
	\label{eqn:equivariance}
	\phi (p, g(z))=\theta_{p}(g)(\phi (p, z))
\end{equation}
holds for any $(p, z, g)\in X\times E\times G$.
\end{Def}
We may also show an equivariant version of Theorem \ref{thm:mythm1}.
\begin{Mythm}
\label{thm:mythm2}
	Let $\phi : X\times E\to \widehat{\mathbb{C}}$ be a $G$-equivariant holomorphic motion of $E$ over a Riemann surface $X$. 
	Then there exists a $G$-equivariant holomorphic motion of $\widehat{\mathbb{C}}$ over $X$ if and only if the monodromy of $\phi$ is trivial.
	
\end{Mythm}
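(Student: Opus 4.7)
The necessity is immediate from Theorem~\ref{thm:mythm1}: a $G$-equivariant extension to $\widehat{\mathbb{C}}$ over $X$ is in particular an extension, so the monodromy of $\phi$ must be trivial.

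For sufficiency, my plan is to mirror the proof of Theorem~\ref{thm:mythm1}, carrying the $G$-action throughout and using the equivariant extension theorem of Earle-Kra-Krushkal in place of Slodkowski. Let $\pi : \tilde X \to X$ be the universal cover, with deck group $\Gamma \cong \pi_{1}(X, p_{0})$, and lift $\phi$ to
\[
  \tilde\phi(t, z) := \phi(\pi(t), z), \qquad \tilde\phi : \tilde X \times E \to \widehat{\mathbb{C}}.
\]
Then $\tilde\phi$ is a $G$-equivariant holomorphic motion over a simply connected Riemann surface. The non-hyperbolic cases yield only trivial motions (by the remark after Theorem~\ref{thm:old}), so I may assume $\tilde X = \Delta$. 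Applying Earle-Kra-Krushkal to $\tilde\phi$ produces a $G$-equivariant holomorphic motion $\tilde\Phi : \Delta \times \widehat{\mathbb{C}} \to \widehat{\mathbb{C}}$ extending $\tilde\phi$.

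It remains to verify that $\tilde\Phi$ descends to $X$; equivalently, that $\tilde\Phi(\gamma t, z) = \tilde\Phi(t, z)$ for every $\gamma \in \Gamma$ and every $(t, z) \in \Delta \times \widehat{\mathbb{C}}$. On $z \in E$ this is automatic because $\tilde\phi$ is manifestly $\Gamma$-invariant in the first variable. For $z \notin E$, I would import the descent argument from the proof of Theorem~\ref{thm:mythm1}: the triviality of the monodromy of $\phi$ on every finite subset $E' \subset E$ says that the braid obtained by following $\gamma$ through $\phi(\cdot, z_{1}), \dots, \phi(\cdot, z_{\#E'})$ is trivial in $\mathrm{Mod}(0, \#E')$, and this is precisely the condition that forces the deck-translated motion $\tilde\Phi(\gamma \cdot, \cdot)$ to coincide with $\tilde\Phi(\cdot, \cdot)$ over the whole sphere.

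The principal obstacle is compatibility: one must arrange that the sphere-filling step used in Theorem~\ref{thm:mythm1} can be replaced by the Earle-Kra-Krushkal extension while still permitting the descent argument. Provided the proof of Theorem~\ref{thm:mythm1} is organized via a canonical construction that commutes with M\"obius conjugation (for instance, a Douady-Earle-type barycentric extension, which is already the tool underlying Earle-Kra-Krushkal), the resulting $\tilde\Phi$ is simultaneously $\Gamma$-compatible and $G$-equivariant, and Theorem~\ref{thm:mythm2} follows.
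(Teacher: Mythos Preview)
Your descent step has a genuine gap. Applying Earle--Kra--Krushkal to $\tilde\phi$ over $\Delta$ gives \emph{some} $G$-equivariant extension $\tilde\Phi$, but holomorphic extensions of a motion of $E$ to a motion of $\widehat{\mathbb{C}}$ are far from unique, and the Earle--Kra--Krushkal construction is not canonical in the $\Delta$-variable. Trivial monodromy tells you only that for each deck transformation $\gamma$ the self-map $\tilde\Phi(\gamma t,\cdot)\circ\tilde\Phi(t,\cdot)^{-1}$ is isotopic to the identity \emph{rel} $E$; it does \emph{not} force that map to equal the identity on $\widehat{\mathbb{C}}\setminus E$. So $\tilde\Phi(\gamma t,z)=\tilde\Phi(t,z)$ can fail for $z\notin E$, and $\tilde\Phi$ need not descend to $X$. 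Your final paragraph correctly identifies this as the crux, but the hoped-for ``canonical construction'' is not available: the Douady--Earle section \emph{is} conformally natural and does descend (this is exactly Proposition~\ref{Pro:tameQC}), but it produces only a \emph{quasiconformal} motion, not a holomorphic one. The analytic machinery in \S6 that upgrades this to a holomorphic motion (radial structures, the ODE~(\ref{eqn:Differential eqn}), matching across cuts) is not M\"obius-natural, so there is no way to make your descent argument work as stated.

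The paper sidesteps this entirely by reversing the order of operations. It first applies Theorem~\ref{thm:mythm1} \emph{over $X$} (not over $\Delta$) to extend $\phi$ to a holomorphic motion of $\widehat{\mathbb{C}}$ over $X$---this extension need not be $G$-equivariant. Restricting to $E\cup\{z_0\}$ for a single $z_0\in E^c$ gives a holomorphic motion $\phi_0$; one then \emph{defines} $\widehat\phi_0$ on the orbit $G(z_0)$ by $\widehat\phi_0(p,g(z_0)):=\theta_p(g)\bigl(\phi_0(p,z_0)\bigr)$ and checks directly that this is a $G$-equivariant holomorphic motion of $E\cup G(z_0)$ over $X$. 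Iterating over a sequence of orbits whose union is dense and closing up with the $\lambda$-lemma gives the $G$-equivariant extension to $\widehat{\mathbb{C}}$. Thus Theorem~\ref{thm:mythm1} is used as a black box over $X$, and the $G$-equivariance is built in afterwards, orbit by orbit, exactly as in Earle--Kra--Krushkal; no descent from the universal cover is needed.
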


\section{Fundamental Notions}

\subsection{Teichm\"uller space and reduced Teichm\"uller space}
\label{subsection:Teich}
First of all, we recall the definition of Teichm\"uller space $Teich(S)$ of a hyperbolic Riemann surface $S$ (see \cite{Imayosh1992} for further details). 

Let $\Gamma$ be a Fuchsian group acting on the unit disk $\Delta$ which represents $S$. 
A quasiconformal self-map $\varphi$ of $\Delta$ is called $\Gamma$-{\em compatible} if $\varphi\circ \gamma\circ\varphi^{-1}$ is a M\"obius transformation for every $\gamma\in \Gamma$. 
Two $\Gamma$-compatible quasiconformal maps $\varphi_{1}, \varphi_{2}$ are said to be equivalent if there exists a M\"obius transformation $g$ from $\Delta$ to itself such that $g\circ\varphi_{1}=\varphi_{2}$ on $\partial\Delta$.
The Teichm\"uller space $Teich(S)$ of $S$ is defined as the set of equivalence classes $[\varphi]_{\Gamma}$ of $\Gamma$-compatible quasiconformal maps $\varphi$.

The Teichm\"uller space $Teich(S)$ is understood in terms of Beltrami coefficients.
For a measurable set $U\subset\mathbb{C} $ we denote by $M(U)$ the space of Beltrami coefficients on $U$, that is, the space of bounded measurable functions $\mu$ on $U$ with $||\mu||_{\infty}<1$. 
For each $\mu\in M(\Delta)$, we have a quasiconformal self-map $\varphi^{\mu}$ of $\Delta$ satisfying the Beltrami equation:
\begin{equation*}
	\overline \partial \varphi^{\mu}= \mu \partial \varphi^{\mu} 
\end{equation*}
almost everywhere in $\Delta$; $\mu$ is called the Beltrami coefficient of $\varphi^{\mu}$. Then $\varphi^{\mu}$ is $\Gamma$-compatible if and only if
\begin{equation}
\label{eq:compatibleBelt}
	\mu (\gamma (z))\frac{\overline{\gamma '(z)}}{\gamma '(z)}=\mu (z) \quad (a.e.)
\end{equation}
holds for every $\gamma\in \Gamma$ and for almost all $z$ in $\Delta$. 
We denote by $\textrm{Belt}(\Gamma)$ the set of Beltrami coefficients $\mu$ satisfying (\ref{eq:compatibleBelt}). 
We denote by $\pi_{\Gamma}$ the natural projection from $\textrm{Belt}(\Gamma)$ to $Teich(S)$ defined by $[{\mu}]_{\Gamma}$ for $\mu\in\textrm{Belt}(\Gamma)$.
It is known that the projection is surjective.
Therefore, the Teichm\"uller space $Teich(S)$ agrees with $\pi_{\Gamma}(\textrm{Belt}(\Gamma))$.

\medskip

A hyperbolic Riemann surface $S$ is called of type $(g, n)$ if it is a Riemann surface of genus $g$ with $n$ punctures. Then, the Teichm\"uller space $Teich(S)$ has another description.

Let $S'$ be a Riemann surface of the same type as $S$ and $f : S\to S'$ be a quasiconformal homeomorphism from $S$ to $S'$. 
We consider the pair $(S', f)$.
We say that two such pairs $(S_i, f_i)$ $(i=1, 2)$ are Teichm\"uller equivalent if there exists a conformal map $h : S_1\to S_2$ such that $h$ is homotopic to $f_2\circ f_1^{-1}$.
The Teichm\"uller equivalence class of $(S', f)$ is denoted by $[S', f]$.
The Teichm\"uller space $Teich(S)$ of $S$ is regarded as the set of all Teichm\"uller equivalence classes.
It is known that the Teichm\"uller space $Teich(S)$ admits a natural complex structure of dimension $3g-3+n$ if $S$ is of type $(g, n)$.

The Teichm\"uller space $Teich(S)$ is equipped with a complete distance called the Teichm\"uller distance $d_{T}$, which is defined by
\begin{equation*}
	d_{T}([S_1, f_1], [S_2, f_2])=\frac{1}{2}\inf_{g}\log K(g),
\end{equation*}
where the infimum is taken over all quasiconformal maps $g : S_1\to S_2$ homotopic to $f_2\circ f_1^{-1}$ and $K(g)$ is the maximal dilatation of $g$.
It is known that the Teichm\"uller distance $d_T$ is equal to the Kobayashi distance of the complex manifold $Teich(S)$.

The mapping class group $\textrm{Mod}(g, n)$ is the group of homotopy classes of all quasiconformal self-maps $\omega$ of $S$.
Let $\chi [\omega]$ be an element of $\textrm{Mod}(g, n)$ represented by $\omega : S\to S$.
Then, it acts on $Teich(S)$ by
\begin{equation*}
	\chi [\omega]([S', f])=[S', f\circ \omega^{-1}] \quad ([S', f]\in Teich(S)).
\end{equation*} 
It is not hard to see that the action is well-defined and every $\chi [\omega]$ is an isometry with respect to the Teichm\"uller distance.
In fact, the mapping class group $\textrm{Mod}(g, n)$ is the group of biholomorphic automorphisms of $Teich(S)$.
\medskip

{\textbf{Reduced Teichm\"uller space.}}

A Riemann surface $S$ is called of type $(g; n, m)$ if it is of genus $g$ with $n$ punctures and $m$ boundaries. 
We assume that the surface is hyperbolic so that its universal covering is conformally equivalent to the unit disk $\Delta$.
We may also assume that every boundary component is a smooth Jordan curve.

Let $S_0$ be a Riemann surface of type $(g; n, m)$ with $m\geq 1$ and $\Gamma_0$ be a Fuchsian group acting on $\Delta$ which represents $S_0$.
Then the Fuchsian group $\Gamma_0$ is of the second kind, that is, the limit set $\Lambda (\Gamma_0)$ is a Cantor set on $\partial\Delta$.
We say that two $\Gamma_0$-compatible quasiconformal maps $\varphi_1, \varphi_2$ are {\em R-equivalent} if there exists a M\"obius transformation $g$ from $\Delta$ to itself such that $g\circ\varphi_1=\varphi_2$ on $\Lambda(\Gamma_0)$.
The reduced Teichm\"uller space $Teich^{\#}(S_0)$ of $S_0$ is defined as the set of all R-equivalence classes $[\varphi]_{\Gamma_0}^{\#}$ of $\Gamma_0$-compatible quasiconformal self-maps $\varphi$ of $\Delta$.

For $[\varphi]_{\Gamma_0}^{\#}\in Teich^{\#}(S_0)$, the quasiconformal self-map $\varphi$ of $\Delta$ can be symmetrically extended to a self-quasiconformal map $\widehat{\varphi}$ of the Riemann sphere $\widehat{\mathbb{C}}$.
We see that $\Gamma_{\varphi}:=\widehat{\varphi}\Gamma_0\widehat{\varphi}^{-1}$ is also a Fuchsian group of the second kind.
It acts properly discontinuously on $\widehat{\mathbb{C}}\setminus\Lambda (\Gamma_{\varphi})$.
Hence, it gives a quasiconformal map from $\left (\widehat{\mathbb{C}}\setminus\Lambda(\Gamma_0)\right )/\Gamma_0$ to
$\left ( \widehat{\mathbb{C}}\setminus\Lambda(\Gamma_{\varphi})/\Gamma_{\varphi}\right )$. 
Since $\widehat{S}_0:= \left (\widehat{\mathbb{C}}\setminus\Lambda(\Gamma_0)\right )/\Gamma_0$ and $\widehat{S}_{\varphi}:=\left ( \widehat{\mathbb{C}}\setminus\Lambda(\Gamma_{\varphi})/\Gamma_{\varphi}\right )$ 
are doubles of $S_0$ and $S_{\varphi}:=\Delta/\Gamma_{\varphi}$ respectively, it defines an element $\mu_{\varphi}$ of $\textrm{Belt}(\widehat{\Gamma}_0)$, where $\widehat{\Gamma}_0$ is a Fuchsian group representing $\widehat{S}_0$.
It is known that if $\varphi$ and $\psi$ are R-equivalent, then $\mu_{\varphi}$ and $\mu_{\psi}$ gives the same point of $Teich(\widehat{S}_0)$.
Thus, we have a map $\widehat{\Pi}$ from $Teich^{\#}(S_0)$ to $Teich(\widehat{S}_0)$. 
In fact, the map is real analytic.

\medskip

{\bf Teichm\"uller space of a closed set.}

Let $E$ be a closed set in $\widehat{\mathbb{C}}$ containing $0, 1, \infty$. 
It is well known that for each $\mu\in M(\mathbb{C})$, there exists a unique quasiconformal self map $w^{\mu}$ of $\widehat{\mathbb{C}}$ such that it fixes $0, 1, \infty$ and satisfies the Beltrami equation
$$
\overline\partial w^{\mu}=\mu  \partial w^{\mu}
$$
almost everywhere in $\mathbb{C}$.
The map $w^{\mu}$ is called the \emph{normalized quasiconformal map} for $\mu$.

Two Beltrami coefficients $\mu, \nu \in M(\mathbb{C})$ are said to be $E$-\emph{equivalent} if $(w^{\mu})^{-1}\circ w^{\nu}$ are homotopic to the identity $\textrm{rel }E$.
We denote by $[\mu]_{E}$ the equivalence class of $\mu$.
The Teichm\"uller space $T(E)$ of the closed set $E$ is the set of all equivalence classes $[\mu]_{E}$ of $\mu\in M(\mathbb{C})$.
Obviously, $T(E)=Teich(\widehat{\mathbb{C}}\setminus E)$ if $E$ is a finite set.
We denote by $P_{E}$ the quotient map of $M(\mathbb{C})$ onto $T(E)$.

Since $E$ is a closed set of $\widehat{\mathbb{C}}$, the complement $E^c$ of $E$ is a disjoint union of domains $X_{i}$ $(i\in \mathbb{N})$ on $\widehat{\mathbb{C}}$. 
Each $X_i$ is a hyperbolic Riemann surface; we may define Teichm\"uller space $T(E^c)$ as the product Teichm\"uller space $\amalg_{i\in \mathbb{N}}T(X_{i})$ with product metric.
It is the set of all $(p_i)_{i\in \mathbb{N}}$ $(p_i\in T(X_i))$ such that the Teichm\"uller distances between the basepoint $X_i$ and $p_i$ is less than a constant independent of $i\in \mathbb{N}$.

A Beltrami coefficient on $X_i$ gives a point in $T(X_i)$; it is computed that the Teichm\"uller distance between two points determined by 0 and $\mu\in M(X_i)$ is not greater than $\frac{1}{2} \log (1+\|\mu\|_{\infty})(1-\|\mu\|_{\infty})^{-1}$. 
Hence, considering the restriction $\mu|E^c$ for $\mu\in M(\mathbb{C})$, we have a map $P_T$ from $M(\mathbb{C})$ to $T(E^c)$ which sends $\mu\in  M(\mathbb C)$ to the product of $p_i\in T(X_i)$ defined by $\mu |X_j$ $(i\in \mathbb N)$.
We define a map $\tilde{P}_E$ from $M(\mathbb{C})$ to $T(E^c)\times M(E)$ by
\begin{equation}
	\tilde{P}_{E}(\mu)=(P_T(\mu), \mu|E).
\end{equation}
It is easy to see that $\tilde{P}_{E}$ is surjective. 
Moreover, it is known that the map is a holomorphic split submersion.
\begin{Pro}[Lieb's isomorphism theorem. cf.~\cite{Earle2000}]
\label{Pro:Lieb}
	There exists a well-defined bijective map $\theta : T(E)\to T(E^c)\times M(E)$ such that $\theta\circ P_{E}=\tilde{P}_{E}$. 
	The Teichm\"uller space $T(E)$ admits a unique complex structure so that $P_E$ is a holomorphic split submersion and $\theta$ is biholomorphic.
\end{Pro}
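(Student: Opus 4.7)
The plan is to construct $\theta$ by descending $\tilde P_E$ to the quotient $T(E)$, verify it is a bijection, and transport the complex structure from $T(E^c)\times M(E)$. I set $\theta([\mu]_E):=\tilde P_E(\mu)$; well-definedness and injectivity then amount to the single assertion that two Beltrami coefficients $\mu,\nu\in M(\mathbb{C})$ are $E$-equivalent if and only if $\mu|E=\nu|E$ in $M(E)$ and $\mu|X_i,\nu|X_i$ determine the same point of $T(X_i)$ for every component $X_i$ of $E^c$. Surjectivity of $\theta$ is then immediate from the surjectivity of $\tilde P_E$ noted in the excerpt.

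For the forward direction, let $f:=(w^\mu)^{-1}\circ w^\nu$, which by hypothesis is isotopic to the identity rel $E$. Because $f$ fixes $E$ pointwise and is quasiconformal, at almost every Lebesgue density point $z_0\in E$ the real differential $Df(z_0)$ exists, and the density-one property of $E$ at $z_0$ forces $Df(z_0)=\mathrm{id}$; consequently the Beltrami coefficient of $f$ vanishes almost everywhere on $E$, and the composition law applied to $w^\nu=w^\mu\circ f$ at a.e.\ $z\in E$ reduces to $\nu(z)=\mu(z)$. Simultaneously, $f$ preserves each component $X_i$ (by the isotopy), the restriction $f|X_i$ is isotopic to the identity rel $\partial X_i$, and hence $w^\mu(X_i)=w^\nu(X_i)$ with $w^\nu\circ (w^\mu)^{-1}|w^\mu(X_i)=w^\mu\circ f\circ (w^\mu)^{-1}|w^\mu(X_i)$ isotopic to the identity; taking the conformal identification to be the identity yields the Teichmüller equivalence in $T(X_i)$. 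For the converse, the conformal maps $h_i:w^\mu(X_i)\to w^\nu(X_i)$ furnished by the equalities in $T(X_i)$, together with the pointwise agreement $\mu|E=\nu|E$, assemble via a gluing and isotopy extension argument into an isotopy from $(w^\mu)^{-1}\circ w^\nu$ to the identity rel $E$.

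For the complex structure, $T(E^c)\times M(E)$ carries a natural product complex structure from the complex structures on each $T(X_i)$ (in the bounded product sense defined in the excerpt) and the open unit ball $M(E)\subset L^\infty(E)$; pulling it back through $\theta$ defines the unique complex structure on $T(E)$ making $\theta$ biholomorphic, and the factorization $P_E=\theta^{-1}\circ\tilde P_E$ transfers the holomorphic split submersion property of $\tilde P_E$ (stated just before the proposition) to $P_E$. The main obstacle I expect is the density-point/quasiconformal differentiability step that extracts pointwise equality of Beltrami coefficients on $E$ from the purely topological hypothesis of isotopy rel $E$; the symmetric gluing step on the $E^c$ side in the converse direction is likewise delicate when $E$ has positive Lebesgue measure or empty interior, since the conformal structure chosen across $w^\mu(E)$ must remain compatible with the isotopies on each $w^\mu(X_i)$.
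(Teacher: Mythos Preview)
The paper does not supply a proof of this proposition at all: it is quoted as Lieb's isomorphism theorem with a pointer to \cite{Earle2000}, and no argument is given in the present paper. So there is nothing in the paper to compare your approach against; what follows is an assessment of your sketch on its own terms.

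Your forward direction (well-definedness of $\theta$) is essentially sound. The density-point argument that a quasiconformal map fixing $E$ pointwise has total differential equal to the identity a.e.\ on $E$ is standard, and the isotopy rel $E$ restricts to each component $X_i$ to give an isotopy rel ideal boundary, hence equality in $T(X_i)$.

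The converse direction (injectivity of $\theta$) has a gap that is more serious than the one you flag. From the hypothesis $\mu|E=\nu|E$ in $L^\infty(E)$ you cannot deduce $w^\mu|_E=w^\nu|_E$: equality of Beltrami coefficients on a set says nothing about the values of the normalized solutions there, since those depend on the global data. But $w^\mu|_E=w^\nu|_E$ is exactly what you need before you can even speak of ``an isotopy from $(w^\mu)^{-1}\circ w^\nu$ to the identity rel $E$,'' because that map must fix $E$ pointwise. Your conformal maps $h_i:w^\mu(X_i)\to w^\nu(X_i)$ from the Teichm\"uller equivalence land in domains that are not yet known to have matching boundaries in $\widehat{\mathbb C}$, and the hypothesis $\mu|E=\nu|E$ does not supply the missing boundary correspondence. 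Even once one shows the $h_i$ extend to the closures and are compatible on overlaps (which uses the normalization and the boundary-value characterization of Teichm\"uller equivalence in a nontrivial way), assembling them into a global quasiconformal map across $w^\mu(\partial E)$ and then producing the isotopy rel $E$ requires the Earle--McMullen machinery on isotopies relative to closed sets. The phrase ``assemble via a gluing and isotopy extension argument'' is where the actual content of the theorem lives; the treatment in \cite{Earle2000} addresses these points, and your sketch does not.
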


If $\mu, \nu\in M(\mathbb{C})$ are $E$-equivalent, then $w^{\mu}(z)=w^{\nu}(z)$ for all $z\in E$. 
Thus, we have a well-defined map of $T(E)$ to $\widehat{\mathbb{C}}$ by $T(E)\ni [\mu]_{E}\mapsto w^{\mu}(z)\in \widehat{\mathbb{C}}$.
It is known that $\Psi ([\mu]_{E}, z):=w^{\mu}(z)$ is a holomorphic motion of $E$ over $T(E)$. 
Furthermore, the holomorphic motion $\Psi$ is \emph{universal} in the following sense (cf.~\cite{Earle2000}).

\begin{Pro}
\label{Pro:universal}
	Let $V$ a simply connected complex Banach manifold and $\phi : V\times E\to \widehat{\mathbb{C}}$ a holomorphic motion of $E$ over $V$. 
	Then, there exists a holomorphic map $f : V\to T(E)$ such that $\phi (p, w)=\Psi (f(p), w)$ hold for all $(p, w)\in V\times E$.
\end{Pro}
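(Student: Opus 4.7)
The plan is to associate to each $p \in V$ the class in $T(E)$ of the Beltrami coefficient of a quasiconformal extension of $\phi_p$ obtained by propagating local extensions along a path from the basepoint $p_0$ to $p$, and to show that the simple connectivity of $V$ renders this assignment well-defined and holomorphic.

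First, I would use Theorem \ref{thm:old}(1) to cover $V$ by open neighborhoods $\{U_\alpha\}$ on which $\phi$ admits local extensions $\Phi^{(\alpha)} : U_\alpha \times \widehat{\mathbb{C}} \to \widehat{\mathbb{C}}$ to normalized holomorphic motions of the whole sphere. Because $0, 1, \infty \in E$ are fixed by $\phi$, each $\Phi^{(\alpha)}_p$ coincides with the normalized quasiconformal map $w^{\mu^{(\alpha)}_p}$ for some $\mu^{(\alpha)}_p \in M(\mathbb{C})$, and the parametric measurable Riemann mapping theorem shows that $p \mapsto \mu^{(\alpha)}_p$ is holomorphic from $U_\alpha$ into $M(\mathbb{C})$. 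Composing with the holomorphic projection $P_E : M(\mathbb{C}) \to T(E)$ of Proposition \ref{Pro:Lieb} gives a holomorphic candidate $f_\alpha := P_E \circ \mu^{(\alpha)} : U_\alpha \to T(E)$.

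Second, I would show that the classes $f_\alpha(p)$ for different $\alpha$ and different choices of extension all coincide. Fix $p \in V$, choose a path $\gamma$ from $p_0$ to $p$, cover $\gamma([0,1])$ by finitely many charts $U_{\alpha_0}, \dots, U_{\alpha_k}$ with $p_0 \in U_{\alpha_0}$ and $p \in U_{\alpha_k}$, and for any two choices of extensions $\{\Phi^{(j)}\}$ and $\{\Phi'^{(j)}\}$ along this chain form the continuous family $H_t := (w^{\mu_{\gamma(t)}})^{-1} \circ w^{\mu'_{\gamma(t)}}$ of quasiconformal self-maps of $\widehat{\mathbb{C}}$. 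Since all extensions coincide with $\phi_{\gamma(t)}$ on $E$, every $H_t$ fixes $E$ pointwise; at $t = 0$ normalization yields $H_0 = \mathrm{id}$, so $H_1$ is isotopic to the identity rel $E$, and therefore $[\mu_p]_E = [\mu'_p]_E$ in $T(E)$. The simple connectivity of $V$ then shows the resulting class is also independent of the chosen path: any two paths from $p_0$ to $p$ are homotopic rel endpoints, and the homotopy lifts to a continuous two-parameter family of $E$-fixing quasiconformal maps giving an isotopy rel $E$ between the two endpoint maps. This produces a globally well-defined map $f : V \to T(E)$ agreeing with each $f_\alpha$ on $U_\alpha$, whence $f$ is holomorphic, and the identity $\Psi(f(p), w) = w^{\mu_p}(w) = \phi(p, w)$ for $w \in E$ holds by construction.

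The main obstacle is the patching step: showing that different choices of local extension, and different paths in $V$ joining $p_0$ to $p$, yield $E$-equivalent Beltrami coefficients. This is precisely where simple connectivity of $V$ is indispensable — without it, the chaining procedure produces instead a representation of $\pi_1(V, p_0)$ into the mapping class group of $(\widehat{\mathbb{C}}, E)$, which is exactly the monodromy obstruction that the main theorems of this paper are designed to analyze.
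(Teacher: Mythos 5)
The paper does not actually prove Proposition \ref{Pro:universal}: it is quoted as a known universality property of $T(E)$ with a reference to Earle--Mitra \cite{Earle2000}, so your attempt has to be judged on its own terms. Your overall plan --- patch together local extensions, continue them along paths from the basepoint, use the criterion ``isotopic to the identity rel $E$'' for $E$-equivalence, and kill path-dependence by simple connectivity --- is the standard strategy and is sound in outline; it also parallels the way the paper itself uses triviality of monodromy in Proposition \ref{Pro:tameQC}. However, two steps are not correct as written. First, the holomorphy of $p\mapsto \mu^{(\alpha)}_p$ is not given by the parametric measurable Riemann mapping theorem, which runs in the opposite direction (holomorphically varying Beltrami coefficients produce a holomorphic motion). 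What you need is the converse: that the Beltrami coefficient of a holomorphic motion of all of $\widehat{\mathbb{C}}$ varies holomorphically in the parameter. That is a genuine theorem --- Theorem 2 of \cite{Earle1994}, which this paper invokes for exactly this purpose in Section 8 --- and without it your local maps $f_\alpha=P_E\circ\mu^{(\alpha)}$ are not known to be holomorphic.

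Second, the chaining argument has a real gap: if $\mu_{\gamma(t)}$ means ``use the extension of whichever chart $\gamma(t)$ lies in'', then $t\mapsto w^{\mu_{\gamma(t)}}$ jumps at chart transitions, so $H_t$ is not a continuous family and the conclusion $H_1\simeq \mathrm{id}$ rel $E$ does not follow. The continuation must be renormalized at each transition time $t_j$ by precomposing with $(\Phi^{(j+1)}_{\gamma(t_j)})^{-1}\circ\Phi^{(j)}_{\gamma(t_j)}$, a quasiconformal map fixing $E$ pointwise; with this correction the family is continuous and your isotopy argument works, but then the global $f$ obtained by continuation agrees with the naive $f_\alpha$ on $U_\alpha$ only after replacing $\Phi^{(\alpha)}$ by its precomposition with the accumulated correction map (two extensions of $\phi|_{U_\alpha}$ may well differ by a nontrivial mapping class rel $E$, so ``$f$ agrees with each $f_\alpha$'' is false as stated). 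Since that corrected family is again a holomorphic motion of $\widehat{\mathbb{C}}$ extending $\phi|_{U_\alpha}$, holomorphy of $f$ does survive, so the argument is repairable. Finally, note that Theorem \ref{thm:old}(1) is stated for complex manifolds, whereas Proposition \ref{Pro:universal} concerns complex Banach manifolds; the local extendability you need is true over balls in a Banach space (Bers--Royden type arguments, and it is part of what \cite{Earle2000} establishes), but it is not literally covered by the statement you cite, and this should be acknowledged.
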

\subsection{Douady-Earle extension}
\label{subsection:Douady-Earle}
Let $h$ be an orientation preserving homeomorphism of the unit circle to itself. Douady and Earle (\cite{Douady1986}) found a canonical extension $E(h)$ of $h$ to the unit disk $\Delta$, which they call the \emph{barycentric extension}; it is now also called \emph{Douady-Earle extension} (cf.\ see also \cite{Lecko1988})
The extension $E(h)$ has the following significant properties:
\begin{enumerate}
  \item $E(h)$ is real analytic and it is {\emph{conformally natural}}, that is, for conformal automorphisms $f, g$ of $\Delta$,
\begin{equation}
\label{eq:confNatural}
	E(f\circ h\circ g)=f\circ E(h)\circ g
\end{equation}
holds in $\Delta$;
  \item if $h$ is the boundary function of a quasiconformal self-map of $\Delta$, then $E(h)$ is also quasiconformal.
\end{enumerate}

Let $\Gamma$ be a Fuchsian group acting on $\Delta$ which represents a hyperbolic Riemann surface $S$. For each $p=[\varphi]_{\Gamma}\in Teich(S)$, there exists an isomorphism $\theta_{p}$ of $\Gamma$ to $\textrm{M\"ob}(\Delta)$, the space of M\"obius transformations preserving $\Delta$ such that
\begin{equation*}
	\varphi\circ\gamma=\theta_{p}(\gamma)\circ \varphi
\end{equation*}
holds on $\partial \Delta$ for any $\gamma\in\Gamma$. Hence, from (\ref{eq:confNatural}) we have
\begin{equation*}
	E(\varphi)\circ\gamma=\theta_{p}(\gamma)\circ E(\varphi) \quad  \textrm{ in }\Delta ,  
\end{equation*}
and $\sigma_S : Teich(S)\ni p \mapsto E(\varphi)_{\bar{z}}/E(\varphi)_{z}\in \textrm{Belt}(\Gamma)\subset M(\Delta )$ is a well-defined map of $Teich(S)$ to $\textrm{Belt}(\Gamma)$. 
The map $\sigma_R$ has the following properties:
\begin{enumerate}
  \item It is a section for $\pi_{\Gamma} : \textrm{Belt}(\Gamma)\to Teich(S)$, that is, $\pi_{\Gamma}\circ \sigma_{S}=id$ and it is real analytic;
  \item for each $p\in Teich(S)$, $\sigma_{S}(p)$ is real analytic as a function on $\Delta$.
\end{enumerate}

\medskip

Using $\sigma_{S}$, we give a section of $P_{E}$ for the Teichm\"uller space $T(E)$ of a closed set $E$. 
In Proposition \ref{Pro:DEsection} below, we see that the section is real analytic. The further discussions are given in a paper of Earle-Mitra \cite{EarlePre} with more complete details. 
We will give a brief discussion for the convenience of the reader.

\medskip

From Proposition \ref{Pro:Lieb}, $T(E)$ is biholomorphic to $T(E^c)\times M(E)$.
We shall give a section for $\tilde{P}_{E} : M(\mathbb{C}) \to T(E^c)\times M(E)$.

Take a point $(p, \nu)$ in $T(E^c)\times M(E)$. 
The point $p=(p_i)_{i\in \mathbb{N}}$ is a point of the product Teichm\"uller space $\amalg_{i\in \mathbb{N}}T(X_{i})$, where $X_{i}$ are connected components of $E^c$ and $p_{i}\in T(X_i)$ $(i=1, 2, \dots )$.
Thus, we have $\sigma_{X_i}(p_i)\in \textrm{Belt}(\Gamma_i)$ for a Fuchsian group for $X_i$.
Projecting $\sigma_{X_i}(p_i)$ to the planar domain $X_i$, we have a measurable function on $X_i$, which we denote by $\widehat{\sigma}(p_i)$.

Since $\sigma_{X_i}(p_i)$ is real analytic on $\Delta$, so is $\widehat{\sigma}(p_i)$ on $X_i$.
Moreover, a property of Douady-Earle extension shows that there exists a constant $k_p \in [0, 1)$ depending only on $p$ such that $\|\widehat{\sigma}(p_i)\|_{\infty}\leq k_p$. 
Therefore, we may define a map $s_{E}$ from $T(E^c)\times M(E)$ to $M(\mathbb{C})$ by
\begin{equation}
	s_{E}(p, \nu)(z)=
	\begin{cases}
		\widehat{\sigma}(p_i)(z), & \qquad (z\in X_i; i\in \mathbb{N}), \\
		\nu (z), & \qquad (z\in E).
	\end{cases}
\end{equation}
Then we have (see also \cite{Earle2000})
\begin{Pro}
	\label{Pro:DEsection}
	The map $s_{E}$ is a section for $\tilde{P}_{E} : M(\mathbb{C})\to T(E^c)\times M(E)$. Furthermore,
	\begin{enumerate}
  \item $s_{E} : T(E^c)\times M(E)\to M(\mathbb{C})$ is real analytic;
  \item for each $(p, \nu)\in T(E^c)\times M(E)$, $s_{E}(p, \nu)$ is a real analytic function on $E^c$.
\end{enumerate}

\end{Pro}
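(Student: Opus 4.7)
The proof proposal would be structured around three separate verifications: the section identity, the real analyticity of each slice $s_{E}(p,\nu)$ on $E^{c}$, and the real analyticity of the assembled map $s_{E}$ itself.

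For the section identity, write $p=(p_{i})_{i\in\mathbb{N}}$. By construction, $s_{E}(p,\nu)|E=\nu$, which gives the $M(E)$ component of $\tilde{P}_{E}(s_{E}(p,\nu))$. On each $X_{i}$ we have $s_{E}(p,\nu)|X_{i}=\widehat{\sigma}(p_{i})$, the descent to $X_{i}$ of $\sigma_{X_{i}}(p_{i})\in\textrm{Belt}(\Gamma_{i})$. Since $\sigma_{X_{i}}$ is a section of $\pi_{\Gamma_{i}}$, this Beltrami coefficient represents $p_{i}\in T(X_{i})$, so $P_{T}(s_{E}(p,\nu))=(p_{i})_{i}=p$, and we conclude $\tilde{P}_{E}\circ s_{E}=\textrm{id}$.

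For statement (2), real analyticity on $E^{c}$: since $E^{c}$ is the disjoint union of the open sets $X_{i}$, it suffices to verify this on each $X_{i}$ separately. There, $s_{E}(p,\nu)$ is the descent of $\sigma_{X_{i}}(p_{i})=E(\varphi)_{\bar{z}}/E(\varphi)_{z}$, whose real analyticity on $\Delta$ is among the defining properties of the Douady-Earle extension recalled in the excerpt; since the covering $\Delta\to X_{i}$ is locally biholomorphic, the real analyticity descends.

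For statement (1), the real analyticity of the global map $s_{E}$: the $\nu$-dependence is the identity on $M(E)$, so all of the work lies in the $p$-dependence. For each fixed $i$, the section $\sigma_{X_{i}}:T(X_{i})\to\textrm{Belt}(\Gamma_{i})\subset L^{\infty}(\Delta)$ is real analytic, hence so is $p_{i}\mapsto\widehat{\sigma}(p_{i})\in L^{\infty}(X_{i})$. My strategy is to choose a basepoint $p^{0}=(p_{i}^{0})\in T(E^{c})$, expand each $\sigma_{X_{i}}$ in a convergent Taylor series around $p_{i}^{0}$, and assemble these expansions into a single convergent series for $s_{E}$ valued in $L^{\infty}(E^{c})$; the uniform bound $\|\widehat{\sigma}(p_{i})\|_{\infty}\leq k_{p}<1$ guarantees that the assembled function stays inside $M(\mathbb{C})$.

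The main obstacle is the uniformity of the component-wise expansions across the infinite product. One must extract from the explicit Douady-Earle construction a lower bound on the radii of convergence of the Taylor series for $\sigma_{X_{i}}$ at $p_{i}^{0}$ together with an upper bound on the $L^{\infty}$ norms of the Taylor coefficients, both depending only on an upper bound for the Teichm\"uller distance $d_{T}(X_{i},p_{i}^{0})$. This bounded-distance hypothesis is precisely what defines points of the product Teichm\"uller space $T(E^{c})$ in the excerpt, so once the quantitative dependence of the Douady-Earle section on its argument is made explicit, the component-wise Taylor series glue to give a convergent power series for $s_{E}$ into $M(\mathbb{C})$, which is the desired real analyticity.
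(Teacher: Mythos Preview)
The paper does not actually prove this proposition. The text preceding it gives only the \emph{construction} of $s_{E}$ and explicitly defers the verification of its properties to \cite{Earle2000} and the forthcoming \cite{EarlePre} (``The further discussions are given in a paper of Earle-Mitra \cite{EarlePre} with more complete details''). So there is no in-paper argument to compare against beyond the construction itself.

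Your treatment of the section identity and of statement (2) is correct and is exactly what one reads off from the construction together with the two listed properties of $\sigma_{S}$ in \S\ref{subsection:Douady-Earle}. These parts are unproblematic.

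For statement (1) you correctly isolate the only nontrivial point: uniformity of the real-analytic estimates for $\sigma_{X_i}$ across the infinitely many components $X_i$, controlled solely by a bound on $d_T(X_i,p_i^0)$. However, you do not actually establish this uniformity; you assert that ``once the quantitative dependence of the Douady-Earle section on its argument is made explicit'' the pieces glue. That is precisely the substantive work the paper outsources to \cite{EarlePre}, and it is not automatic: one needs that the barycentric-extension formula yields power-series expansions in the Beltrami coefficient (or in the boundary map) whose radii and coefficient bounds depend only on $\|\mu\|_\infty$, uniformly in the underlying Fuchsian group $\Gamma_i$. Your outline names the target but does not hit it. This is not a wrong approach---it is the expected approach---but as written it is an outline rather than a proof, which is also all the paper itself offers here.
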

\begin{Def}
	The $s_{E}$ defined as above is called the Douady-Earle section on $T(E)$.
\end{Def}

\medskip

{\bf A property in the reduced Teichm\"uller space.}

Here, we shall note a property of $\sigma_{S}$ related to reduced Teichm\"uller space.

Let $S_0$ be a hyperbolic Riemann surface of type $(g; n, m)$ with $m\geq 1$ and $\widehat{S}_0$ be the double of $S_0$ with respect to $\partial S_0$.
Then, there exists an anti-conformal involution $J_{S_0} : \widehat{S}_0\to \widehat{S}_0$ which keeps every boundary point fixed.
Let $f : S_0\to S$ be a quasiconformal map. Since $S$ is also a Riemann surface of the same type, we may consider the double $\widehat{S}$ of $S$ with respect to $\partial S$ and the anti-conformal involution $J_{S} : \widehat{S}\to \widehat{S}$ for $\widehat{S}$. 
We extend $f$ to a map $\hat{f} : \widehat{S}_0\to \widehat{S}$ symmetrically by
\begin{equation}
\hat{f}(z)=
	\begin{cases}
	f(z), \quad & z\in {S}_0 \\
	J_{S}(f(J_{S_0}(z))), \quad & z\in \widehat{S}_0\setminus S_0.
	\end{cases}
\end{equation}
Since those surfaces are bounded by smooth Jordan curves, $\hat{f}$ is a quasiconformal map on $\widehat{S}_0$.
In fact, we have $\widehat\Pi ([S, f])=[\widehat S, \hat f]\in Teich(\widehat S_0)$.

From the definition of $\hat f$, we have
$$
J_{S}\circ \hat{f}=\hat f\circ J_{S_0}.
$$
Hence, we may assume that there is a lift $F :\Delta\to\Delta$ of $\hat f$ such that
\begin{equation*}
	j\circ F=F\circ j,
\end{equation*}
where $j(z)=\bar z$. Then the boundary function $\varphi_F$ of $F$ also satisfies
\begin{equation}
\label{eqn:symmetric}
	j\circ \varphi_F=\varphi_F\circ j.
\end{equation}
From (\ref{eqn:symmetric}) and the conformal naturality (\ref{eq:confNatural}), we have
\begin{equation}
	j\circ E(\varphi_F)=E(\varphi_F)\circ j
\end{equation}
on $\Delta$.
The map $E(\varphi_F)$ is projected to a real analytic quasiconformal map $e(f)$ from $\widehat{S}_0$ to $\widehat{S}$, and $e(f)$ is symmetric, namely,
\begin{equation}
	J_{S}\circ e(f)=e(f)\circ J_{S_0}
\end{equation}
holds on $\widehat{S}_0$.
Therefore, $e(f)(\partial S_0)=\partial S$ and we see that $e(f)(S_0)=S$ because $e(f)$ is orientation preserving.
The symmetric quasiconformal map $e(f)$ determines the same point as $\hat f$ in $T(\widehat{S}_0)$.
Hence $e(f)|S_0 : S_0\to S$ gives the same point as $f$ in $Teich^{\#}(S_0)$.

Summing up the above arguments, we conclude the following proposition which is used in Step 2 of \S 5:
\begin{Pro}
	\label{Pro:symmetricQC}
	Let $S_0$ be a Riemann surface of type $(g; n, m)$ with $m\geq 1$ and $f : S_0\to S$ be a quasiconformal homeomorphism. Then, there exists a real analytic symmetric quasiconformal homeomorphism $e(f) : \widehat{S}_0 \to \widehat{S}$ such that $e(f)|S_0$ determines the same point as $f$ in $Teich^{\#}(S_0)$.
\end{Pro}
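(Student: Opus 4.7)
The strategy is to unwind the construction already sketched in the paragraphs preceding the proposition: take the symmetric doubling $\hat f$ of $f$, lift to the universal cover in a way compatible with the anti-holomorphic involution, apply the Douady--Earle barycentric extension to the boundary values, and exploit conformal naturality to push the resulting map back down to a real-analytic symmetric representative.

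More precisely, I would proceed as follows. First, since $\partial S_0$ and $\partial S$ are smooth Jordan curves, the symmetric extension
\begin{equation*}
\hat f(z)=\begin{cases} f(z), & z\in S_0,\\ J_S\circ f\circ J_{S_0}(z), & z\in\widehat S_0\setminus S_0,\end{cases}
\end{equation*}
is a quasiconformal homeomorphism of $\widehat S_0$ onto $\widehat S$, and by construction $J_S\circ\hat f=\hat f\circ J_{S_0}$. Next, choose Fuchsian models so that the reflection $j(z)=\bar z$ across the real axis covers both involutions $J_{S_0}$ and $J_S$. Then a suitable lift $F:\Delta\to\Delta$ of $\hat f$ intertwines $j$, i.e.\ $j\circ F=F\circ j$, so that its boundary values $\varphi_F$ on $\partial\Delta$ also satisfy $j\circ\varphi_F=\varphi_F\circ j$. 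Applying the Douady--Earle extension $E(\varphi_F)$ and invoking conformal naturality (\ref{eq:confNatural}) with $f=g=j$ yields $j\circ E(\varphi_F)=E(\varphi_F)\circ j$ on $\Delta$, which is the symmetry needed to descend.

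The map $E(\varphi_F)$ is $\Gamma$-compatible for the Fuchsian group representing $\widehat S_0$ (this is the content of the Douady--Earle section $\sigma_{\widehat S_0}$ from Section~\ref{subsection:Douady-Earle}), so it projects to a real analytic quasiconformal map $e(f):\widehat S_0\to\widehat S$. The relation $j\circ E(\varphi_F)=E(\varphi_F)\circ j$ projects to $J_S\circ e(f)=e(f)\circ J_{S_0}$, hence $e(f)(\partial S_0)=\partial S$; since $e(f)$ is orientation preserving, necessarily $e(f)(S_0)=S$, and $e(f)|_{S_0}:S_0\to S$ is a real analytic quasiconformal homeomorphism. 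Finally, to conclude that $e(f)|_{S_0}$ represents the same point as $f$ in $Teich^{\#}(S_0)$, observe that both $\hat f$ and $E(\varphi_F)$ (restricted to boundary values) agree with $\varphi_F$ on $\partial\Delta$, so they determine the same point of $Teich(\widehat S_0)$; by the construction of $\widehat\Pi$ in Section~\ref{subsection:Teich}, this means $e(f)|_{S_0}$ and $f$ give the same element of $Teich^{\#}(S_0)$.

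The main technical obstacle is the choice of lift: one must verify that a lift $F$ of $\hat f$ compatible with $j$ genuinely exists, which amounts to checking that the symmetric Fuchsian groups for $\widehat S_0$ and $\widehat S$ (those normalized by $j$) can be chosen so that conjugation by $F$ maps one onto the other and commutes with conjugation by $j$. This uses the fact that every quasiconformal map between doubled surfaces respecting the involution admits such a symmetric lift, which in turn follows from the uniqueness of the Fuchsian model normalized by three boundary fixed points on the symmetry axis. Once this lift is in hand, the rest is routine application of conformal naturality and the general properties of Douady--Earle sections recorded in Proposition~\ref{Pro:DEsection}.
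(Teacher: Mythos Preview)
Your proposal is correct and follows essentially the same argument as the paper: the proposition is stated as a summary of the construction given in the preceding paragraphs, and you have reproduced that construction step for step (symmetric doubling $\hat f$, $j$-equivariant lift $F$, Douady--Earle extension of the boundary map, descent by conformal naturality, and identification in $Teich^{\#}(S_0)$ via $\widehat\Pi$). The one point worth flagging is that both you and the paper invoke \eqref{eq:confNatural} with the anti-conformal involution $j(z)=\bar z$, whereas the property is stated only for conformal automorphisms; the barycentric extension does indeed satisfy $E(j\circ h\circ j)=j\circ E(h)\circ j$, but this is a mild extension of the cited naturality and could be noted explicitly.
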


\section{Extending holomorphic motions as quasiconformal motions}
In this section, we shall view the extension problem for holomorphic motions in a general setting. 
\begin{Pro}
\label{Pro:tameQC}
	Let $M$ be a connected complex manifold and $\phi : M\times E\to \widehat{\mathbb{C}}$ a holomorphic motion of a closed set $E$ over $M$. 
	Suppose that the monodromy of $\phi$ is trivial. Then there exists a map $\Phi : M\to M(\mathbb{C})$ with the following properties:
	\begin{enumerate}
  \item The map $\Phi$ is real analytic;
  \item for each $p\in M$, $\Phi (p)\in M(\mathbb{C})$ is real analytic in $\mathbb{C}\setminus E$;
  \item $w^{\Phi (p)}(z)=\phi (p, z)$ for every $(p, z)\in M\times E$, where $w^{\mu}$ is the normalized quasiconformal self-map of $\widehat{\mathbb{C}}$ whose Beltrami coefficient is $\mu$.
\end{enumerate}
\begin{proof}
Let $\tilde{M}$ be the holomorphic universal covering of $M$ and $\Gamma$ a cover transformation group of $\tilde{M}$.
Then, the canonical projection $\pi : \tilde{M}\to M=\tilde{M}/\Gamma$ defines a holomorphic motion $\tilde{\phi}$ of $E$ over $\tilde{M}$ by
\begin{equation}
\label{eqn:liftofHolMot}
	\tilde{\phi}(\tilde p, z)=\phi(\pi (\tilde p), z), \qquad (\tilde p, z)\in \tilde{M}\times E.
\end{equation}

Since $\tilde{M}$ is simply connected, it follows from Proposition \ref{Pro:universal} that there is a holomorphic map $f : \tilde{M}\to T(E)$ such that
\begin{equation}
\label{eqn:Univ.ToTeich}
	\tilde{\phi}(\tilde p, z)=\Psi (f(\tilde p), z), \quad (\tilde p, z)\in \tilde{M}\times E.
\end{equation} 
Moreover, it follows from the triviality of the monodromy of $\phi$ that $f\circ \gamma=f$ for any $\gamma\in \Gamma$.
Thus, we have a well-defined holomorphic map $F : M\to T(E)$ with $F\circ \pi=f$.
Put $\Phi = s_{E}\circ F$, where $s_{E}$ is the Douady-Earle section given in Proposition \ref{Pro:DEsection}.
We verify that the map $\Phi$ has the desired properties.
Indeed, the statements (1) and (2) are direct consequences from Proposition \ref{Pro:DEsection}. We show (3).

For any $(p, z)\in M\times E$, we take $\tilde p\in \tilde M$ such that $\pi (\tilde p)=p$.
It follows from (\ref{eqn:liftofHolMot}) and (\ref{eqn:Univ.ToTeich}) that 
\begin{eqnarray*}
	\phi (p, z) &=& \tilde \phi (\tilde p, z) = \Psi (f(\tilde p), z) = \Psi (F\circ \pi (\tilde p), z) \\
	&=& \Psi (F(p), z)=w^{s_E\circ F(p)}(z)=w^{\Phi (p)}(z).
\end{eqnarray*}
Thus, we have proved (3).
\end{proof}
\end{Pro}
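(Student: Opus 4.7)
The plan is to lift $\phi$ to the universal cover of $M$, apply the universality of the Teichm\"uller holomorphic motion $\Psi$, use the triviality of the monodromy to descend the resulting classifying map to $M$, and then compose with the Douady--Earle section to produce the desired family of Beltrami coefficients.

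Concretely, let $\pi\colon\tilde M\to M$ be the universal covering with deck group $\Gamma$, and set $\tilde\phi(\tilde p,z)=\phi(\pi(\tilde p),z)$. Because $\tilde M$ is simply connected, Proposition \ref{Pro:universal} yields a holomorphic map $f\colon\tilde M\to T(E)$ with $\tilde\phi(\tilde p,z)=\Psi(f(\tilde p),z)$. My main task is to show that $f$ is $\Gamma$-invariant, i.e.\ $f(\gamma\tilde p)=f(\tilde p)$ in $T(E)$ for every deck transformation $\gamma$ and every $\tilde p\in\tilde M$. Once this is established, $f$ descends to a holomorphic $F\colon M\to T(E)$, and I define $\Phi:=s_E\circ F$, where $s_E$ is the Douady--Earle section of Proposition \ref{Pro:DEsection}. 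Properties (1) and (2) then follow directly from the real analyticity assertions in Proposition \ref{Pro:DEsection}, while (3) follows because $\Psi([\mu]_E,z)=w^\mu(z)$ and $\pi_E\circ s_E=\mathrm{id}$, so $w^{\Phi(p)}(z)=\Psi(F(p),z)=\tilde\phi(\tilde p,z)=\phi(p,z)$.

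The hard part is establishing $\Gamma$-invariance of $f$. Recall that $f(\tilde p)=[\mu_{\tilde p}]_E$, where $w^{\mu_{\tilde p}}(z)=\tilde\phi(\tilde p,z)$ on $E$, and two Beltrami coefficients represent the same point of $T(E)$ iff the corresponding normalized quasiconformal maps agree on $E$ and their composition is isotopic to the identity rel $E$. The equality on $E$ is immediate from $\pi\circ\gamma=\pi$. The isotopy condition is where the monodromy hypothesis enters: choose a path in $\tilde M$ from $\tilde p$ to $\gamma\tilde p$ and push it down to a loop $\sigma\in\pi_1(M,\pi(\tilde p))$. Along the path the map $f$ traces a path in $T(E)$ joining $f(\tilde p)$ to $f(\gamma\tilde p)$, and the induced isotopy of $\widehat{\mathbb C}$ restricts on any finite subset $E'\subset E$ to a braid whose mapping class is exactly $\rho_{E'}(\sigma)$. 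Triviality of the monodromy gives $\rho_{E'}(\sigma)=[\mathrm{id}]$ for every finite $E'$, which is precisely the condition that the restriction of the isotopy to $E'$ is fixed-endpoint homotopic rel $E'$ to the identity isotopy. Passing to the closure by a direct-limit/compactness argument upgrades this to an isotopy rel $E$, so $f(\gamma\tilde p)=f(\tilde p)$ in $T(E)$.

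I expect the principal obstacle to lie in this last upgrade, namely in translating cleanly between the braid-theoretic formulation of trivial monodromy on finite subsets and the rel-$E$ isotopy condition needed for equality in $T(E)$ when $E$ is an arbitrary closed set. Everything else is formal manipulation of the universal property of $\Psi$, the Douady--Earle section, and functoriality of the covering $\pi$.
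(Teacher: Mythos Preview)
Your proposal is correct and follows essentially the same route as the paper: lift to the universal cover, invoke Proposition~\ref{Pro:universal} to get $f\colon \tilde M\to T(E)$, use trivial monodromy to descend to $F\colon M\to T(E)$, and set $\Phi=s_E\circ F$. The only difference is one of emphasis: the paper asserts the $\Gamma$-invariance of $f$ in a single line (``it follows from the triviality of the monodromy''), whereas you correctly flag the passage from trivial monodromy on all finite $E'\subset E$ to equality in $T(E)$ as the substantive step---but this does not constitute a different approach.
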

\begin{Rem}
	For a given holomorphic motion $\phi : M\times E\to \widehat{\mathbb{C}}$ with trivial monodromy, the above $\Phi : M\to M(\mathbb{C})$ defines a \lq\lq motion\rq\rq  $\bar{\phi} : M\times \widehat{\mathbb{C}}\to \widehat{\mathbb{C}}$ of $\widehat{\mathbb{C}}$ over $M$ by
	\begin{equation}
	\label{eqn:qcMotion}
		\bar{\phi}(p, z)=w^{\Phi (p)}(z), \qquad (p, z)\in M\times \widehat{\mathbb{C}}.
	\end{equation}
	
It is not holomorphic because $\Phi$ is not necessarily holomorphic. 
However, it still makes a continuous family of quasiconformal maps parametrized by $M$. 
In fact, $\bar{\phi}$ is a quasiconformal motion in the sense of Sullivan and Thurston \cite{Sullivan1986}. 
\end{Rem}
\section{Reductions of Theorem \ref{thm:mythm1}}
The proof of Theorem \ref{thm:mythm1} is done by several steps in \S5 while it uses an idea similar to that in Slodkowski \cite{Slodkowski1995}. However, our methods in \S 5 are slightly different and they are simpler than the argument in \cite{Slodkowski1995}. 

In this section, we reduce the statement to a simpler one; first, we see that the argument is deduced from the case where the set $E$ is finite.
\begin{lemma}
\label{lemma:finiteE}
	Let $\phi : X\times E\to\widehat{\mathbb{C}}$ be a holomorphic motion of $E$ over a Riemann surface $X$. 
	Then, $\phi$ can be extended to a holomorphic motion of $\widehat{\mathbb{C}}$ over $X$ if and only if for any finite set $E'$ of $E$ the restricted holomorphic motion to $E'$ is extended to a holomorphic motion of $\widehat{\mathbb{C}}$ over $X$.
\end{lemma}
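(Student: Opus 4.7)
One direction is immediate: if $\phi$ extends to a holomorphic motion $\Phi:X\times\widehat{\mathbb{C}}\to\widehat{\mathbb{C}}$, then for any finite $E'\subset E$ the same $\Phi$ restricts to an extension of $\phi|_{E'}$. The content is the converse, so assume every finite restriction extends.

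The plan is a normal families / diagonal extraction argument. First I would choose a countable subset $\{z_{1},z_{2},\dots\}\subset E$ that is dense in $E$ and contains $0,1,\infty$; set $E_{n}=\{z_{1},\dots,z_{n}\}$. By hypothesis, for each $n$ pick an extension $\Phi_{n}:X\times\widehat{\mathbb{C}}\to\widehat{\mathbb{C}}$ of the normalized holomorphic motion $\phi|_{E_{n}}$. Because each $\Phi_{n}(p,\cdot)$ is normalized (fixes $0,1,\infty$), the key input I need is a dilatation bound for $\Phi_{n}(p,\cdot)$ that is independent of $n$ and locally uniform in $p\in X$. This is provided by the standard consequence of the $\lambda$-lemma/Slodkowski theorem: any holomorphic motion of $\widehat{\mathbb{C}}$ over $X$ yields, at each $p\in X$, a $K(p)$-quasiconformal map with $K(p)$ depending only on the Kobayashi distance $d_{\mathrm{Kob}}^{X}(p_{0},p)$, not on the particular motion. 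In particular, for each compact $K\subset X$ the family $\{\Phi_{n}(p,\cdot):p\in K,\ n\geq 1\}$ is uniformly $K_{0}$-quasiconformal for some $K_{0}=K_{0}(K)$.

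Next, I would exhaust $X$ by compact sets $K_{1}\subset K_{2}\subset\cdots$ and, on each $K_{j}\times\widehat{\mathbb{C}}$, use normality of uniformly quasiconformal normalized families on $\widehat{\mathbb{C}}$ together with continuity in $p$ (the motions $\Phi_{n}$ are jointly continuous by the $\lambda$-lemma) to extract subsequences converging uniformly; a diagonal argument then produces a single subsequence $\Phi_{n_{k}}$ converging uniformly on compact subsets of $X\times\widehat{\mathbb{C}}$ to a map $\Phi:X\times\widehat{\mathbb{C}}\to\widehat{\mathbb{C}}$. I then check the three conditions of Definition \ref{dfn:holomorphic motion} for $\Phi$: the normalization $\Phi(p_{0},z)=z$ passes to the limit; holomorphicity of $\Phi(\cdot,z)$ for each fixed $z\in\widehat{\mathbb{C}}$ follows from Weierstrass, since each $\Phi_{n_{k}}(\cdot,z)$ is holomorphic on $X$ and convergence is locally uniform; and injectivity of $\Phi(p,\cdot)$ follows because a locally uniform limit of normalized $K_{0}$-quasiconformal maps is again a quasiconformal homeomorphism (it cannot degenerate to a constant, being normalized at $0,1,\infty$).

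Finally, I would verify that $\Phi$ actually extends $\phi$. For each fixed $j$, we have $\Phi_{n_{k}}(p,z_{j})=\phi(p,z_{j})$ for all sufficiently large $k$, so $\Phi(p,z_{j})=\phi(p,z_{j})$ for every $p\in X$. Since $\Phi(p,\cdot)$ is continuous on $\widehat{\mathbb{C}}$ (being quasiconformal) and $\phi(p,\cdot)$ is continuous on $E$ (again by the $\lambda$-lemma), the equality propagates from the dense set $\{z_{j}\}$ to all of $E$. The main obstacle in this outline is the dilatation bound in the second paragraph: the whole argument hinges on having a bound on the quasiconformal constant of $\Phi_{n}(p,\cdot)$ that is independent of the particular finite extension chosen at stage $n$. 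Once that uniform bound is invoked, the remaining steps are standard normal-family considerations.
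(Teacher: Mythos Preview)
Your proposal is correct and follows essentially the same strategy as the paper: choose an increasing sequence of finite subsets containing $\{0,1,\infty\}$, take extensions $\Phi_n$, invoke a locally uniform $K$-quasiconformal bound independent of $n$ (the paper cites Bers--Royden, Theorem~2, for this), and pass to a limit by a diagonal/normal-families argument.

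The only organizational difference is in how the compactness is arranged. The paper first fixes a countable dense set $A\subset\widehat{\mathbb{C}}$ and uses Montel's theorem in the $p$-variable (each $\phi_n(\cdot,a)$ omits $0,1,\infty$) to extract a subsequence converging for every $a\in A$; only afterwards does it invoke the uniform quasiconformal bound to upgrade convergence from $X\times A$ to $X\times\widehat{\mathbb{C}}$. You instead appeal directly to joint equicontinuity on $K_j\times\widehat{\mathbb{C}}$ coming from the quasiconformal bound together with joint continuity of $\Phi_n$. Both routes work; the paper's ordering is slightly cleaner because Montel handles the $p$-direction without any discussion of how equicontinuity in $p$ interacts with equicontinuity in $z$, whereas your direct route implicitly needs equicontinuity in $p$ uniform in $(n,z)$, which is again Montel since $\Phi_n(\cdot,z)$ omits three points. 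Either way the substance is the same.
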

\begin{proof}
	\lq\lq only if\rq\rq statement is obvious; we give a proof of \lq\lq if"-part.
	In fact, one may find a proof in some literature (e.~g.~\cite{BJMS2012}). We will give the proof for the convenience of the reader.
	
	Let $\{0, 1, \infty\}\subset E_1\subset E_2\dots \subset E_n\dots \subset E$ be a sequence of finite subsets of $E$ with $E=\overline{\cup_{n=1}^{\infty}E_n}$. 
	From the assumption, there exist holomorphic motions $\phi_{n} : X\times \widehat{\mathbb{C}} \to\widehat{\mathbb{C}}$ of $\widehat{\mathbb{C}}$ $(n=1, 2, \dots, )$ which extend $\phi |{E_n}$. 
	We take a countable dense set $A:=\{a_{k}\}_{k=1}^{\infty}$ of $\widehat{\mathbb{C}}\setminus\{0, 1, \infty\}$ containing all $E_{n}$ $(n=1, 2, \dots)$.
	
	Now, we consider $\{\phi_{n}(\cdot, a_1)\}_{n=1}^{\infty}$, a family of holomorphic functions on $X$. 
	Since $\phi_{n}(X, a_{1})\subset\widehat{\mathbb{C}}\setminus\{0, 1, \infty\}$, the family is a normal family by Montel's theorem. 
	Hence, we may find a subsequence $\{n_{1j}\}_{j=1}^{\infty}$ of $\mathbb{N}$ such that $\{\phi_{n_{1j}}(\cdot, a_{1})\}_{j=1}^{\infty}$ normally converges to a holomorphic function on $X$.
	By the same reason, we may find a subsequence $\{n_{2j}\}_{j=1}^{\infty}$ of $\{n_{1j}\}_{j=1}^{\infty}$ such that $\{\phi_{n_{2j}}(\cdot, a_{2})\}_{j=1}^{\infty}$ normally converges to a holomorphic function on $X$.
	Using the same argument and the diagonal argument, we have a sequence $\{n_{k}\}_{k=1}^{\infty}\subset \mathbb{N}$ such that $\{\phi_{n_k}(\cdot, a)\}_{k=1}^{\infty}$ converges to a holomorphic function $F(\cdot, a)$ on $X$ for each $a\in A$. 
	
	Since $\phi_{n_k}$ is an extension of $\phi$, we have
	$$
	F(p, z)=\lim_{k\to \infty}\phi_{n_k}(p, z)=\phi (p, z)
	$$
	for each $p\in X$ and $z\in E_n$ and $F(p_{0}, z)=z$ for all $z\in \widehat{\mathbb{C}}$.
	
	Next, we show the injectivity of $F(p, z)$ for $z\in\widehat{\mathbb C}$.
	We see that the map $\widehat{\mathbb{C}}\ni z\mapsto \phi_{n}(p, z)$ gives a normalized quasiconformal map for each $p\in X$. 
	Moreover, for each compact subset $V$ of $X$, there exists a constant $K>1$ depending only on $V$ such that $\phi_{n}(p, \cdot)$ is a normalized $K$-quasiconformal map on $\widehat{\mathbb{C}}$ for any $p\in V$ (cf. \cite{Bers1986a} Theorem 2). 
	Thus, $\{\phi_{n}(p, \cdot)\mid p\in V \}_{n=1}^{\infty}$ is a normal family of quasiconformal maps on $\widehat{\mathbb{C}}$. 
	
	We have already taken the sequence $\{n_{k}\}_{k=1}^{\infty}$ above such that $\{\phi_{n_{k}}(\cdot, \cdot)\}_{k=1}^{\infty}$ converges to $F(\cdot, \cdot)$ on $X\times A$. 
	Since $\overline{A}=\widehat{\mathbb{C}}$, it converges also on $X\times \widehat{\mathbb{C}}$ and $F(p, \cdot)$ is a quasiconformal map. 
	In particular, $F(p, z)\not=F(p, z')$ if $z\not=z'$.  
	
	Above all, we have a holomorphic motion $F : X\times \widehat{\mathbb C} \to \widehat{\mathbb{C}}$ of $\widehat{\mathbb C}$ over $X$ which extends $\phi$ as desired.
\end{proof}
We may assume that the set $E$ is finite. In the next step, we see that the Riemann surface could be of analytically finite and the holomorphic motion is defined over the closure of the Riemann surface.
\begin{lemma}
	Let $\phi : X\times E\to \widehat{\mathbb{C}}$ be a holomorphic motion of a finite set $E$ over a Riemann surface $X$. 
	Then, $\phi$ can be extended to a holomorphic motion of $\widehat{\mathbb{C}}$ over $X$ if and only if for any subsurface $X'$ of $X$ the restricted holomorphic motion to $X'$ can be extended to a holomorphic motion of $\widehat{\mathbb{C}}$ over $X'$.
\end{lemma}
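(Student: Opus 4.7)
The ``only if'' direction is immediate: if $\Phi:X\times\widehat{\mathbb C}\to\widehat{\mathbb C}$ extends $\phi$, then for any subsurface $X'\subset X$ the restriction $\Phi|_{X'\times\widehat{\mathbb C}}$ extends $\phi|_{X'\times E}$. So the real content is the ``if'' direction, and the plan is to run an exhaustion plus normal-family/diagonal extraction argument that closely parallels the proof of Lemma \ref{lemma:finiteE}, but now exhausting in the base variable rather than in $E$.

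First I would fix the basepoint $p_0\in X$ and choose an increasing exhaustion $X_1\Subset X_2\Subset\cdots$ of $X$ by relatively compact analytically finite subsurfaces, each containing $p_0$, with $\bigcup_n X_n=X$. By hypothesis (applied to each $X_n$) there exists, for every $n$, a holomorphic motion $\Phi_n:X_n\times\widehat{\mathbb C}\to\widehat{\mathbb C}$ extending $\phi|_{X_n\times E}$ and, by the normalization, satisfying $\Phi_n(p_0,z)=z$ for all $z\in\widehat{\mathbb C}$. Next choose a countable dense set $A=\{a_k\}_{k=1}^{\infty}\subset\widehat{\mathbb C}\setminus\{0,1,\infty\}$ containing $E$. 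For each fixed $a_k$, the sequence $\{\Phi_n(\cdot,a_k)\}_{n\geq k}$ is a sequence of holomorphic functions on the increasing union $\bigcup_{n\geq k}X_n$ that take values in $\widehat{\mathbb C}\setminus\{0,1,\infty\}$, hence form a normal family by Montel's theorem on every compact subset of $X$. A standard diagonal extraction then produces a single subsequence $\{n_j\}$ such that $\Phi_{n_j}(\cdot,a)$ converges locally uniformly on $X$ to a holomorphic function $F(\cdot,a)$ for every $a\in A$.

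To upgrade the convergence from $A$ to all of $\widehat{\mathbb C}$ and to secure injectivity, I would invoke the quantitative form of the $\lambda$-lemma used in the proof of Lemma \ref{lemma:finiteE}: for each compact set $V\subset X$ there is a constant $K=K(V)>1$, depending only on the hyperbolic distance in $X$ from $p_0$ to $V$ (which by Schwarz--Pick bounds the hyperbolic distance in each $X_n\supset V$), such that $\Phi_n(p,\cdot)$ is $K$-quasiconformal on $\widehat{\mathbb C}$ for every $p\in V$ and every sufficiently large $n$. Thus $\{\Phi_n(p,\cdot):p\in V\}_{n\geq N(V)}$ is a normal family of normalized $K$-quasiconformal maps; combined with the pointwise convergence on the dense set $A$, this forces $\Phi_{n_j}(p,z)\to F(p,z)$ for all $(p,z)\in X\times\widehat{\mathbb C}$, with $F(p,\cdot)$ a $K$-quasiconformal map (in particular injective) for each $p\in V$.

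Finally, $F(\cdot,z)$ is holomorphic on $X$ as a locally uniform limit of holomorphic functions, $F(p_0,z)=z$, and $F(p,z)=\phi(p,z)$ for $(p,z)\in X\times E$ because each $\Phi_{n_j}$ agrees with $\phi$ on $E$. Hence $F:X\times\widehat{\mathbb C}\to\widehat{\mathbb C}$ is a holomorphic motion of $\widehat{\mathbb C}$ over $X$ extending $\phi$, which is the desired conclusion. The main technical point is the uniform $K$-quasiconformality, and the only subtlety there is verifying that the bound depends only on the hyperbolic geometry of $X$ itself and not on the approximating $X_n$; this is exactly where Schwarz--Pick for the inclusion $X_n\hookrightarrow X$ (applied in reverse to the distances) is needed, and once that is in hand the rest is a routine adaptation of the argument already presented for Lemma \ref{lemma:finiteE}.
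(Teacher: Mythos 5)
Your proposal is correct and follows essentially the same route as the paper: exhaust $X$ by relatively compact subsurfaces, take the extensions over each $X_n$ supplied by the hypothesis, and run the same Montel/diagonal extraction plus uniform-quasiconformality argument as in the proof of Lemma \ref{lemma:finiteE}. One small correction: the Schwarz--Pick inequality for the inclusion $X_n\hookrightarrow X$ goes the other way ($d_{X}\leq d_{X_n}$, since hyperbolic metrics decrease under inclusion), so the hyperbolic distance in $X$ does not bound the distances in the $X_n$ from above; instead, fix $N$ with $V\Subset X_N$ and use $d_{X_n}\leq d_{X_N}$ on $V$ for all $n\geq N$, which gives the required bound $K=K(V)$ uniform in $n$ and the rest of your argument goes through unchanged.
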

\begin{proof}
	Again, \lq\lq only if\rq\rq statement is obvious; we give a proof of \lq\lq if\rq\rq-part.
	
	Let $x_{0}\in X_{1}\subset X_{2}\subset \dots , X_{n}\subset X_{n+1}\subset \dots$ be a sequence of domains of $X$ satisfying the following conditions.
	\begin{enumerate}
  \item each domain $X_{n}$ is bounded by a finite number of analytic Jordan curves in $X$;
  \item $\overline{X_{n}}:=X_{n}\cup \partial X_{n}$ is compact in $X$ $(n=1, 2, \dots )$;
  \item every connected component of $X\setminus X_{n}$ is non-compact in $X$ $(n=1, 2, \dots )$;
  \item $X=\cup_{n=1}^{\infty}X_{n}$.
\end{enumerate}
Here, a simple curve $\gamma$ on a Riemann surface $X$ is called {\em{analytic}} if for each point $p\in \gamma$ there exist a neighborhood $U_{x}$ of $p$ and a conformal map $\varphi : U_{p}\to \mathbb{C}$ such that $\varphi (U_{p}\cap \gamma)\subset \mathbb{R}$.
Sometimes, $\{X_{n}\}_{n=1}^{\infty}$ is called a \emph{regular exhaustion} of $X$. The existence of a regular exhaustion of an open Riemann surface is well known (cf. \cite{Ahlfors-Sario}).

Let $\psi_{n} : X_{n}\times E\to \widehat{\mathbb{C}}$ be a holomorphic motion restricted to $X_{n}$, namely, $\psi_{n}(p, z)=\phi (p, z)$ for $(p, z)\in X_{n}\times E$.
From our assumption, there exist holomorphic motions $\widehat{\psi}_{n} : X_{n}\times \widehat{\mathbb{C}}\to \widehat{\mathbb{C}}$ of $\widehat{\mathbb{C}}$ over $X_{n}$ which extends $\psi_{n}$. 
For each $z\in \widehat{\mathbb{C}}\setminus\{0, 1, \infty\}$, we have $\widehat{\psi}_{n}(X_{n}, z)\subset \widehat{\mathbb{C}}\setminus \{0, 1, \infty\}$ $(n=1, 2, \dots )$. 
Hence, by Montel's theorem the family $\{\widehat{\psi}_n(\cdot , z)|D\}_{n=k}^{\infty}$ is normal in any domain $D$ contained in $X_k$.

Take a countable dense subset $A$ in $\widehat{\mathbb{C}}\setminus E$ and repeat the same argument as in the proof of Lemma \ref{lemma:finiteE}. 
Then, as a limit of (a subsequence of) $\{\widehat{\psi}_{n}(\cdot, z)\}_{n=1}^{\infty}$ $(z\in A)$, we have a holomorphic motion of $\widehat{\mathbb{C}}$ over $X$ which extends $\phi$.
\end{proof}
We say that a Riemann surface $X$ is \emph{compact bordered} if it is of finite genus and bounded by a finite number of analytic Jordan curves. 
Hence, to prove Theorem \ref{thm:mythm1}, it suffices to show that a holomorphic motion $\phi : X\times E\to \widehat{\mathbb{C}}$ of a finite set $E=\{0, 1, \infty, z_1, \dots, z_n\}$ over a compact bordered Riemann surface $X$ is extended to a holomorphic motion of $\widehat{\mathbb{C}}$ over $X$. 
Furthermore, we may assume that the holomorphic motion $\phi$ is a holomorphic motion over a neighborhood of $\overline{X}$.

For such a holomorphic motion $\phi : X\times E\to \widehat{\mathbb{C}}$, we set
\begin{align}
\label{eqn:radius}
	R=\max \{|\phi (p, z_{j})|\mid p\in \overline{X}, 1\le j\le n\}+1, \\
	r=\frac{1}{3} \min \{|\phi (p, z_{j})|\mid x\in \overline{X}, 1\le j\le n\}, \notag
\end{align}
and
$$
\Delta_{R}^{c}=\{z\in \mathbb{C} \mid |z|\geq R\}, \quad \Delta_{r}=\{w\in\mathbb{C}\mid |z|<r\}.
$$
Note that $0<r<3r<R$. 
Then, we define a holomorphic motion $\tilde{\phi}$ of $E\cup\overline{\Delta}_{r}\cup\Delta_{R}^{c}$ over $X$ by
\begin{equation}
	\tilde{\phi}(p, z)=
	\begin{cases}
		\phi (p, z), & \quad (p, z)\in X\times E, \\
		z, & \quad (p, z)\in X\times (\overline{\Delta}_{r}\cup\Delta_{R}^{c}).
	\end{cases}	
\end{equation}
Obviously, the monodromy of $\tilde{\phi}$ is trivial if that of $\phi$ is trivial. 
Hence, Theorem \ref{thm:mythm1} is deduced from the following theorem.
\begin{thm}
\label{them:reduction}
	Let $E$ be a finite set containing $0, 1$ and $\infty$ and $X$ be a compact bordered Riemann surface with basepoint $p_0$ which is not simply connected.
	Let $\phi : \overline{X}\times (E\cup\overline{\Delta}_{r}\cup\Delta_{R}^{c})\to \widehat{\mathbb{C}}$ be a holomorphic motion of $E\cup\overline{\Delta}_{r}\cup\Delta_{R}^{c}$ over $\overline{X}$, the closure of $X$.
	Suppose that $\phi (p, z)=z$ for $(p, z)\in \overline{X}\times (\overline{\Delta}_{r}\cup\Delta_{R}^{c})$ and the monodromy of $\phi$ is trivial.
	Then, there exists a holomorphic motion $\widehat{\phi}$ of $\widehat{\mathbb{C}}$ over $\overline{X}$ which extends $\phi$.
\end{thm}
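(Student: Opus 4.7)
The plan is to upgrade the real-analytic quasiconformal motion furnished by Proposition~\ref{Pro:tameQC} to an honest holomorphic motion, exploiting the compact bordered structure of $\overline X$ together with the triviality of $\phi$ on the disks $\overline\Delta_r$ and $\Delta_R^c$. First I would apply Proposition~\ref{Pro:tameQC} to obtain a real-analytic map $\Phi : \overline X \to M(\mathbb{C})$ with $w^{\Phi(p)}(z) = \phi(p, z)$ for $(p, z) \in \overline X \times (E \cup \overline\Delta_r \cup \Delta_R^c)$. Because $\phi(p, \cdot)$ is the identity on $\overline\Delta_r \cup \Delta_R^c$, the Beltrami equation forces $\Phi(p)$ to vanish a.e.\ there, so $\Phi(p)$ is supported in the annulus $A := \{r < |z| < R\}$ and is real-analytic on $S_0 := A \setminus E$, a bordered planar Riemann surface of finite type bounded by the analytic circles $\partial\Delta_r, \partial\Delta_R$.

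Next I would set up the correct moduli-theoretic target. On its boundary the quasiconformal map $w^{\Phi(p)}$ is the identity, so $p \mapsto w^{\Phi(p)}|_{S_0}$ gives a continuous family of points in the reduced Teichm\"uller space $Teich^{\#}(S_0)$, and hence, via $\widehat\Pi$, in $Teich(\widehat S_0)$, the Teichm\"uller space of the double $\widehat S_0$. By Proposition~\ref{Pro:universal} applied to the universal cover $\tilde X$, I obtain a holomorphic map $\tilde F : \tilde X \to T(E)$; triviality of the monodromy makes $\tilde F$ descend to a holomorphic $F : \overline X \to T(E)$. Composing with the natural holomorphic forgetful maps $T(E) \to Teich^{\#}(S_0) \to Teich(\widehat S_0)$ coming from Lieb's isomorphism (Proposition~\ref{Pro:Lieb}) and the doubling construction, I obtain a holomorphic map $\widehat F : \overline X \to Teich(\widehat S_0)$.

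The main step, which I expect to be the principal obstacle, is to lift $\widehat F$ to a holomorphic family of symmetric Beltrami coefficients on $\widehat S_0$. A global holomorphic section of $\mathrm{Belt}(\widehat\Gamma_0) \to Teich(\widehat S_0)$ fails to exist once $\dim Teich(\widehat S_0) > 1$ (see the remark following Theorem~\ref{thm:old}), so merely composing with the Douady--Earle section $\sigma_{\widehat S_0}$ returns only real-analytic dependence. I would first replace $\Phi(p)$ on $S_0$ by the symmetric representative supplied by Proposition~\ref{Pro:symmetricQC}, so as to work entirely on the closed doubled surface $\widehat S_0$. Then, leveraging compactness of $\overline X$ (so that $\widehat F(\overline X)$ lies in a relatively compact region of $Teich(\widehat S_0)$) together with triviality of the monodromy, I would patch together local holomorphic sections of Bers-embedding type over a neighborhood of $\widehat F(\overline X)$ into a globally defined holomorphic map $\widehat\mu : \overline X \to \mathrm{Belt}(\widehat\Gamma_0)$ realizing $\widehat F$. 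Symmetry lets me descend $\widehat\mu$ to a Beltrami coefficient on $A$, which I extend by $0$ to the rest of $\widehat{\mathbb{C}}$.

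Finally, define $\widehat\phi(p, z) := w^{\widehat\mu(p)}(z)$. Holomorphic parameter dependence of normalized solutions of the Beltrami equation (Ahlfors--Bers) gives that $\widehat\phi$ is holomorphic in $p$, and injectivity in $z$ is automatic because each $w^{\widehat\mu(p)}$ is a quasiconformal self-map of $\widehat{\mathbb{C}}$. Since $\widehat\mu(p)$ and $\Phi(p)$ project to the same point of $T(E)$ under $P_E$, the normalized quasiconformal maps $w^{\widehat\mu(p)}$ and $w^{\Phi(p)}$ coincide on $E$, so $\widehat\phi(p, z) = \phi(p, z)$ on $E$; agreement on $\overline\Delta_r \cup \Delta_R^c$ is immediate from the vanishing of $\widehat\mu(p)$ there. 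This yields the desired holomorphic motion of $\widehat{\mathbb{C}}$ over $\overline X$ extending $\phi$.
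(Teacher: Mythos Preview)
Your proposal has a genuine gap at the step you yourself flag as ``the principal obstacle'': lifting the holomorphic map $\widehat F : \overline X \to Teich(\widehat S_0)$ to a holomorphic map into $\mathrm{Belt}(\widehat\Gamma_0)$. You propose to ``patch together local holomorphic sections of Bers-embedding type'' using compactness and trivial monodromy, but this is not an argument --- the obstruction to patching local sections is a \v{C}ech cocycle with values in the fiber, and the triviality of the monodromy of $\phi$ (a statement about mapping classes) does not kill it. In fact, the statement that every holomorphic map from a Riemann surface to $Teich(S)$ lifts holomorphically to $\mathrm{Belt}(\Gamma)$ is exactly Theorem~\ref{Mythm:SectionRiemann} of this paper, and its proof goes through Corollary~\ref{Cor:connected case}, hence through Theorems~\ref{thm:mythm1} and~\ref{thm:mythm2}, hence through the very Theorem~\ref{them:reduction} you are trying to prove. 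So the approach is circular.

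There are secondary issues as well. Even granting a holomorphic lift $\widehat\mu : \overline X \to \mathrm{Belt}(\widehat\Gamma_0)$, nothing forces it to be symmetric under the involution $J_{S_0}$; Proposition~\ref{Pro:symmetricQC} produces a symmetric representative pointwise via Douady--Earle, but that representative varies only real-analytically, not holomorphically, so you cannot simultaneously have holomorphic dependence and symmetry without further work. And the claim that $\widehat\phi(p,\cdot)$ is the identity on $\overline\Delta_r\cup\Delta_R^c$ because $\widehat\mu(p)$ vanishes there is wrong: vanishing Beltrami coefficient gives conformality on those regions, not the identity. The paper's proof avoids all of this by a completely different route --- it adapts Slodkowski's radial-structure technique: cut $X$ along analytic arcs to a simply connected $X'$, build radial arcs $\ell(\xi,\zeta)$ in the annulus via the quasiconformal motion of Proposition~\ref{Pro:tameQC}, set up a Banach-space ODE on a function class $\mathcal F^\zeta\subset A(X')$ whose flow produces the holomorphic extension, and finally check that the construction glues across the cuts using Lemma~\ref{lemma:RadialStructure}(6).
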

\section{Proof of Theorem \ref{them:reduction}}
The proof of Theorem \ref{them:reduction} is done by several steps.
We put $E=\{0, 1, \infty, z_1, \dots , z_n\}$ and $R, r$ are defined by (\ref{eqn:radius}).

\medskip

\textbf{Step 1: Construction of a simply connected Riemann surface.} 
　
We take a set of analytic simple closed curves and arcs in $X$, say $\{\alpha_{1}, \alpha_{2}, \dots , \alpha_{k}\}$, having the following properties (see {\sc{Figure}} \ref{Fig1}).

\begin{enumerate}
\renewcommand{\labelenumi}{(\roman{enumi})}
  \item $\sharp (\alpha_{i}\cap\alpha_{j})\leq 1$ $(i\not=j)$ and $\sharp (\alpha_{i}\cap\partial X)\leq 2$ $(i, j=1, 2, \dots , k)$;
  \item if $\alpha_{i}\cap\alpha_{k}\not=\emptyset$ or $\alpha_i\cap\partial X \not=\emptyset$, then both curves cross each other perpendicularly;
  \item $X' := X\setminus \cup_{i=1}^{k}\alpha_{i}$ is connected and simply connected;
  \item $X'\ni p_0$.
\end{enumerate}

\begin{figure}[H]
\centering
\includegraphics[width=8cm]{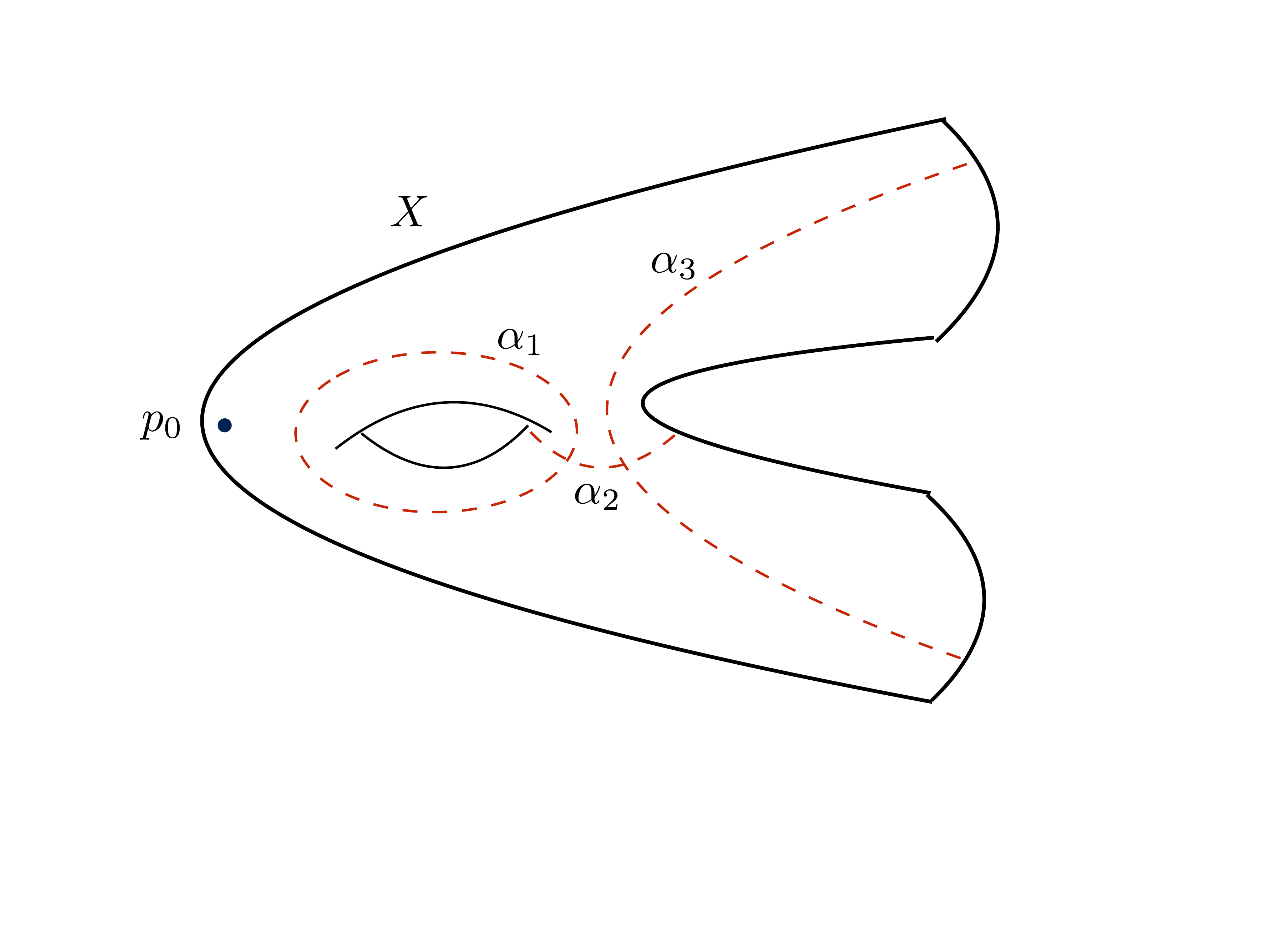}
\caption{}
\label{Fig1}
\end{figure}

The existence of such a system of curves and arcs is known; e.~g., use Green lines on $X$ of Green's function with pole at $p_0$ (cf.\ \cite{Sario1970}).
We denote by $\{q_1, q_2, \dots , q_m\}$ the set of all intersection points $\alpha_{i}\cap\alpha_{j}$ and $\alpha_{i}\cap\partial X$ $(i, j =1, 2, \dots , k)$.
Since $X'$ is a simply connected Riemann surface, we have a Riemann map $\varphi : \Delta \to X'$ with $\varphi (0)=p_0$.
The map $\varphi$ can be continuously extended to $\partial \Delta$. 
Moreover, since $\partial X'$ is piecewise analytic, $\varphi$ has an analytic continuation across $\partial\Delta$  except
$\{\xi_1, \xi_2, \dots , \xi_{m'}\}:= \cup_{i=1}^{m} \varphi^{-1}(q_{i})$, and $\varphi|\partial\Delta$ is a two to one map except for $\xi_{i}$ $(i=1, 2, \dots , m')$.
We distinguish points on $\partial X'$ according to images of $\varphi$.
Namely, if $\partial X'\ni q=\varphi (\xi_1)=\varphi(\xi_2)$ for $\xi_1\not=\xi_2$, we consider $q_{\xi_i}\in \partial X'$ as $\lim_{\Delta\ni \lambda \to \xi_i}\varphi(\lambda )$ $(i=1, 2)$ and they are different ({\sc{Figure}} \ref{Fig:disitinct}).
Hence, $\varphi$ becomes a homeomorphism from $\overline{\Delta}$ onto $\overline{X'}=X'\cup\partial X'$.

\begin{figure}
\centering
\includegraphics[scale=.65, bb=100 200 700 475]{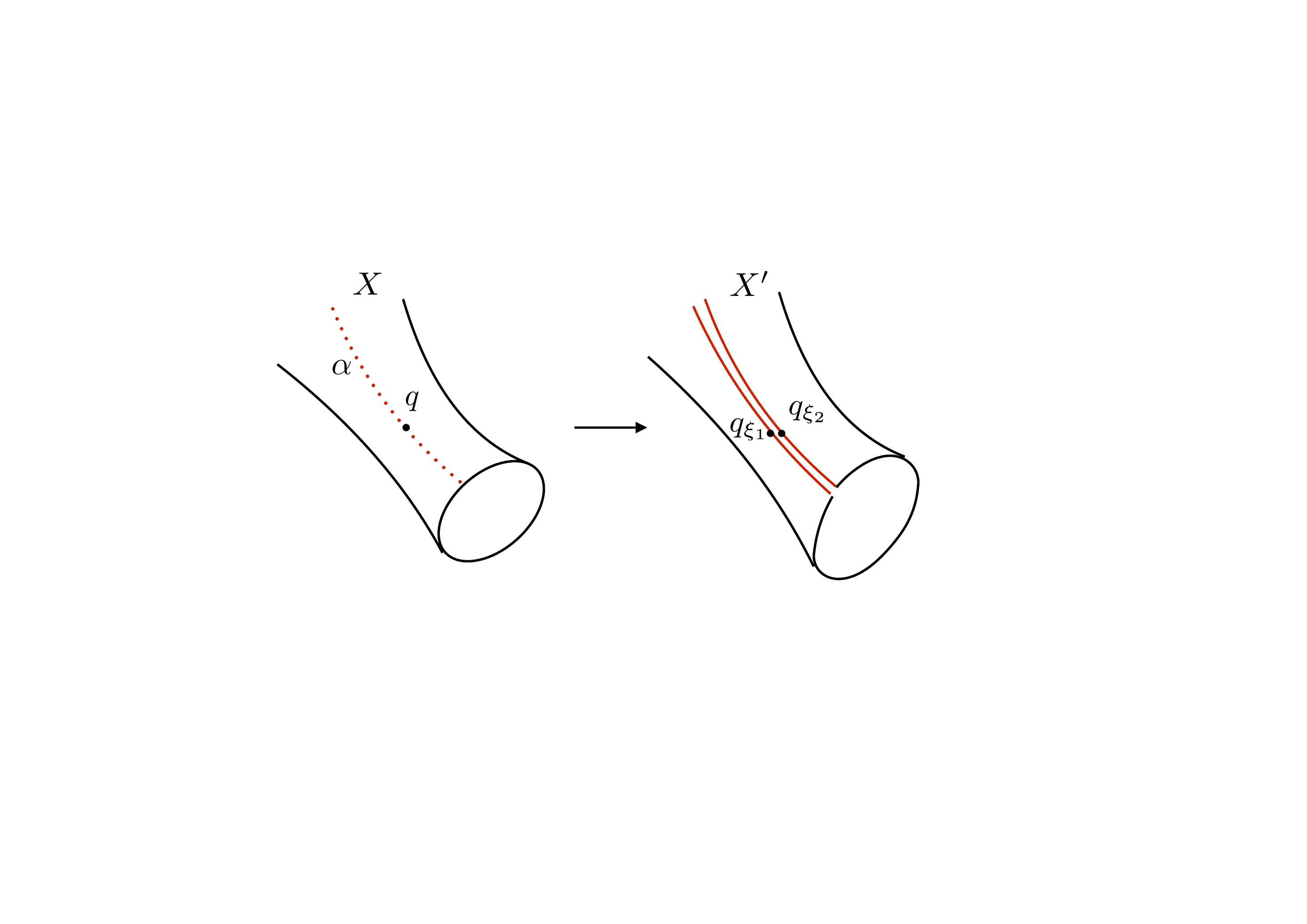}
\caption{}
\label{Fig:disitinct}
\end{figure}

Since the angle of $\partial X'$ at each $q_i$ is $\pi/2$, we have
\begin{equation}
  \label{eqn:Holder}
  |\varphi (\lambda )-\varphi (\xi_{j})|=O(|\lambda -\xi_{j}|^{1/2}) \quad \textrm{as }  \lambda \to \xi_{j}
\end{equation}
for every $\xi_j\in\cup_{i=1}^{m} \varphi^{-1}(q_i)$ and a local coordinate at $q_i$ $(i=1, 2, \dots , m)$ (cf.\ \cite{Pommerenke1991}).

\medskip

\textbf{Step 2: Radial structures.}

	Let $\phi : \overline{X}\times (E\cup\overline{\Delta}_{r}\cup\Delta_{R}^{c})\to \widehat{\mathbb{C}}$ be a holomorphic motion of $E\cup\overline{\Delta}_{r}\cup\Delta_{R}^{c}$ over $\overline{X}$ as in Theorem \ref{them:reduction}.
	We consider a new holomorphic motion $\phi ':\overline X\times (E\cup\{|z|\leq 2r\}\cup\{|z|\geq R-1\})\to \widehat{\mathbb{C}}$ by
	\begin{equation*}
		\phi ' (p, z)=
		\begin{cases}
			\phi (p, z), & z\in E\\
			z, & z\in \{|z|\leq 2r\}\cup\{|z|\geq R-1\}.
		\end{cases}
	\end{equation*}

As the monodromy of $\phi$ is trivial, the monodromy of $\phi '$ is also trivial; it follows from Proposition \ref{Pro:tameQC} that there exists a real analytic map $\Phi : \overline{X}\to M(\mathbb{C})$ such that ${\phi}_0 : \overline{X}\times \widehat{\mathbb{C}}\to \widehat{\mathbb{C}}$ defined by
\begin{equation}
  {\phi}_{0}(p, z)=w^{\Phi(p)}(z)
\end{equation}
extends $\phi '$. 
From Proposition \ref{Pro:tameQC} (2), $\phi_0 (p, \cdot)$ is real analytic on $\{2r<|z|<R-1\}\setminus E$. 
Since $E$ is a finite set, $\phi_0 (p, \cdot)$ is real analytic on $\{2r<|z|<R-1\}$.
We may modify the map to have a smooth map on $\{r\leq |z|\leq R\}$ which extends $\phi_0$; we give a concrete construction of the extension.

For each $p\in \overline X$, we put $S_{p}=\{2r<|z|<R-1\}\setminus \{\phi (p. z_1), \dots , \phi (p, z_n)\}$.
The domain $S_p$ is a Riemann surface of type $(0; n, 2)$ and a map
$f_p:=\phi_{0}(p, \cdot)|S_{p_0}$ is a quasiconformal map from $S_{p_0}$ to $S_p$. 
From Proposition \ref{Pro:symmetricQC}, there exists a real analytic symmetric quasiconformal map $e(f_p) : S_{p_0}\cup\partial S_{p_0}\to S_p\cup\partial S_{p}$ which determines the same point in the reduced Teichm\"uller space $Teich^{\#}(S_{p_0})$ as $f_p$.
In particular, $e(f_p)(z)=\phi (p, z)$ for any $w\in E\cap S_{p_0}$.
Furthermore, it follows from the symmetricity that $e(f_p)(e^{i\theta}[2r, (R-1)])$ is orthogonal to $\{|z|=2r\}$ and $\{|z|=R-1\}$ for any $e^{i\theta}\in \partial\Delta$, where $e^{i\theta}[2r, (R-1)]$ is the closed line segment between $2re^{i\theta}$ and $(R-1)e^{i\theta}$. 
However, $e(f_p)|\partial S_{p_0}$ may not be the identity.
We extend the map to $\{R-1\leq |z|\}$ by the following way.

Put $e(f_p)((R-1)e^{i\theta}))=(R-1)e^{i\Theta_p(\theta)}$ $(\Theta_p(\theta)\in [0, 2\pi))$ for $\theta \in [0, 2\pi)$.
$\Theta_p$ is a homeomorphism from $[0, 2\pi)$ onto $[\Theta_p (0), \Theta_p (0)+2\pi)$.
We define a quadratic polynomial $g_p^{\theta}$ for each $\theta\in [0, 2\pi)$ by
$$
g_p^{\theta}(x)=A_p (x-\log (R-1))^2+\Theta_p(\theta),
$$
where $A_p=(\theta-\Theta_p (\theta))(\log R-\log (R-1))^{-2}$; we see $g_p^{\theta} (\log (R-1))=\Theta_p(\theta)$, $(g_{p}^{\theta})'(\log (R-1))=0$ and $g_p^{\theta} (\log R)=\theta$.
Then, we define a map $\phi_1 (p, w)$ for $(p, z)\in \overline{X}\times \{2r\leq |z|\}$ by
\begin{equation*}
	\phi_1 (p, z)=
	\begin{cases}
		z, & (|z|>R) \\
		e(f_p)(z), & (2r\leq |z|<R-1) \\
		|z|\exp \{ig_{p}^{\theta}(\log |z|)\}, & (R-1\leq |z|\leq R, \arg z=\theta\in [0, 2\pi)).
	\end{cases}
\end{equation*}
It is not hard to see that $g_{p}^{\theta}(x)\not= g_{p}^{\theta '}(x)$ if $\theta\not=\theta '$.
Hence, the map $\phi_1$ is a diffeomorphism from $\overline X\times \{2r\leq |z|\leq R\}$ onto itself. 
In particular, $\phi_1 (p, \cdot)$ is quasiconformal and $\phi_1 (p, e^{i\theta}((R-1), R))$ is orthogonal to $\{|z|=R-1\}$ at $w=e(f_p)((R-1)e^{i\theta})$.

Furthermore, there exist $\epsilon >0$ and $c >0$ such that for any $(p, \theta)\in \overline X\times [0, 2\pi)$, the image of $\{ z=\rho e^{i\theta}\mid R-c <\rho \leq R\}$ via $\phi_1(p, \cdot)$ is in the Stolz region at $Re^{i\theta}$ for $\epsilon$.
Namely, 
\begin{equation*}
	\frac{R-\rho}{|Re^{i\theta}-\phi_1 (p, \rho e^{i\theta})|}>\epsilon
\end{equation*}
holds if $R-c < \rho\leq R$.

We may extend $\phi$ to $\overline X\times \{r\leq |z|\leq 2r\}$ by a similar way. 
We denote the extended map also by $\phi_1$. 
Then, at $re^{i\theta}$, $\phi_1 (\{ z=\rho e^{i\theta}\mid r\leq \rho <r+c\} )$ is in the "Stolz region" for some $\epsilon>0$;
\begin{equation*}
	\frac{\rho - r}{|re^{i\theta}-\phi_1 (p, \rho e^{i\theta})|}>\epsilon
\end{equation*}
holds if $r \leq \rho < r+c$.

\medskip

Thus, we have the map $\phi_1$ having the following properties:
\begin{enumerate}
\renewcommand{\labelenumi}{(\alph{enumi})}
  \item $\phi_{1} : \overline{X}\times \widehat{\mathbb{C}}\to \widehat{\mathbb{C}}$ is a quasiconformal motion over $\overline{X}$, that is, a continuous family of quasiconformal maps parametrized by $\overline{X}$; 
  \item for each $p \in \overline X$ and $\zeta\in\partial\Delta_{R}$, $\phi_{1}(p, \zeta [r/R, 1])$ is a differentiable arc, where $\zeta [a, b]$ $(0<a<b)$ is the closed line segment from $\zeta a$ to $\zeta b$;
  \item $\phi_{1}(p, z)=\phi (p, z)$ for $(p, z)\in \overline{X}\times E$;
  \item $\phi_1(p, z)=z$ if $z\in \Delta_R\setminus\overline{\Delta_r}$. 		
\end{enumerate}
%The construction of $\phi_{1}$ from $\phi_{0}$ is not difficult, e.~g., it is done by replacing $\phi_{0}$  standard \lq affine\rq  maps on annuli near $\{|w|=r\}$ and $\{|w|=R\}$ for (b) (see also \cite{Slodkowski1995} p.193).

Naturally, the map $\phi_1$ is considered in $\overline {X'}\times \widehat{\mathbb C}$.
The set of simple arcs $\ell({\xi}, {\zeta}):={\phi}_{1}(\xi, \zeta I)$ for $I =[0, 1]\subset \mathbb{R}$ $(\xi\in\partial X', \zeta\in\partial \Delta_{R})$ is called the \lq\lq radial structure\rq\rq in \cite{Slodkowski1995}.
\begin{lemma}
\label{lemma:RadialStructure}
	$\{\ell (\xi, \zeta)\mid \xi\in \partial X', \zeta\in \partial\Delta_{R} \}$ has the following properties.
\begin{enumerate}
  \item $\ell (\xi, \zeta)$ is a differentiable arc connecting $0$ and $\zeta$;
%that is, $\partial \Delta_{R}\times\partial X'\times [0, 1]\ni (\zeta, \xi, s)\mapsto {\phi}_{1}(\xi, \zeta s)=w^{\Phi(\xi)}(\zeta s)$ is piecewise real analytic;
  \item $\cup_{|\zeta|=R}\ell (\xi, \zeta)=\overline{\Delta}_{R}$ for each $\xi\in\partial X'$;
  \item $\ell (\xi, \zeta)\cap\ell (\xi, \zeta')=\{0\}$ if $\zeta\not=\zeta'$;
  \item there exist $\zeta_{1}, \dots , \zeta_{n}$ on $\partial \Delta_{R}$ such that $\phi (\xi, z_{i})\in  \ell (\xi, \zeta_{i})$ $(i=1, \dots , n)$ for every $\xi\in \partial X'$;
  \item for any $\xi\in \partial X'$, $\ell (\xi, \zeta)\cap\overline{\Delta}_{r}=\zeta [0, r]$, the line segment from $0$ to $r\zeta$;
  \item Let $\xi ', \xi ''$ be two distinct points on $\partial X'$ corresponding to the same point $\xi$ in $X$. Then, $\ell (\xi ', \zeta)=\ell (\xi'', \zeta)$ for any $\zeta\in\partial \Delta_{R}$;
  \item there exist $\epsilon>0$ and $c>0$ such that the images of $\ell (\xi, \zeta) \cap\{|\zeta-z|<c\}$ and $\ell (\xi, \zeta) \cap\{|r\zeta/R-z|<c \}$ via $\phi_1(\xi, \cdot)$ are in the Stoltz regions at $\zeta$ and $r\zeta/R$ for $\epsilon$, respectively.
\end{enumerate}

\end{lemma}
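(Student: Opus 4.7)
The plan is to derive each of the properties (1)--(7) essentially by unpacking the explicit construction of $\phi_1$ carried out in Step 2. The key structural facts I would use are: $\phi_1(\xi, \cdot)$ is a diffeomorphism of $\widehat{\mathbb{C}}$ which is the identity on $\{|z| \le r\} \cup \{|z| \ge R\}$, agrees with the symmetric extension $e(f_\xi)$ of $\phi_0(\xi,\cdot)$ on the core annulus $\{2r \le |z| \le R-1\}$, and is interpolated by the quadratic twist $g_\xi^\theta$ on the two collar annuli so that radial segments are carried to smooth arcs meeting $\partial\Delta_r$ and $\partial\Delta_R$ orthogonally.

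Properties (1), (2), (3), (5), and (6) would then fall out almost tautologically. Differentiability of $\ell(\xi,\zeta)$ is piecewise clear, with $C^1$-matching at the junction circles $|z|=r,\,2r,\,R-1,\,R$ guaranteed by the vanishing derivative $(g_\xi^\theta)'(\log(R-1))=0$ (and the analogous condition on the inner collar) together with the real analyticity of $e(f_\xi)$ on the interior annulus; the endpoints $0$ and $\zeta$ are fixed by $\phi_1(\xi,\cdot)$. Surjectivity onto $\overline{\Delta}_R$ and pairwise disjointness of the $\ell(\xi,\zeta)$ away from the origin are immediate from $\phi_1(\xi,\cdot)$ being a bijection of $\overline{\Delta}_R$ that fixes $0$. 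Property (5) follows since $\phi_1(\xi,z)=z$ on $\overline{\Delta}_r$, so $\ell(\xi,\zeta)\cap\overline{\Delta}_r$ is the unperturbed segment in the direction of $\zeta$. Property (6) is a book-keeping observation: $\phi_1$ was defined on $\overline{X}\times\widehat{\mathbb{C}}$, so boundary points of $X'$ that project to the same point of $X$ automatically give the same slice and hence the same arcs.

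For (4), I would simply take $\zeta_i:=Rz_i/|z_i|$ (with $\zeta_i$ chosen arbitrarily if $z_i=0$, since then the claim reduces to (1)). Then $z_i$ lies on $\zeta_iI$, and because $\phi_1$ agrees with $\phi$ on $E$ by property (c) of Step 2, one obtains $\phi(\xi,z_i)=\phi_1(\xi,z_i)\in\phi_1(\xi,\zeta_iI)=\ell(\xi,\zeta_i)$, with $\zeta_i$ independent of $\xi$. The only statement that requires genuine work is (7): here I would invoke the two Stolz-type inequalities
\[
\frac{R-\rho}{|Re^{i\theta}-\phi_1(\xi,\rho e^{i\theta})|}>\epsilon,\qquad \frac{\rho-r}{|re^{i\theta}-\phi_1(\xi,\rho e^{i\theta})|}>\epsilon,
\]
already established for $R-c<\rho\le R$ and $r\le\rho<r+c$ at the end of Step 2, and translate them through the identification $\ell(\xi,\zeta)=\phi_1(\xi,\zeta I)$. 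The main obstacle is the uniformity of $\epsilon$ and $c$ in $\xi\in\partial X'$; this follows from the compactness of $\overline{X}$ together with the continuous dependence on $\xi$ of $\Theta_\xi$, and hence of the coefficient $A_\xi=(\theta-\Theta_\xi(\theta))(\log R-\log(R-1))^{-2}$ controlling the bounds on $g_\xi^\theta$.
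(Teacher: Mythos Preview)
Your proposal is correct and follows essentially the same route as the paper: each item is read off directly from the explicit Step~2 construction of $\phi_1$. The paper likewise declares (2), (3), (7) obvious from the construction, handles (5) via $\phi_1(\xi,\cdot)=\mathrm{id}$ on $\overline{\Delta}_r$, proves (4) with the identical choice $\zeta_i=Rz_i/|z_i|$, and disposes of (6) by the observation $\phi_1(\xi',\cdot)=\phi_1(\xi,\cdot)=\phi_1(\xi'',\cdot)$. The one place your argument differs is (1): you argue piecewise smoothness plus $C^1$-matching at the junction circles using the vanishing of $(g_\xi^\theta)'$ at $\log(R-1)$ and the orthogonality of $e(f_\xi)$, whereas the paper instead appeals to the real analyticity of $w^{\Phi(p)}(z)$ in $z$ coming from Proposition~4.1(2). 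Your version is arguably more on point, since $\ell(\xi,\zeta)$ is defined through $\phi_1$ rather than through $\phi_0=w^{\Phi}$; in any case both arguments are aiming at the same differentiability conclusion. One small remark: your parenthetical about $z_i=0$ in (4) is unnecessary, since the $z_i$ are distinct from $0,1,\infty$ and satisfy $|z_i|\ge 3r>0$ by the definition of $r$.
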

\begin{proof}
Obvious are (2), (3) and (7) from the construction. 
The statement (5) is also easily verified because $\phi_{1}(\xi, z)=z$ if $z\in \Delta_{r}$.

Taking $\zeta_{i}=z_{i}R/|z_{i}|$ $(i=1, \dots , n)$, we see that $\phi (\xi, z_{i})={\phi}_1(\xi, z_{i})={\phi}_1(\xi, \zeta_{i}(|z_{i}|/R))\in \ell (\xi, \zeta_{i})$. 
Hence we have (4). Now, we show (1).

Since $\Phi : \overline{X} \to M(\mathbb{C})$ is real analytic, $\overline{X}\ni x\mapsto w^{\Phi (x)}(z)\in \widehat{\mathbb{C}}$ is real analytic on $\overline{X}$ for each $z\in \mathbb{C}$.
We also see from Proposition \ref{Pro:tameQC} (2) that $\Phi (x)\in M(\mathbb{C})\subset L^{\infty}(\mathbb{C})$ is real analytic in $\mathbb{C}\setminus (E\cup\partial \Delta_{R})$.
Therefore, $w^{\Phi (x)}(z)$ is real analytic with respect to $z$ in $\mathbb{C}\setminus (E\cup\partial \Delta_{R})$ (see \cite{Nag1988}).
Furthermore, $w^{\Phi(x)}$ is real analytic at every point of $E$ since $E$ is a finite set; we conclude that $w^{\Phi(x)}$ is real analytic except $\partial \Delta_{R}$.
Thus, we have shown (1).

The statement (6) is trivial because $\phi_{1} (\xi', \cdot)=\phi_1 (\xi, \cdot)= \phi_{1}(\xi '', \cdot )$.

\end{proof}

\textbf{Step 3: A function space.}

For a compact bordered Riemann surface $S$, let denote by $A_{S}$ the space of holomorphic functions on a some neighborhood $U$ of $\overline{S}$. 
We define $A(X')$ by $A_{\Delta}\circ\varphi^{-1}$, where $\varphi$ is a Riemann map given in Step 1. Note that $X'$ is simply connected and $X'\cup\partial X'$ is homeomorphic to $\overline{\Delta}$ via the Riemann map $\varphi :\Delta\to X'$.

For each $\zeta\in \partial\Delta_{R}$, we define
\begin{equation}
\label{dfn:F}
  \mathcal{F}^{\zeta}=\{f\in A(X ')\mid f(\xi)\in \ell (\xi, \zeta) \textrm{ for every } \xi\in \partial X' \},
\end{equation}
and
\begin{equation}
  \mathcal{F}=\cup_{\zeta\in \partial\Delta_{R}}\mathcal{F}^{\zeta}.
\end{equation}
Functions $f\equiv 0$, $\equiv \zeta$ and $\equiv r\zeta/|\zeta|$ are contained in $\mathcal{F}^{\zeta}$ for every $\zeta\in \partial \Delta_{R}$; it follows from Lemma \ref{lemma:RadialStructure} (4) that $\phi (\varphi (\cdot) , \zeta_{i}) \in \mathcal{F}^{\zeta_{i}}$ $(i=1, \dots, n)$.

We denote by $C^{\frac{1}{2}}(\partial \Delta)$ the space of complex valued H\"older continuous functions of exponent $\frac{1}{2}$, and by $C^{\frac{1}{2}}_{\mathbb{R}}(\partial \Delta)$ its subspace of all real valued functions.

One may find similar statement to the following lemma in  \cite{Slodkowski1995} Lemma 1.~3; we give a different and an elementary proof. 

\begin{lemma}
	\label{lemma:Fa-class}
%	There are constants $\varepsilon >0$ and $K>0$ such that every $g\in \mathcal{F}$ has a holomorphic extension $\tilde{g}\in A_{
%	\Delta_{1+\varepsilon}}$ and $\|\tilde{g}\|_{\infty}\leq K$. 
$\mathcal{F}^{\zeta}\circ\varphi$ and $\mathcal{F}\circ\varphi$ are regarded as compact subsets of $C^{\frac{1}{2}}(\partial \Delta)$ by considering boundary values.
Moreover, the followings hold:
	\begin{enumerate}
  \item If $g\in \mathcal{F}^{\zeta}$ and $\|g\|_{\infty}=R$, then $g(p)\equiv \zeta$; 
  \item if $g\in \mathcal{F}^{\zeta}$ and $|g(p)|\leq r$ for some $p\in \overline{X}$, then $g$ is a constant function.
   \end{enumerate}

\end{lemma}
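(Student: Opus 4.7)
The plan is to establish the three claims in order.

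For the H\"older $1/2$-regularity and compactness of $\mathcal{F}^{\zeta}\circ\varphi$ in $C^{\frac{1}{2}}(\partial\Delta)$, I would first note that every $f\in\mathcal{F}^{\zeta}$ satisfies $|f|\le R$ on $\overline{X'}$ by the maximum modulus principle, since $f(\partial X')\subset\ell(\cdot,\zeta)\subset\overline{\Delta}_R$. Away from the corner preimages $\xi_1,\dots,\xi_{m'}$ the Riemann map $\varphi$ admits an analytic continuation across $\partial\Delta$, so $f\circ\varphi$ is smooth there with an equicontinuous derivative (via Cauchy's estimate on a common $X$-neighborhood of $\overline{X'}$ on which every $f$ extends holomorphically with a uniform bound, obtainable by reflecting across the analytic boundary arcs of $X'$). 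Near each $\xi_j$, the estimate (\ref{eqn:Holder}) gives $|\varphi(\lambda)-\varphi(\xi_j)|=O(|\lambda-\xi_j|^{1/2})$; combining with the Lipschitz bound on $f$ near $q_j=\varphi(\xi_j)$, the composition $f\circ\varphi$ is H\"older $1/2$ near $\xi_j$ with a uniform constant. Arzel\`a--Ascoli then yields precompactness in $C^{\frac{1}{2}}(\partial\Delta)$, and closedness follows because the constraint $f(\xi)\in\ell(\xi,\zeta)$ is preserved by uniform limits (the map $\xi\mapsto\ell(\xi,\zeta)$ being continuous and each $\ell(\xi,\zeta)$ closed).

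For (1), suppose $\|g\|_{\infty}=R$. If the maximum of $|g|$ is attained at an interior point $p\in X'$, the maximum modulus principle gives $g\equiv g(p)$, and the boundary constraint forces this constant to equal $\zeta$. Otherwise the maximum is attained only at some $\xi_0\in\partial X'$, and I claim $g(\xi_0)=\zeta$: indeed $\phi_1(\xi_0,\cdot)$ is a homeomorphism of $\widehat{\mathbb{C}}$ that fixes $\partial\Delta_R$ pointwise and sends $\Delta_R$ to itself, so the arc $\ell(\xi_0,\zeta)=\phi_1(\xi_0,[0,\zeta])$ meets $\partial\Delta_R$ only at $\zeta$. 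To deduce $g\equiv\zeta$ globally, I would pass to the Riemann map, write $h=g\circ\varphi$ and $\lambda_0=\varphi^{-1}(\xi_0)$, and analyze the Taylor expansion $h(\lambda)-\zeta=c_k(\lambda-\lambda_0)^k+\cdots$ at the lowest nonvanishing order. For $\lambda\in\partial\Delta$ near $\lambda_0$, the direction of $h(\lambda)-\zeta$ is determined to leading order by $c_k$ alone, while the boundary constraint $h(\lambda)\in\ell(\varphi(\lambda),\zeta)$ forces this direction to match the tangent direction of $\ell(\varphi(\lambda),\zeta)$ at $\zeta$, which varies nontrivially with $\lambda$ (via $\Theta_{\varphi(\lambda)}(\theta)$ in the construction of Step~2) and is non-tangential to $\partial\Delta_R$ by Lemma \ref{lemma:RadialStructure}~(7). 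This incompatibility forces $c_k=0$ at every order, whence $h\equiv\zeta$ and $g\equiv\zeta$.

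For (2), assume $g\in\mathcal{F}^{\zeta}$ is nonconstant with $|g(p_0)|\le r$ for some $p_0\in\overline{X'}$, aiming for a contradiction. By Lemma \ref{lemma:RadialStructure}~(5), $\ell(\xi,\zeta)\cap\overline{\Delta}_r$ is the radial segment $[0,r\zeta/R]$, independent of $\xi$. Rotating by $R/\zeta$ we may assume $\zeta=R$ and the segment is $[0,r]\subset\mathbb{R}$. If $g(\partial X')\subset\overline{\Delta}_r$, then the boundary values of $g$ all lie on $[0,r]\subset\mathbb{R}$; pulling back by $\varphi$ and applying Schwarz reflection across $\partial\Delta$ yields a bounded entire function, constant by Liouville, contradicting nonconstancy. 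Otherwise, let $U$ be a connected component of $\{p\in\overline{X'}:|g(p)|<r\}$ with $p_0\in\overline{U}$; then $\partial U\ne\emptyset$, with $|g|=r$ on $\partial U\cap X'$ and $g\equiv r$ on $\partial U\cap\partial X'$ (since $r$ is the unique point of $\ell(\xi,\zeta)$ of modulus $r$). The plan is to show $\partial U\cap\partial X'$ contains an open arc of $\partial X'$; on such an arc $g-r$ vanishes, and Schwarz reflection across an analytic piece of $\partial X'$ extends $g-r$ holomorphically with a non-isolated zero, forcing $g\equiv r$ by the identity theorem, again contradicting nonconstancy. The main obstacle is ruling out the scenario that $\partial U$ meets $\partial X'$ only in isolated points or not at all, and it is here that Lemma \ref{lemma:RadialStructure}~(7) enters decisively: the non-tangential (Stolz region) approach of $\ell(\xi,\zeta)$ to $r$ at the break point $r\zeta/R$ prevents such isolated contact and ensures $U$ abuts $\partial X'$ along a full arc whenever it accumulates there.
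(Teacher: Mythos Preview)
Your argument for (1) has the right ingredient (the Stolz-cone condition from Lemma \ref{lemma:RadialStructure}~(7)) but the step ``this incompatibility forces $c_k=0$ at every order'' does not go through. Write $h=g\circ\varphi$ and $h(\lambda)-\zeta=c_k(\lambda-\lambda_0)^k+\cdots$. As $\lambda\to\lambda_0$ along the two sides of $\partial\Delta$, the argument of $h(\lambda)-\zeta$ tends to two values differing by $k\pi$; the Stolz-cone constraint (opening $<\pi$) rules out $k$ odd, but for even $k\ge 2$ the two limiting directions coincide and there is no directional mismatch to exploit. The paper closes the gap differently: once $k\ge 2$, the map $h$ is $k$-to-one near $\lambda_0$, so the image of the half-disk $\Delta\cap B(\lambda_0,\epsilon)$ sweeps an angle $k\pi\ge 2\pi$ around $\zeta$ and hence contains points of modulus $>R$, contradicting $\|g\|_\infty=R$. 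This covering step, not a vanishing-of-all-coefficients argument, is what finishes (1).

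Your sketch for (2) has a more serious gap. If the component $U$ of $\{|g|<r\}$ is compactly contained in $X'$ (so $\partial U\cap\partial X'=\emptyset$), your Schwarz-reflection plan never gets started; yet in this case the minimum modulus principle forces $g$ to have a zero in $U\subset X'$. The paper rules this out by a winding-number argument that uses the \emph{triviality of the monodromy}: for $x$ on the segment $(0,\zeta)$ the closed curve $\phi_1(\partial X',x)$ is null-homotopic in $\widehat{\mathbb C}\setminus\{0,1,\infty\}$, hence has winding number $0$ about $0$; sliding $g(\xi)$ to $\phi_1(\xi,x)$ along $\ell(\xi,\zeta)$ gives a homotopy (avoiding $0$) from $g(\partial X')$ to $\phi_1(\partial X',x)$, so $g(\partial X')$ also has winding number $0$, and the argument principle forbids an interior zero. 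This essential use of the monodromy hypothesis is missing from your outline. Even in the case where $U$ does meet $\partial X'$, your ``main obstacle'' is only asserted, not proved; the paper avoids the component argument altogether and instead shows that any boundary zero of $g\circ\varphi$ has even order (because $g\circ\varphi(\partial\Delta)\cap\Delta_r$ lies on the single ray $\zeta[0,r/R)$), then applies an elementary lemma (Lemma \ref{lemma:realpositive}) to force $\zeta^{-1}g\circ\varphi$ to be real and nonnegative on all of $\partial\Delta$, whence constant. The remaining cases $0<\min|g|<r$ and $\min|g|=r$ are then handled by the same angle/order-$\ge 2$ mechanism as in (1), applied at the boundary minimum.
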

\begin{proof}	
	Since $f\in A(X')$ is continuous on $X'\cup \partial X'$ and $\varphi : \Delta\cup\partial \Delta\to X\cup\partial X'$ is a H\"oler continuous function of exponent $\frac{1}{2}$ by (\ref{eqn:Holder}), we see that $f\circ\varphi\in A_{\Delta}$ belongs to $C^{\frac{1}{2}}(\partial \Delta)$. 
	Thus, $\mathcal{F}^{\zeta}\circ\varphi$ and $\mathcal{F}\circ\varphi$ are in $C^{\frac{1}{2}}(\partial \Delta)$.
	The compactness of those spaces is easily verified.

	Let $g\in\mathcal{F}^{\zeta}$ be a non-constant function with $\|g\|_{\infty}=R$. 
	From (\ref{dfn:F}) we see that $g\circ\varphi (\partial\Delta )\subset \cup_{\xi\in\partial X'}\ell (\xi, \zeta)$ and $g\circ\varphi (\partial\Delta )\cap\partial\Delta_{R}=\{\zeta\}$.	
	Hence, there exists $\lambda \in\partial \Delta$ such that $g(\varphi (\lambda ))=\zeta$.

	Now, we recall that $\ell (\xi, \zeta)$ is the image of the line segment $\zeta [0, 1]$ via a quasiconformal map $\phi_1 (\xi, \cdot)$ which keeps every point of $\partial\Delta_{R}$ fixed.
	From our construction of the radial structures, we see that the angle between $\ell (\xi, \zeta)$ and $\partial\Delta_{R}$ at $\zeta$ is greater than a positive constant $\epsilon$ which is independent of $\xi$ (Lemma \ref{lemma:RadialStructure} (7)). 
	Therefore, the angle between $g(\partial X)\subset \cup_{\xi\in\partial X'}\ell (\xi, \zeta)$ and $\partial\Delta_{R}$ at $\zeta$ is greater than $\epsilon>0$, and the angle of $g\circ\varphi (\partial\Delta )$ at $\zeta$ is less than $\pi-2\epsilon$. 
	This means that the order of $g\circ\varphi$ at $\lambda$ is greater than one since $g\circ\varphi$ is holomorphic on $\partial \Delta$. 
	Indeed, if the order is one, then $(g\circ \varphi)'(\lambda )\not=0$ and $g\circ\varphi$ is conformal at $\lambda$; the angle of $g\circ\varphi (\partial\Delta)$ at $
	\lambda$ has to be $\pi$.
	
	Thus, there exists a point $p'\in X$ near $p=\varphi^{-1}(\lambda)$ such that $|g(p')|>R$ and we have a contradiction.
	We complete the proof of (1).
	
	To show (2), we first assume that there exists a non-constant function $g\in \mathcal{F}^{\zeta}$ such that it has a zero in $X'$ but not on $\partial X'$.
	
	Since the monodromy of $\phi_{1}$ is trivial, we see that for every closed curve $\gamma$ in $\overline{X}$ and for every $x\in (0, \zeta)$, $\phi_{1}(\gamma, x)$ is freely homotopic to a trivial curve in $\widehat{\mathbb{C}}\setminus\{0, 1, \infty\}$. 
	In particular, the winding number of the curve $\phi_{1}(\partial X', x)\subset \cup_{\xi\in \partial X'}\ell (\xi, \zeta)$ around the origin is zero.
	
	Let us consider the winding number of a closed curve $g(\partial X')$ around the origin.
	Since the function $g$ belongs to $\mathcal{F}^{\zeta}$, both $g(\xi)$ and $\phi_{1}(\xi, x)$ are on the simple arc $\ell (\xi, \zeta)$ for every $\xi\in \partial X'$.
	By moving $g(\xi)$ to $\phi_{1}(\xi, x)$ along $\ell (\xi, \zeta)$, we have a homotopy between $g(\partial X')$ and $\phi_{1}(\partial X', x)$.
	Since $\phi_1 (\partial X' , x)$ does not pass the origin, we conclude that the winding number of $g(\partial X')$ around the origin is also zero.
	Hence, the holomorphic function $g$ has a zero in $X'$ by the argument principle and we have a contradiction.
	
	Therefore, if $g\circ\varphi$ has zeros in $\overline{\Delta}$, then it has a zero on $\partial \Delta$.
	Noting that 
	$\ell (\xi, \zeta)\cap\{|z|<r\}=\zeta[0, r/R)$ for every $\xi\in \partial X'$ and $\zeta\in \partial\Delta_R$, we see that
	$$
	g\circ \varphi (\partial\Delta)\cap\{|z|<r\}=\zeta[0, r/R].
	$$
	Hence, the order of any zero of $g\circ \varphi$ on $\partial\Delta$ is even.

	We note the following lemma:

	\begin{lemma}
	\label{lemma:realpositive}
		Let $f$ be a holomorphic function in a neighborhood of an analytic closed curve $\gamma$. Suppose that the order of any zeros of $f$ on $\gamma$ is even and there exist a point $z_0 \in \gamma$ and a neighborhood $U$ of $z_0$ such that $f(\gamma\cap U)\subset \mathbb{R}_{\geq 0}$. Then, $f(\gamma)\subset \mathbb{R}_{\geq 0}$. 
	\end{lemma}
	\begin{proof}
	We may assume that $\gamma=\mathbb{R}$ and $z_{0}=0$. Let $I$ be the maximal interval on $\mathbb{R}$ containing $0$ such that $f(z)\geq 0$ for every $z\in I$. 
	The interval $I$ is obviously a closed interval. 
	Suppose that $I=[a, b]$ $(a<0<b)$. If $f(a)=0$, then we have
	\begin{equation*}
		f(z)=c(z-a)^{2n}(1+\sum_{k=1}^{\infty}c_{k}(z-a)^{k})
	\end{equation*}
	near $z=a$ for some $n\in \mathbb{N}$ and a constant $c$. 
	Since $f(x)>0$ for $x>a$ sufficiently close to $a$, we have $c>0$.
	Thus, we see that $f(x)>0$ for $x<a$ sufficiently close to $a$. It contradicts the maximality of $I$.
	
	If $c_0:=f(a)>0$, then we have
	\begin{equation*}
		f(z)=c_0 + \sum_{k=1}^{\infty} c_{k}(z-a)^{k}
	\end{equation*}
	near $z=a$, say $(a-\epsilon, a+\epsilon)$ for some $\epsilon >0$.  
	Since $f(x)>0$ for $a <x < a+\epsilon$, we see that $c_1 = f'(a)\in \mathbb{R}$.
	We also verify that $f'(x)$ is real in $[a, a+\epsilon)$ because $f(x)$ is real there. 
	Thus, $c_2=f''(a)/2$ is real. 
	The same argument yields that $c_{k}$ is a real number for any $k\in \mathbb{N}$. 
	Hence, $f$ is real valued in a neighborhood of $a$ on $\mathbb R$.
	From the continuity of $f$, we conclude that $f(x)>0$ near $x=a$. This contradicts the maximality of $I$.
	
	The same argument works for $b$ and we conclude that $I=\gamma$.
	\end{proof}
	
	Applying Lemma \ref{lemma:realpositive} for $\gamma =\partial\Delta$ and $\zeta^{-1}(g\circ\varphi)$, we see that $\textrm{Im }\zeta^{-1}(g\circ\varphi)\equiv 0$ on $\partial\Delta$ and
	 we have a contradiction. Thus $g\equiv 0$.

Next, we suppose that $0<\min \{|g(p)|\mid p\in \overline{X'}\}<r$. 
Let $q_0$ be a point in $\overline{X'}$ with $|g(q_0)|=\min \{|g(p)|\mid p\in \overline{X'}\}$. 
Applying the maximum principle to $g^{-1}$, we verify that $q_0$ is on $\partial X'$. 
By the same argument as above, we see that the order of $g$ at $q_0$ is even. 
Hence, if $g$ is not a constant, then there exists a point $p'\in X'$ near $q_0$ such that $|g(p')|<|g(q_0)|$ and we have a contradiction.

Finally, we suppose that $\min \{|g(p)|\mid p\in \overline{X'}\}=r$. 
Let $q_0\in \overline{X'}$ be a point with $|g(q_0)|=r$.
By using the maximum principle again, we see that $q_0$ is on $\partial X'$.
Since $g\in \mathcal{F}^{\zeta}$, we have $g(q_0)=r\zeta/|\zeta|$.
We may use the same argument as in the proof of (1) for $g$ and for $\partial \Delta_{r}$.
Then, we conclude that the angle between $g(\partial X')$ and $\partial \Delta_{r}$ is strictly positive at $r\zeta/|\zeta|$ and the order of $g$ at $q_0$ is more that one. 
Hence, we see that there exists a point $p'\in X'$ near $q_0$ such that $|g(p')|<r=|g(q_0)|$.
It is a contradiction and we complete the proof of (2).
\end{proof}

\textbf{Step 4: Differential equations.}

Since $X'$ and $\partial X'$ are identified with $\Delta$ and $\partial\Delta$, respectively, we may discuss our argument in $\Delta\cup\partial\Delta$ instead of $X'\cup\partial X'$. 
In this step, we will make our discussion in $\Delta\cup\partial\Delta$ for the sake of simplicity.

For any $(\xi, z)\in \partial\Delta\times\overline{\Delta_{R}}(\simeq \partial X'\times \overline{\Delta_{R}})$, there exists a unique $\zeta\in \partial \Delta_{R}$ such that $\ell (\xi, \zeta)\ni z$.
Since $\ell (\xi, \zeta)\cap (\overline{\Delta_{R}}\setminus\Delta_{r})$ is differentiable, we may consider the unit tangent vector of $\ell (\xi, \zeta )$ at $z\in \ell (\xi, \zeta)$ with $r\leq |z|\leq R$, where we parametrize the curve $\ell (\xi, \zeta)$ by the length parameter from $\zeta\in \partial\Delta_R$.

We denote by $\tau (\xi, z)$ the unit tangent vector of $\ell (\xi, \zeta)$ at $z\in \ell(\xi, \zeta)$.
Hence, the parametrization $z(t)$ of $\ell (\xi, \zeta)$ satisfies a differential equation:
\begin{equation}
\label{eqn:ODEforEll}
	\frac{d}{dt}z(t)=\tau (\xi, z(t)).
\end{equation}

We see that there exists a differentiable map $\alpha : \partial \Delta \times (\overline{\Delta_{R}}\setminus \Delta_{r})\to \mathbb{R}$ such that
\begin{equation}
	\label{eqn:DefOfAlpha}
	\tau (\xi, z)=\frac{z}{|z|}e^{i\alpha(\xi, z)}.
\end{equation}

Indeed, for every $\zeta \in \partial \Delta_{R}$, $\textrm{arg }\tau (\xi, \zeta)\zeta^{-1}$ is in $(-\pi/2. \pi/2)$. 
Thus, a map 
\begin{equation*}
	\partial\Delta \times \partial \Delta_{R}\ni (\xi, \zeta)\mapsto \zeta^{-1}\tau (\xi, \zeta)/R\in \partial \Delta
\end{equation*}
is homotopic to a constant map.
A continuity argument guarantees us the existence of the function $\alpha$. In fact, $\alpha$ is unique up to an additive constant $2n\pi$ $(n\in \mathbb{Z})$.

We may take $\delta >0$ so small that $\tau$ is extended to a differentiable map on $\partial\Delta \times (\Delta_{R+\delta}\setminus\Delta_{r})$.
It is possible by our concrete construction of $\phi_1$ in Step 2. 

We put
\begin{equation}
	\mathcal{U}:=\{g\in C^{\frac{1}{2}}(\partial\Delta )\cap A_{\Delta}\mid r<|g(\lambda)|<R+\delta \textrm{ for all $\lambda\in \overline{\Delta}$}\}.
\end{equation}
$\mathcal{U}$ is an open subset of $C^{\frac{1}{2}}(\partial\Delta )\cap A_{\Delta}$.

Let $\beta (g)(\xi)=\alpha (\xi, g(\xi))$ for $\xi\in \partial \Delta$ and for $g\in \mathcal{U}$. Then we define a map
\begin{equation}
	\label{Def:MapF}
	F(g)=g\exp \{-T[\beta (g)]+iP[\beta (g)]\}
\end{equation}
for $g\in \mathcal{U}$, where $P[f]$ is the Poisson integral of a real-valued continuous function $f$ on $\partial\Delta$ and $T[f]$ is the Hilbert transform of $f$, that is, $T[f](z)$ is the conjugate harmonic function of $P[f]$ with $T[f](0)=0$.
Then, we show the following:
\begin{lemma}
	\label{lemma:Lip}
	$F(\mathcal{U})\subset C^{\frac{1}{2}}(\partial\Delta )\cap A_{\Delta}$ and $F$ is locally Lipschitz on $\mathcal{U}$.
\end{lemma}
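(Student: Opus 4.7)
The plan is to express $F$ as a composition of locally Lipschitz maps between H\"older spaces and to read off the holomorphy of $F(g)$ on $\Delta$ from the identity
\begin{equation*}
-T[\beta(g)]+iP[\beta(g)]=i\bigl(P[\beta(g)]+iT[\beta(g)]\bigr),
\end{equation*}
whose right-hand side is $i$ times a function holomorphic on $\Delta$.

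The first step is to prove that $\beta:\mathcal{U}\to C^{1/2}_{\mathbb{R}}(\partial\Delta)$ is locally Lipschitz. Fix $g_0\in\mathcal{U}$. By continuity and compactness of $\overline{\Delta}$ there exist $\eta>0$ and a $C^{1/2}$-neighbourhood $\mathcal{V}$ of $g_0$ in $\mathcal{U}$ such that $g(\overline{\Delta})\subset K:=\{r+\eta\le|z|\le R+\delta-\eta\}$ for every $g\in\mathcal{V}$. By the construction of $\phi_1$ in Step 2, $\alpha$ is of class $C^{1}$ on the compact set $\partial\Delta\times K$, so its partial derivatives are uniformly bounded there by some constant $L>0$. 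A composition estimate using the mean-value theorem in each variable separately then yields both a bound of the form $\|\beta(g)\|_{C^{1/2}}\le C_1(1+\|g\|_{C^{1/2}})$ and a Lipschitz bound $\|\beta(g_1)-\beta(g_2)\|_{C^{1/2}}\le C_2\|g_1-g_2\|_{C^{1/2}}$ for all $g,g_1,g_2\in\mathcal{V}$, with constants depending only on $\mathcal{V}$.

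Next, I would invoke the classical Privalov theorem: for $0<\alpha<1$ the Hilbert transform $T$ and the Poisson integral $P$ are bounded linear operators on $C^{\alpha}(\partial\Delta)$, and $P[f]+iT[f]$ extends to a function holomorphic on $\Delta$. Applied with $\alpha=1/2$, this shows that $h(g):=-T[\beta(g)]+iP[\beta(g)]$ lies in $C^{1/2}(\partial\Delta)\cap A_\Delta$ and depends locally Lipschitz-continuously on $g$. Since $C^{1/2}(\partial\Delta)$ is a Banach algebra under a suitable equivalent norm, the Taylor series of $\exp$ converges uniformly on bounded sets, so $h\mapsto e^{h}$ is locally Lipschitz from $C^{1/2}(\partial\Delta)\cap A_\Delta$ to itself. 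Finally, pointwise multiplication is a continuous bilinear operation on $C^{1/2}(\partial\Delta)\cap A_\Delta$, and therefore $F(g)=g\cdot e^{h(g)}$ belongs to $C^{1/2}(\partial\Delta)\cap A_\Delta$ with locally Lipschitz dependence on $g$.

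I expect the principal obstacle to be the composition estimate in the first step: although the H\"older exponent $1/2$ of $g$ combines with the $C^{1}$ regularity of $\alpha$ to yield exponent $1/2$ for $\beta(g)$, one must take care to split the difference $\beta(g_1)(\xi_1)-\beta(g_2)(\xi_2)$ into pieces that can be controlled separately by $|\xi_1-\xi_2|^{1/2}$ and by $\|g_1-g_2\|_{C^{1/2}}$, while keeping the resulting constant uniform over $\mathcal{V}$. Once this technical estimate is secured, the rest is a packaging of well-known facts about the Hilbert transform on H\"older classes and elementary Banach-algebra calculus.
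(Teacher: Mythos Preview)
Your proposal is correct and follows essentially the same route as the paper's proof: holomorphy of $F(g)$ from the $P+iT$ structure, preservation of the $C^{1/2}$ class by the Hilbert transform (the paper cites Duren, Theorem~5.8, i.e.\ Privalov's theorem), and local Lipschitz continuity from the differentiability of $\alpha$ together with a compactness argument. The paper's proof is a two-line sketch of exactly these ingredients; what you have written is a careful unpacking of the ``compactness argument'' the paper leaves implicit, including the Banach-algebra structure of $C^{1/2}(\partial\Delta)$ needed to handle $\exp$ and multiplication.
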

\begin{proof}
	Let $g\in \mathcal{U}$. Then, $F(g)$ gives a holomorphic function on $\Delta$. 
	It follows from a property of the Hilbert transform that $F(g)$ belongs to $C^{\frac{1}{2}}(\partial\Delta)$ (cf. \cite{Duren1970} Theorem 5.8). 
	Hence, $F(\mathcal{U})\subset C^{\frac{1}{2}}(\partial\Delta)\cap A_{\Delta}$. 
	
	Since $\alpha$ is differentiable, so is $\beta$. Thus, a compactness argument shows that $F$ is locally Lipschitz on $\mathcal{U}$.
\end{proof}
Here, we consider a differential equation:
\begin{equation}
	\label{eqn:Differential eqn}
	\frac{dg_{t}}{dt}=F(g_{t}), \quad g_{t_0}=g_0\in \mathcal{U}.
\end{equation}
By the standard fact on ordinary differential equations(cf.~\cite{Dieudonn1969} Chapter X), we see that for any initial value $g_{t_0}\in \mathcal{U}$ the differential equation (\ref{eqn:Differential eqn}) has a unique solution $g_t\in \mathcal{U}$ in some interval $(a_0, b_0)$ with $a_0<t_0<b_0$.

If $g_0\in \mathcal{F}^{\zeta}$ for some $\zeta\in\partial\Delta_{R}$, then $g_t\in \mathcal{F}^{\zeta}$ as long as $r< \min_{\lambda \in\overline{\Delta}}|g_t (\lambda )|\leq \|g_t\|_{\infty}\leq R$.
Indeed, from (\ref{Def:MapF}) and (\ref{eqn:Differential eqn}) we have for each $\xi\in\partial\Delta$
\begin{eqnarray*}
	\frac{d}{dt}g_{t}(\xi)&=& F(g_t (\xi))=g_t (\xi)\exp\{-T(\beta (g_t (\xi)))+iP(\beta (g_t (\xi)))\}\\
	&=& g_t (\xi)\exp\{-T(\alpha (\xi, g_t (\xi))+iP(\alpha (\xi, g_t (\xi)))\}\\
	&=& \frac{g_t (\xi)}{|g_t (\xi)|}|F(g_t (\xi))|\exp \{iP(\alpha (\xi, g_t (\xi)))\}.
\end{eqnarray*}
Noting that $P(f)(\xi)=f(\xi)$ for a continuous function $f$ on $\partial\Delta$, we obtain
\begin{equation*}
	\label{eqn:ODEForline}
	\frac{d}{dt}g_t (\xi)=|F(g_t (\xi))|\tau (\xi, g_t (\xi)).
\end{equation*}
Therefore, for each $\xi\in\partial\Delta$, the function $u_{\xi}(t):=g_t (\xi)$ is a solution of 
\begin{equation}
\label{eqn:ODEofg_t}
	\frac{d}{dt}u=|F(u)|\tau (\xi, u).
\end{equation}
Comparing (\ref{eqn:ODEforEll}) and (\ref{eqn:ODEofg_t}), we verify that $g_{t}(\xi)\in \ell (\xi, \zeta)$ if $g_{0}(\xi)\in \ell (\xi, \zeta)$ for $\xi\in \partial\Delta$.
If $|g_{t}(\lambda )|\leq r$, then it follows from Lemma \ref{lemma:Fa-class} (2) that $g_t$ is a constant function which does not belong to $\mathcal{U}$.

Let $\mathcal{I}$ be the largest interval where $g_{t}$ exists and belongs to $\mathcal{F}^{\zeta}\cap\mathcal{U}$. Then, $\mathcal{I}=(a, b]$ for some $a, b\in \mathbb{R}$. From the maximality of $\mathcal{I}$, we have
\begin{enumerate}
  \item $g_{b}(\xi)=\zeta$ for every $\xi\in \partial\Delta$;
  \item $\lim_{t\to \alpha+}g_{t}(\xi)=r\zeta/|\zeta|$ for every $\xi\in \partial\Delta$.
\end{enumerate}
Thus, we verify that
\begin{equation}
	\label{stmt:homeo}
	\textrm{the map }\mathcal{I}\ni t\mapsto g_{t}(\xi)\in \ell (\xi, \zeta)\cap (\overline{\Delta_{R}}\setminus\overline{\Delta_{r}})
	 \textrm{ is a homeomorphism.}
\end{equation}

\medskip

\textbf{Step 5: Extension.}

Let $(\lambda, z)$ be in $\overline{\Delta}\times (\overline{\Delta_{R}}\setminus\overline{\Delta_{r}})$. 
Since $\cup_{\zeta\in \partial\Delta_R}\ell (\xi , \zeta)=\overline{\Delta_{R}}$
$(\xi\in \partial\Delta)$ and $\ell (\xi, \zeta)\cap\ell (\xi , \zeta')=\{0\}$ if $\zeta\not=\zeta'$, there exists a unique $\zeta\in \partial\Delta_R$ such that $z\in \ell (1, \zeta)$. 

We consider the initial value problem of (\ref{eqn:Differential eqn}) for $g_0\equiv \zeta\in \mathcal{F}^{\zeta}\cap\mathcal{U}$.
From the result in Step 4, $g_{t}$ belongs to $\mathcal{F}^{\zeta}$ for $t\in \mathcal{I}$ and there exists a unique $t({z})\in \mathcal{I}$ such that $g_{t({z})}(1)=z\in \ell (1, \zeta)\cap (\overline{\Delta_{R}}\setminus\overline{\Delta_{r}})$.

Thus, we have $g_{t(z)}\in \mathcal{F}^{\zeta}$ with $g_{t(z)}(1)=z$. 
If $g^*$ be a function of $\mathcal{F}^{\zeta}$ with $g^{*}(1)=z$, then we consider the initial value problem of (\ref{eqn:Differential eqn}) for $g_{t_0}=g^*$.
Take the largest interval $\mathcal{I}^{*}=(\alpha^{*}, \beta^{*}]$ for the problem as in Step 4.
Then, $g^{*}_{\beta^{*}}\equiv \zeta$. From the uniqueness of the initial value problem, we verify that $g^{*}=g_{t(z)}$.

Therefore, we have the following.
\begin{lemma}
\label{lemma:unique}
	For each $z\in \overline{\Delta _{R}}\setminus\overline{\Delta_{r}}$, there exists a unique $\zeta\in\partial\Delta_{R}$ and a unique $g:=g_{t(z)}\in\mathcal{F}^{\zeta}$ such that $g(1)=z$.
\end{lemma}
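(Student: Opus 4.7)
My plan is to combine the geometry of the radial structure (Lemma \ref{lemma:RadialStructure}) with the flow of the differential equation (\ref{eqn:Differential eqn}) to produce $g$, and then to invoke the local Lipschitz property of $F$ (Lemma \ref{lemma:Lip}) to pin down uniqueness.

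First I would identify the correct $\zeta$. For $z\in\overline{\Delta_{R}}\setminus\overline{\Delta_{r}}$, Lemma \ref{lemma:RadialStructure} (2) and (3) show that the arcs $\{\ell(1,\zeta)\}_{\zeta\in\partial\Delta_{R}}$ cover $\overline{\Delta_{R}}$ and meet only at $0$; since $z\notin\overline{\Delta_{r}}$, part (5) rules out the meeting point, so there is exactly one $\zeta\in\partial\Delta_{R}$ with $z\in\ell(1,\zeta)$. This $\zeta$ is forced to be the one in the lemma, because any candidate $g^{\ast}\in\mathcal{F}^{\zeta'}$ with $g^{\ast}(1)=z$ satisfies $z=g^{\ast}(1)\in\ell(1,\zeta')$, hence $\zeta'=\zeta$ by the unique-arc property just observed.

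For existence I would start the ODE (\ref{eqn:Differential eqn}) at the constant initial datum $g_{0}\equiv\zeta$, which belongs to $\mathcal{F}^{\zeta}\cap\mathcal{U}$. By the analysis in Step 4, the solution $g_{t}$ remains in $\mathcal{F}^{\zeta}$ throughout its maximal interval $\mathcal{I}$, and the homeomorphism (\ref{stmt:homeo}) says the evaluation $t\mapsto g_{t}(1)$ maps $\mathcal{I}$ onto $\ell(1,\zeta)\cap(\overline{\Delta_{R}}\setminus\overline{\Delta_{r}})$ bijectively. Since $z$ lies in this target arc, there is a unique $t(z)\in\mathcal{I}$ with $g_{t(z)}(1)=z$; this produces the desired $g:=g_{t(z)}$.

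Uniqueness is the delicate step. Given any $g^{\ast}\in\mathcal{F}^{\zeta}$ with $g^{\ast}(1)=z$, I would run (\ref{eqn:Differential eqn}) with initial value $g^{\ast}$ on its maximal interval $\mathcal{I}^{\ast}=(\alpha^{\ast},\beta^{\ast}]$. By the same analysis in Step 4, the boundary behaviour of the flow forces $g^{\ast}_{\beta^{\ast}}\equiv\zeta$ (otherwise $g^{\ast}_{\beta^{\ast}}$ would have modulus $\leq r$ and be constant by Lemma \ref{lemma:Fa-class} (2), excluded from $\mathcal{U}$). Thus both integral curves of (\ref{eqn:Differential eqn})---the one from $g_{0}\equiv\zeta$ and the one from $g^{\ast}$---pass through the same state $\zeta$; by Lemma \ref{lemma:Lip} the vector field $F$ is locally Lipschitz on $\mathcal{U}$, so the Cauchy--Lipschitz uniqueness theorem for ODEs in the Banach space $C^{1/2}(\partial\Delta)\cap A_{\Delta}$ forces the two trajectories to coincide after a parameter shift. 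Evaluating at $\xi=1$ and applying (\ref{stmt:homeo}) then gives $g^{\ast}=g_{t(z)}$.

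The main obstacle I anticipate is making the gluing of the two integral curves at the constant state $\zeta$ fully rigorous: I need to know that the constant function $\zeta$ is an interior point of the open set $\mathcal{U}$ (which it is, since $r<R<R+\delta$), so that Cauchy--Lipschitz genuinely applies in a Banach-space neighbourhood around it. Once that is in place, the only real content is the observation that the flow exits $\mathcal{F}^{\zeta}\cap\mathcal{U}$ only at the two constants $\zeta$ and $r\zeta/|\zeta|$, which is exactly what the boundary analysis in Step 4 gives.
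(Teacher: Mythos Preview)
Your proposal is correct and follows essentially the same route as the paper: identify $\zeta$ via the radial structure, produce $g_{t(z)}$ by flowing from the constant $\zeta$ and invoking the homeomorphism (\ref{stmt:homeo}), and for uniqueness flow from an arbitrary $g^{\ast}\in\mathcal{F}^{\zeta}$ until it reaches the constant $\zeta$, then appeal to ODE uniqueness. Your added remarks about $\zeta$ lying in the interior of $\mathcal{U}$ and the parameter shift make explicit what the paper leaves implicit, but the argument is the same.
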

Let $z, z'$ be two distinct points in $\overline{\Delta_{R}}\setminus \overline{\Delta_{r}}$. 
Then, there exist $\zeta, \zeta'\in \partial\Delta$ such that $z\in \ell (1, \zeta)$ and $z' \in \ell (1, \zeta')$.
Hence, $g_{t(z)}\in \mathcal{F}^{\zeta}$ and $g_{t(z')}\in \mathcal{F}^{\zeta'}$.
We show that $g_{t(z)}(\xi)\not=g_{t(z')}(\xi)$ for any $\xi\in \partial\Delta$.

From the definition of $\mathcal{F}$, $g_{t(z)}(\xi)\in \ell (\xi, \zeta)$ and $g_{t(z')}(\xi)\in \ell (\xi, \zeta')$.
If $\zeta\not=\zeta'$, then $g_{t(z)}(\xi)\not=g_{t(z')}(\xi)$ for any $\xi\in \partial\Delta$ because $\ell (\xi, \zeta)\cap\ell (\xi, \zeta')=\{0\}$ and $g_{t(z)}(\xi), g_{t(z')}(\xi) \not=0$.

If $\zeta=\zeta '$, then both $g_{t(z)}(\xi)$ and $g_{t(z')}(\xi)$ are on the same curve $\ell (\xi, \zeta)$.
However, $t(z)\not=t(z')$. Thus, it follows from (\ref{stmt:homeo}) that $g_{t(z)}(\xi)\not=g_{t(z')}(\xi)$ for $\xi\in\partial\Delta$.

We put $G_{z, z'}:=g_{t(z)}-g_{t(z')}$. The above argument shows that $G_{z, z'}$ is a non-vanishing continuous function on $\partial \Delta$ for each $(z, z')\in (\overline{\Delta_{R}}\setminus\overline{\Delta_{r}})^{2}-\{\textrm{diagonals}\}$ and it is continuous with respect to $(z, z')$.
Hence, the winding number of $G_{z, z'}(\partial\Delta)$ around the origin is independent of $(z, z')$. 
From Lemma \ref{lemma:Fa-class} (1), we have $g_{t(\zeta)}\equiv \zeta\in \partial\Delta_R$. 
Thus, we see that the winding number of $G_{\zeta , z'}(\partial\Delta)=\zeta-g_{t(z')}(\partial\Delta)$ around the origin has to be zero since $g_{t(z')}(\Delta)\subset \Delta_{R}$ and $g_{t(z')}(\partial\Delta)\not\ni \zeta$ if $z'\in \Delta_{R}$.
Obviously, the winding number of $G_{\zeta , z'}(\partial\Delta)=\zeta-g_{t(z')}(\partial\Delta)$ around the origin is zero when $z'=\zeta' (\not=\zeta)$ is on $\partial\Delta_R$. 
It follows from the argument principle that $g_{t(z)}-g_{t(z')}$ does not have zeros in ${\Delta}$ if $z\not=z'$. 
Therefore, the map 
$$
\overline{\Delta_{R}}\setminus\overline{\Delta_{r}}\ni z \mapsto g_{t(z)}(\lambda)\in \overline{\Delta_{R}}\setminus\overline{\Delta_{r}}
$$
is injective and continuous for each $\lambda \in \overline{\Delta}$.

Now, we define a map $\Psi_{\lambda} : \widehat{\mathbb{C}}\to \widehat{\mathbb{C}}$ for each $\lambda \in\overline{\Delta}$ by
\begin{equation}
	\label{dfn:pre-motion}
	\Psi _{\lambda}(z)=	
	\begin{cases}
		g_{t(z)}(\lambda ), \quad & z\in \overline{\Delta_{R}}\setminus\overline{\Delta_{r}} \\
		z, \quad &\textrm{otherwise}.
	\end{cases}
\end{equation}

As we have noted above, $\Psi_{\lambda}$ is a homeomorphism on $\widehat{\mathbb{C}}$ for each $\lambda \in \overline{\Delta}$; we define
\begin{equation}
	\label{dfn:final}
	\widehat{\phi}(p, z)=\Psi_{\varphi^{-1}(p)}\circ \Psi_{0}^{-1}(z)
\end{equation}
for $p\in \overline{X'}$.
We see that $\widehat{\phi}$ is a holomorphic motion of $\widehat{\mathbb{C}}$ over $\overline{X'}$.

Indeed, for the basepoint $p_0\in X'$, we have
$$
\widehat{\phi}(p_0, z)=\Psi_{0}(\Psi_{0}^{-1}(z))=z
$$
as $\varphi ^{-1}(p_0)=0$. 
It is obvious that $\widehat{\phi}(p, \cdot)$ is a homeomorphism for each $p\in\overline{X'}$.

If $z$ is in $\overline{\Delta_R}\setminus\overline{\Delta_r}$, then we have 
$$
\widehat{\phi}(p, z)=g_{t(z_0)}(\varphi^{-1}(p)),
$$
where $z_0$ is the point in $\overline{\Delta _R}\setminus{\overline{\Delta_r}}$ with $\Psi_{0}(z)=z_0$: $\hat{\phi}(p, z)$ is holomorphic with respect to $p\in \overline{X'}$.

If $z$ is not in $\overline{\Delta_R}\setminus\overline{\Delta_r}$, then $\widehat{\phi}(p, z)=z$.
Thus, we see that $\widehat{\phi}(\cdot, z)$ is holomorphic in $\overline{X'}$ and we verify that $\widehat{\phi}$ is a holomorphic motion of $\widehat{\mathbb{C}}$ over $\overline{X'}$.

Finally, we see that $\widehat{\phi}(p, \cdot)$ agrees with $\phi (p, \cdot)$ on $E=\{0, 1, \infty, z_1, z_2, \dots , z_n\}$ for any $p\in X'$.
For $z_j\in E$ $(j=1, 2, \dots, n)$, there exists $\zeta_{j}\in \partial \Delta_{R}$ such that $\phi (\varphi (\cdot), z_{j})\in \mathcal{F}^{\zeta_{j}}$ (see Lemma \ref{lemma:RadialStructure} (4)). 
In particular, $\omega_j:=\phi (\varphi (1), z_j)$ is on $\ell (\varphi (1), \zeta_j)$ by the definition of $\mathcal{F}^{\zeta_j}$, where $\varphi : \Delta \to X'$ is the Riemann map given in Step 1.
Since both $g_{t(\omega_j)}$ and $\phi (\varphi (\cdot), z_j)$ belong to $\mathcal{F}^{\zeta_j}$ and take the same value $\omega_j$ at $1$, it follows from the uniqueness of Lemma \ref{lemma:unique} that 
\begin{equation*}
	\phi(\varphi (\cdot), z_j)=g_{t(\omega_{j})}(\cdot)
\end{equation*}
on $\Delta$.
Hence, we have
\begin{equation}
	\label{eqn:confirmExtent1}
	\phi(\varphi(\lambda ), z_{j})=g_{t(\omega_{j})}(\lambda )=\Psi_{\lambda} (\omega_{j})=\Psi_{\lambda} (\phi(1, z_{j}))
\end{equation}
and
\begin{equation}
	\label{eqn:confirmExtent2}
	\Psi_0 (\phi(1, z_j))=\phi(p_0, z_j)=z_j.
\end{equation}
Therefore, we conclude from (\ref{eqn:confirmExtent1}) and (\ref{eqn:confirmExtent2}) that
\begin{equation}
	\widehat{\phi}(p, z_j)=\Psi_{\lambda}(\Psi_{0}^{-1}(z_j))=\Psi_{\lambda}(\phi(1, z_j))=\phi (p, z_j), \quad (\lambda =\varphi^{-1}(p))
\end{equation}
for $p\in {X'}$ $(j=1, 2, \dots , n)$.
That is, both holomorphic motions agree at $E$.

\medskip

\textbf{Step 6 : Getting a holomorphic motion over the whole Riemann surface $X$.}

In Step 5, we obtain a holomorphic motion $\widehat{\phi}: \overline{X'}\times\widehat{\mathbb{C}}\to \widehat{\mathbb{C}}$.
Now, we show that $\widehat{\phi}$ becomes a holomorphic motion of $\widehat{\mathbb{C}}$ over $X$. 

Let $\xi^{+}, \xi^{-}$ be two points on $\partial X'$ coming from the same point $\xi$ in $X$ (see {\textsc{Figure 2}}) and $z$ be a point in $\overline{\Delta}_{R}\setminus\overline{\Delta}_{r}$.
There exists a unique $\zeta\in \partial\Delta_{R}$ such that $z$ is on $\ell (1, \zeta)$.
We find a function $g_{t(z)}\in \mathcal{F}^{\zeta}$ appearing in (\ref{dfn:pre-motion}) is the function in flow $\{g_{t}\}_{t\in \mathcal{I}}$ at time $t=t(z)$ such that $g_{t(z)}(1)=z$. 
It is obtained from the solution of the differential equation (\ref{eqn:Differential eqn}) for the initial value $g_0\equiv \zeta$.

We also see that $\mathcal{I}\ni t\mapsto g_{t}(\xi^{\pm})\in\ell (\xi^{\pm}, \zeta)$ gives  homeomorphisms of $\mathcal{I}$ onto $\ell(\xi^{\pm}, \zeta)$ which are simple arcs from $\zeta$ to $r\zeta/|\zeta|$.
On the other hand, from Lemma \ref{lemma:RadialStructure} (6), we have $\ell (\xi^{+}, \zeta)=\ell (\xi^{-}, \zeta)$ for any $\zeta\in \partial \Delta_{R}$.
Hence, both $t\mapsto g_{t}(\xi^{+})$ and $t\mapsto g_{t}(\xi^{-})$ are solutions of the same differential equation with the same initial value $\zeta$. 
Thus, $g_{t}(\xi^{+})=g_{t}(\xi^{-})$ for every $t\in \mathcal{I}$. 
In particular, $g_{t(z)}(\xi^{+})=g_{t(z)}(\xi^{-})$.
We verify that $g_{t(z)}(\xi):=g_{t(z)}(\xi^{\pm})$ is well defined; $\widehat{\phi}(\cdot, z)$ is a continuous function on $X$.

For each $z\in \widehat{\mathbb{C}}$, $X\ni p\mapsto \widehat{\phi}(p, z)$ is continuous on $X$ and holomorphic on $X'=X\setminus\cup_{i=1}^{k}\alpha_{i}$.
Since $\alpha_{i}$ $(i=1, 2, \dots , k)$ are analytic curves, it follows from a fundamental result of complex analysis that $\widehat{\phi} (\cdot, z)$ must be a holomorphic function on $X$. 
Hence, we have obtained a desired holomorphic motion $\widehat{\phi}$ of $\widehat{\mathbb{C}}$ over $X$ which extends $\phi$.
We have completed the proof of the theorem.

\section{Proof of Theorem \ref{thm:mythmTrace}}
We prove both statements (1) and (2) by constructing examples.

Let $E$ be $\{0, 1, \infty, z_0, z_1, \dots , z_n\}$. 
We consider a condition for the monodromy of a holomorphic motion of $E$ to be trivial.

\begin{lemma}
\label{lemma:Kra}
	Let $f$ be a holomorphic function on a Riemann surface $X$ with a basepoint $p_0$ such that
	$f(X)\cap (E\setminus\{z_0\}) =\emptyset$ and $f(p_0)=z_0$. 
	Let $\phi_{f}$ be a holomorphic motion of $E$ over $X$ defined by
	\begin{equation}
		\phi_{f} (p, z)=\begin{cases}
			z, \quad &(z\in E\setminus\{z_0\}), \\
			f(p), \quad &(z=z_0).
		\end{cases}
	\end{equation}
	Then, the monodromy of $\phi_{f}$ is trivial if and only if for any closed curve $\gamma$ in $X$,$f(\gamma)$ is homotopic to the trivial curve in $\mathbb{C}\setminus\{0, 1, z_1, \dots , z_n\}$. 
\end{lemma}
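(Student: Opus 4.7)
The plan is to identify the monodromy of $\phi_f$ with the point-pushing homomorphism of a Birman exact sequence, and then invoke the injectivity of that map.

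First I would reduce the problem to the single finite subset $E' = E$. Whenever $z_0 \notin E'$, the restriction $\phi_f|_{E'}$ is the trivial motion, so $\rho_{E'}(\sigma) = [\mathrm{id}]$ automatically. For any $E'$ containing $z_0$, the forgetful homomorphism $\mathrm{PMod}(0, n+4) \to \mathrm{PMod}(0, |E'|)$ sends $\rho_E(\sigma)$ to $\rho_{E'}(\sigma)$ and sends $[\mathrm{id}]$ to $[\mathrm{id}]$. Hence the monodromy of $\phi_f$ is trivial if and only if $\rho_E(\sigma) = [\mathrm{id}]$ for every $\sigma \in \pi_1(X, p_0)$.

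Next I would describe the braid underlying $\rho_E(\sigma)$ explicitly. For a representative loop $\gamma : [0,1] \to X$ of $\sigma$, the braid of $\phi_f$ on $E$ consists of $n+3$ constant strands at $\{0, 1, \infty, z_1, \ldots, z_n\}$ together with one moving strand $t \mapsto f(\gamma(t))$, which by the hypothesis on $f$ lies throughout in $\widehat{\mathbb{C}} \setminus \{0, 1, \infty, z_1, \ldots, z_n\} = \mathbb{C} \setminus \{0, 1, z_1, \ldots, z_n\}$ and is closed based at $z_0$.

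Then I would apply the Birman exact sequence of point-pushing for the puncture $z_0$:
\begin{equation*}
1 \to \pi_1\bigl(\widehat{\mathbb{C}} \setminus \{0, 1, \infty, z_1, \ldots, z_n\},\, z_0\bigr) \xrightarrow{\iota} \mathrm{PMod}(0, n+4) \to \mathrm{PMod}(0, n+3) \to 1.
\end{equation*}
Since $n \geq 0$, the $(n+3)$-punctured sphere has Euler characteristic $-(n+1) < 0$, so $\iota$ is injective. By the very definition of $\iota$ as an isotopy dragging the puncture $z_0$ along a loop while keeping the other $n+3$ punctures fixed pointwise, the mapping class $\rho_E(\sigma)$ coincides with $\iota([f \circ \gamma])$.

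Combining these, injectivity of $\iota$ yields $\rho_E(\sigma) = [\mathrm{id}]$ if and only if $[f \circ \gamma] = 0$ in $\pi_1(\mathbb{C} \setminus \{0, 1, z_1, \ldots, z_n\}, z_0)$, which is precisely the stated homotopy triviality condition on $f(\gamma)$. The main obstacle will be the identification $\rho_E(\sigma) = \iota([f \circ \gamma])$ in Step 3: one must check carefully, using that the braid monodromy and the geometric point-pushing isotopy produce the same class in $\mathrm{PMod}(0, n+4)$, that the ambient isotopy realizing $\rho_E(\sigma)$ can indeed be chosen to fix the $n+3$ stationary punctures pointwise throughout the motion. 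A minor subtlety is that "$f(\gamma)$ is homotopic to the trivial curve" denotes free homotopy, but since the trivial homotopy class is a single conjugacy class, this coincides with based null-homotopy at $z_0$.
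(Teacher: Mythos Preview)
Your argument is correct and rests on the same underlying mechanism as the paper's proof: recognizing the monodromy as a point-pushing map and using its injectivity. The paper, however, phrases this differently. Rather than invoking the Birman exact sequence directly, it first extends the lifted motion over a simply connected neighborhood of a lift $\tilde\gamma$ (via Theorem~\ref{thm:old}(2)) to obtain a concrete isotopy $\{\varphi_t\}_{0\le t\le 1}$ of quasiconformal self-maps of $\widehat{\mathbb C}$; since the original motion fixes $E\setminus\{z_0\}$ pointwise, so does every $\varphi_t$, and hence $\varphi_1$ is homotopic to the identity in $\mathbb C\setminus\{0,1,z_1,\dots,z_n\}$. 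The paper then cites Kra~\cite{Kra1981} for the statement that $\varphi_1$ is homotopic to the identity \emph{rel $E$} if and only if the trace $\cup_t\varphi_t(z_0)=f(\gamma)$ is null-homotopic, which is precisely your injectivity of $\iota$ in a Teichm\"uller-theoretic formulation.

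Your route via the braid description and the Birman exact sequence is arguably cleaner: it avoids the analytic extension step and works directly with the mapping class group, and your preliminary reduction to $E'=E$ via the forgetful homomorphism makes the role of the pure mapping class group explicit. The paper's route has the advantage of producing an actual ambient isotopy, which fits the analytic flavour of the surrounding arguments, but the content is the same.
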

\begin{proof}
	It is obvious that $\phi_{f}$ is a holomorphic motion of $E$ over $X$. 
	
	To consider the monodromy of $\phi_{f}$, we take a closed curve $\gamma$ on $X$ passing through $p_0$ and the lift $\tilde{\gamma}$ of $\gamma$ to the universal covering $\Delta$ of $X$.
	For the universal covering map $\pi : \Delta \to X$, $\tilde{\gamma}$ is a simple arc connecting two points $\zeta_0, \zeta_1\in \pi^{-1}(p_0)$. 
	
	By restricting the holomorphic motion $\tilde \phi_{f}(\cdot, z) := \phi_{f} (\pi (\cdot), z)$ 
	$(z\in E)$ to a simply connected neighborhood $U$ of $\tilde{\gamma}$ in $\mathbb{H}$, we have a holomorphic motion $\tilde \phi_{f}|U$ of $E$ over $U$ with basepoint $\zeta_0$. 
	Since $U$ is a simply connected domain which is conformally equivalent to the unit disk $\Delta$, it follows from Theorem \ref{thm:old} (2), $\tilde \phi_{f}|U$ is extended to a holomorphic motion, say $\hat{\phi}_U$, of $\widehat{\mathbb{C}}$ over $U$. 
	
	We have a continuous family $\{\varphi _{t}\}_{0\leq t\leq 1}$ by $\varphi_{t}(z)=\hat{\phi}_U (\tilde{\gamma} (t), z)$, where $\tilde{\gamma} (t)$ is a parametrization of $\tilde{\gamma}$ with $\tilde{\gamma}(0)=\zeta_0$ and $\tilde{\gamma}(1)=\zeta_1$. 
	Moreover, each $\varphi_{t}$ is a quasiconformal self-map of $\widehat{\mathbb{C}}$ and $\varphi_0=id$.
	The map $\varphi_1$ determines the monodromy for $\gamma$.
	
	Since $\varphi_t$ fixes each point of $E\setminus\{w_0\}$, $\{\varphi_t\}_{0\leq t\leq 1}$ gives a homotopy between $\varphi_0=id$ and $\varphi_1$.
	Therefore, $\varphi_1$ is homotopic to the identity in ${\mathbb{C}}\setminus \{0, 1, z_1\dots , z_n\}$.
	Thus, it follows from \cite{Kra1981} that $\varphi_1$ is homotopic to the identity on $\widehat{\mathbb{C}}\setminus E$ rel $E$ if and only if $\cup_{t\in [0, 1]}\varphi_{t}(z_0)=\phi (\pi (\tilde{\gamma}(t)), z_0)=f(\gamma)$ is homotopic to the trivial curve.	
\end{proof}

{\bf{Proof of (1).}}
We take closed curves $\gamma_0, \gamma_1$ as in {\sc{Figure}} \ref{Figure:Chirka} and put $\gamma=\gamma_{1}\gamma_{0}\gamma_{1}^{-1}\gamma_{0}^{-1}$.
The curve $\gamma$ represents a non-trivial element of $\pi_{1}(\mathbb{C}\setminus\{0, 1, z_1, \dots , z_n\}, z_0)$.
\begin{figure}
\centering
\includegraphics[scale=.45, bb=400 200 500 500]{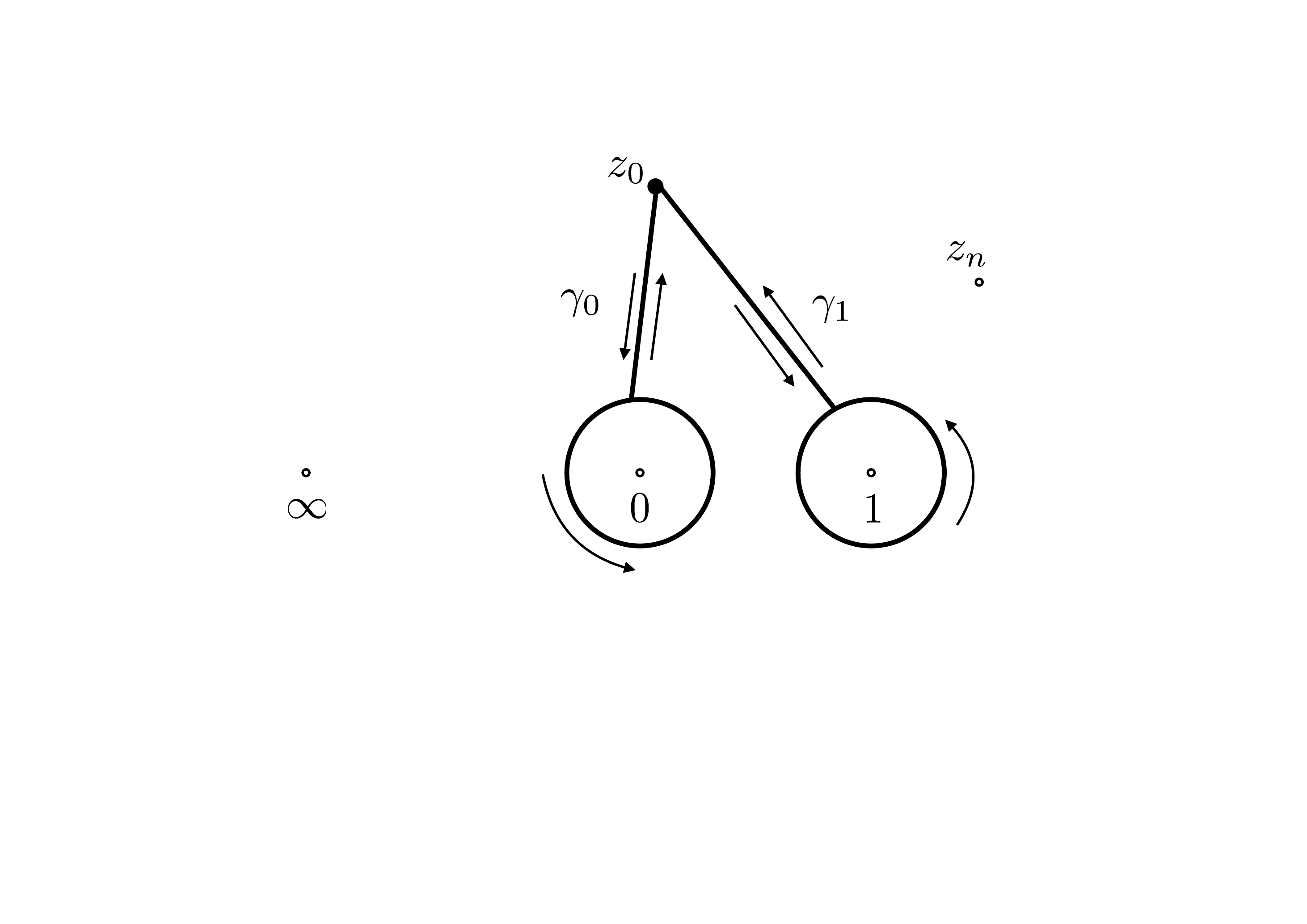}
\caption{}
\label{Figure:Chirka}
\end{figure}

For a parametrization $\gamma (t)$ $(0\le t\le 1)$ of $\gamma$ with $\gamma(0)=\gamma(1)=w_0$, we have
\begin{equation}
		\label{eqn:winding=0}
	\int_{\gamma}d\textrm{arg}(w-1)=\int_{\gamma}d\textrm{arg}w=0.
\end{equation}

Let $\Gamma$ be a Fuchsian group acting on $\Delta$ such that $\Delta/\Gamma=\mathbb{C}\setminus\{0, 1, z_1, \dots , z_n\}$. 
Take $g\in \Gamma$ corresponding to $\gamma$. 
Then, $g$ is a hyperbolic M\"obius transformation and $A:=\Delta/<g>=\{1<|\lambda |<R\}$ is an annulus for some $R>1$; we have a holomorphic covering map $\pi_{g} : A\to \mathbb{C}\setminus\{0, 1, z_1, \dots , z_n\}$. 
We take $\lambda _0\in A$ so that $\pi_{g}(\lambda _0)=z_0$.
	
We take the point $\lambda _0\in A$ as a basepoint and define a map $\phi : A\times E\to \widehat{\mathbb{C}}$ by
\begin{equation}
	\phi (\lambda , z)=
	\begin{cases}
		z, \quad (z=0, 1, \infty, z_1, \dots , z_n) \\
		\pi_{g}(\lambda ), \quad (z=z_0).
	\end{cases}
\end{equation}
It is easy to see that $\phi$ is a holomorphic motion of $E$ over the Riemann surface $A$.

Consider $\alpha=\{|\lambda |=|\lambda _0|\}\subset A$. Then, $\phi (\alpha, z_0)$ is a closed curve in $\mathbb{C}\setminus\{0, 1\}$ homotopic to $\gamma$.
It follows from (\ref{eqn:winding=0}) that
\begin{equation*}
	n(\alpha; z, z')=0
\end{equation*}
for any distinct points $z, z'$ in $E$. 

Since the homotopy class of $\alpha$ generates $\pi_1 (A, \lambda_0)$, the holomorphic motion $\phi$ satisfies Chirka's condition.
However, Lemma \ref{lemma:Kra} implies that
the monodromy of $\phi$ is not trivial.
Hence, the holomorphic motion $\phi$ cannot be extended to a holomorphic motion of $\widehat{\mathbb{C}}$ over $A$ and we obtain a desired example.

\medskip

{\bf{Proof of (2).}}
The proof of (2) is done by using the same idea as in the proof of (1), but it is a bit complicated.

If the set $E$ consists of four points, then the above example constructed in (1) for $n=0$ is a desired one because any holomorphic motion of three points can be extended to a holomorphic motion of $\widehat{\mathbb C}$. 
Therefore, we may assume that $n\geq 1$.

For $E=\{0, 1, \infty, z_0, z_1, \dots , z_n\}$, we take $\gamma_0, \gamma_1, \dots , \gamma_{n+1}$ in {\sc Figure} \ref{FigurePB}. 
The homotopy classes of them are generators of a free group $F_{n+2}:= \pi_{1}(\widehat{\mathbb{C}}\setminus (E\setminus\{z_0\}), z_0)$.

We put
$$
\beta =\gamma_0\gamma_1\dots\gamma_{n+1}.
$$
We define a sequence of closed curves, $\tilde{\gamma}_1, \tilde{\gamma}_2, \dots , \tilde{\gamma}_{n+1}$ by
\begin{equation}
	\label{eqn:tilde-gammas}
	\tilde{\gamma}_1=[\gamma_0, \gamma_1], \quad \mbox{and} \quad \tilde{\gamma}_{j}=[\gamma_{j}, \tilde{\gamma}_{j-1}] \quad (j=2, \dots, n+1)
\end{equation}
where $[a, b]=aba^{-1}b^{-1}$.
\begin{figure}
\centering
\includegraphics[scale=.45, bb=80 150 800 575]{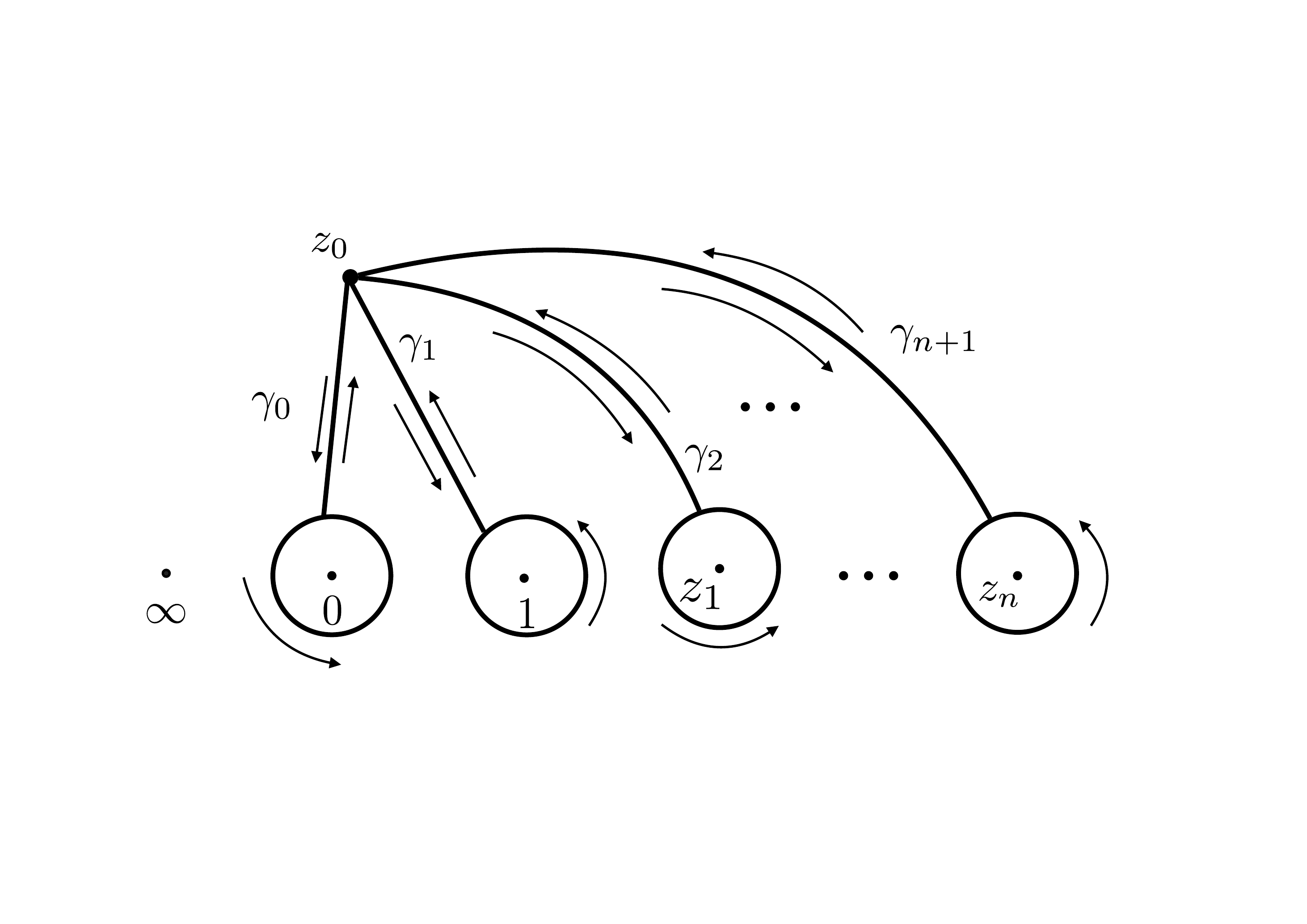}
\caption{}
\label{FigurePB}
\end{figure}

Obviously, each of them represents a non-trivial element in $F_{n+2}$.
Then, we put
\begin{equation}
	\label{eqn:curve-gamma}
	\gamma=[\beta, \tilde{\gamma}_{n+1}].
\end{equation}

The curve $\gamma$ also represents a non-trivial element in $F_{n+2}$. 
Indeed, from the definition we have
\begin{eqnarray*}
	\gamma&=&\beta\tilde{\gamma}_{n+1}\beta^{-1}\tilde{\gamma}_{n+1}^{-1}=\gamma_0\gamma_1\dots\gamma_{n+1}\tilde{\gamma}_{n+1}\gamma_{n+1}^{-1}\dots\gamma_{1}^{-1} \gamma_{0}^{-1}\tilde{\gamma}_{n+1}^{-1}\\
	&=& \gamma_0\gamma_1\dots\gamma_{n+1}\gamma_{n+1}\tilde{\gamma}_{n}\gamma_{n+1}^{-1}\tilde{\gamma}_{n}^{-1}\gamma_{n+1}^{-1}\dots\gamma_{1}^{-1}\gamma_{0}^{-1}\tilde{\gamma}_{n}\gamma_{n+1}\tilde{\gamma}_{n}^{-1}\gamma_{n+1}^{-1}.
\end{eqnarray*}
Since $\tilde{\gamma}_n=\gamma_n\tilde{\gamma}_{n-1}\gamma_{n}^{-1}\tilde{\gamma}_{n-1}^{-1}$, there is not a reduction in $\gamma_0^{-1}\tilde{\gamma}_n$ appearing at the last 3rd and 4th positions in the above word of $\gamma$.
Hence, we verify that $\gamma$ represents a non-trivial element in $F_{n+2}$.

If we remove all $\gamma_{j}^{\pm}$ for the word of $\gamma$, then we have the trivial element since $\tilde{\gamma}_j$ becomes trivial and all of $\tilde{\gamma}_{i}$ $(i=j+1, \dots, n+1)$ become trivial.
We see that $\gamma$ has the following property:
\begin{description}
  \item[(A)] if we remove $\gamma_{j}^{\pm 1}$ from the word of $\gamma$ $(j=0, 1, 2, \dots$ or $n+1)$, then we obtain the trivial element.
\end{description}

Now, we consider a Fuchsian group $\Gamma_n$ acting on $\Delta$ such that $\Delta/\Gamma_n=\mathbb{C}\setminus\{0, 1, z_1, \dots , z_n\}$, which is isomorphic to $F_{n+2}$.
Take $g\in \Gamma_n$ which corresponds to $\gamma$.
The quotient space $\Delta/<g>$ is an annulus $A=\{1<|\lambda|<R\}$ for some $R>1$.
We denote by $\pi_{g} : A\to \mathbb{C}\setminus\{0, 1, z_1, \dots , z_n\}$ the canonical projection and take $\lambda_0\in A$ with $\pi_{g}(\lambda_0)=z_0$ as a basepoint.
Take a circle $\alpha :=\{|\lambda|=|\lambda_0|\}\subset A$.

We define a holomorphic motion $\phi : A\times E\to \widehat{\mathbb{C}}$ over the Riemann surface $A$ by
\begin{equation*}
	\phi (\lambda , z)=\begin{cases}
		z, \quad &(z=0, 1, z_1, \dots , z_n) \\
		\pi_{g}(\lambda), \quad &(z=z_0).
	\end{cases}
\end{equation*}
Since the curve $\gamma$ represents a non-trivial element in $F_{n+2}$, 
we verify that the monodromy of $\phi$ for $\alpha$ is not trivial because of the same reason as in (1).
Therefore, $\phi$ cannot be extended to a holomorphic motion of $\widehat{\mathbb{C}}$ over $A$.

Let $E'$ be a proper subset of $E$.
We see that $\phi|E'$ is extended to a holomorphic motion of $E'$ over $A$.
It suffices to show that the monodromy of $\phi|{E'}$ is trivial.

If $E'$ does not contain $z_0$, then the monodromy of $\phi |{E'}$ is trivial because $\phi (\lambda , z)=z$ for any $\lambda \in A$ and for any $z\in E'$.

Suppose that $E'$ contains $z_0$.
Since $E'$ consists of at most $(n+3)$ points containing $z_0$, it does not contain at least one point in $\{0, 1, \infty, z_1, \dots , z_{n}\}$. 

If $E'\not \ni \infty$, then the curve $\beta$ is trivial because it surrounds $\infty$.
Hence, $\gamma (=[\beta, \tilde{\gamma}_{n+1}])$ represents the trivial element of $\pi_{1}(\widehat{\mathbb{C}}\setminus(E'\setminus\{z_0\}), z_0)$.

If $E'\not\ni z\in \{0, 1, z_1, \dots, z_{n+1}\}$, then some $\gamma_j$ surrounding $z$ becomes trivial.
Hence, from the property (A), we verify that the curve $\gamma$ also represents the trivial element of $\pi_{1}(\widehat{\mathbb{C}}\setminus(E'\setminus\{z_0\}), z_0)$.

Thus, in any case, we see that $\gamma$ represents the trivial element of $\pi_{1}(\widehat{\mathbb{C}}\setminus(E'\setminus\{z_0\}), z_0)$;
a closed curve $\gamma_{\alpha}:=\phi(\alpha, z_0)$ is homotopic to the trivial curve in $\pi_{1}(\widehat{\mathbb{C}}\setminus(E'\setminus\{z_0\}), z_0)$ since $\gamma_{\alpha}$ is homotopic to $\gamma$ from the construction of the Riemann surface $A$.
From Lemma \ref{lemma:Kra}, we verify that the monodromy of $\phi|E'$ is trivial for any $E'$.

\section{Proof of Theorem \ref{thm:mythm2}}
We may obtain Theorem \ref{thm:mythm2} from Theorem \ref{thm:mythm1}. It is done by following the argument of Earle-Kra-Krushkal \cite{Earle1994}.
For readers' convenience, we will give a sketch of the proof.

Let $G$ be a subgroup of $\textrm{M\"ob}(\mathbb{C})$ and $E$ be a $G$-invariant subgroup of $\widehat{\mathbb{C}}$.
As in Theorem \ref{thm:mythm1}, we to show that if the monodromy of a $G$-equivariant holomorphic motion $\phi$ of $E$ over $X$ is trivial, then it is extended to a $G$-equivariant holomorphic motion of $\widehat{\mathbb{C}}$ over $X$.

For simplicity, we assume that $G$ is torsion free. 
First of all, we may assume that $E$ is a closed subset of $\widehat{\mathbb{C}}$ because of the $\lambda$-lemma. 
Hence, $E$ contains the set of all fixed points of $G$ since a fixed point of any $g\in G$ is either an attracting or a repelling fixed point of $g$. 

Let $\phi : X\times  E\to \widehat{\mathbb{C}}$ be a $G$-equivariant holomorphic motion of $E$ over $X$ satisfying (\ref{eqn:equivariance}).
Then, since $\{\theta_{p}\}_{p\in X}$ is a holomorphic family of isomorphisms of $G$ obtained by quasiconformal maps, every $\theta_{p}$ is a type-preserving isomorphism.

Take a point $z_0$ in $E^{c}$. Then, there exists a holomorphic motion $\phi_{0} : X\times (E\cup\{z_0\})\to \widehat{\mathbb{C}}$ which extends $\phi$.
Indeed, it follows from Theorem \ref{thm:mythm1} that we have a holomorphic motion of $\widehat{\mathbb{C}}$ over $X$ which extends $\phi$. By restricting the holomorphic motion to $E\cup\{z_0\}$, we have the desired holomorphic motion $\phi_0$.

Now, we define a map $\widehat{\phi}_0$ on $X\times (E\cup G(z_0))$ by
\begin{equation}
	\label{eqn:equivExtent}
	\widehat{\phi}_0 (p, z)=
	\begin{cases}
		\phi_0 (p, z), \quad z\in E\cup\{z_0\} \\
		\theta_{p}(g)(\phi _{0}(p, z_0)), \quad z=g(z_0).
	\end{cases}
\end{equation}
We will show that $\widehat{\phi}_{0}$ is a $G$-equivariant holomorphic motion of $E\cup G(z_0)$.

Since $\theta_{p_o}(g)=g$ for $g\in G$, we have $\widehat{\phi}_{0}(p_0, \cdot )=id$ on $E\cup G(z_0)$. 
It is also obvious that $\widehat{\phi}_{0}(\cdot, z)$ is holomorphic on $X$. We show the injectivity of $\widehat{\phi}_{0}(p, \cdot )$.

From the $G$-invariance of $E$, we see that $E\cap G(z_0)=\emptyset$ and $\widehat{\phi}_0 (p, \cdot)$ is injective on $E$.
Suppose that $\widehat{\phi}_{0}(p, z)=\widehat{\phi}_{0}(p, g(z_0)$ for some $z\in E$ and $g\in G$. Then we have
\begin{equation}
	\label{eqn:equiInj}
	\phi _{0}(p, z)=\theta_{p}(g)(\phi_{0}(p, z_0)).
\end{equation}
However, from the $G$-invariance of $E$ we have
\begin{equation*}
	\phi_{0}(p, E)=\phi_{0}(p, G(E))=\theta_{p}(G)(\phi_{0}(p, E)).
\end{equation*}
Hence, $\phi_{0}(p, E)$ is $\theta (G)$ invariant. 
Since $\phi_0 (p, z_0)$ is not in $\phi_0 (p, E)$, it contradicts (\ref{eqn:equiInj}).

Finally, suppose that $\widehat{\phi}_{0}(p, g_1 (z_0))=\widehat{\phi}_{0}(p, g_2 (z_0))$ for $g_1, g_2 \in G$.
Then, we have
\begin{equation*}
	\theta_{p}(g_1)(\phi_{0}(p, z_0))=\theta_{p}(g_2)(\phi_{0}(p, z_0)).
\end{equation*}
If $g_1\not=g_2$, then $\phi_{0}(p, z_0)$ is a fixed point of $\theta_{p}(g_1^{-1}\circ g_2)$.
However, this implies that $\phi_{0}(p, z_0)$ is contained in $\phi_{0}(p, E)$ which is $\phi (p, E)$, and we have a contradiction since $\phi_{0}(p, \cdot)$ is injective on $E\cup\{z_0\}$.
Thus, we have shown that $\widehat{\phi}_{0}$ is a holomorphic motion of $E\cup G(z_0)$. 
The $G$-equivariance of $\widehat{\phi}_0$ is trivial from (\ref{eqn:equivExtent}).

By repeating this procedure, we obtain a $G$-equivariant holomorphic motion $\widehat{\phi}_{\infty}$ of a countable dense subset $E_{\infty}$ of $\widehat{\mathbb{C}}$ over $X$.
It follows from the $\lambda$-lemma that $\widehat{\phi}_{\infty}$ is extended to a holomorphic motion of $\widehat{\mathbb{C}}$, the closure of $E_{\infty}$. 
It is also easy to verify that the extended holomorphic motion is $G$-equivariant. Thus, we have completed the proof of Theorem \ref{thm:mythm2}.

\section{Applications.}
\label{sec:Appl}

\subsection{Topological conditions for the extendability of holomorphic motions}
A map $\phi : M\times E\to \widehat{\mathbb{C}}$ is called a \emph{continuous motion} of $E$ over $M$ if it satisfies the conditions (1), (2) in Definition \ref{dfn:holomorphic motion} and

(3') $\phi$ is continuous in $M\times E$ and for each $p\in M$, $\phi (p, \cdot)$ is a homeomorphism from $E$ onto its image. 

\medskip

The concept of the monodromy of holomorphic motions is topological. Hence, immediately we have the following.
\begin{Mythm}
\label{thm:mythem3}
	Let $\phi : X\times E\to \widehat{\mathbb{C}}$ be a holomorphic motion of $E$ over a Riemann surface $X$.
	Suppose that $\phi$ is extended to a continuous motion of $\widehat{\mathbb{C}}$ over $X$. Then, 
	\begin{enumerate}
  \item the holomorphic motion $\phi$ is extended to a holomorphic motion of $\widehat{\mathbb{C}}$ over $X$;
  \item if $\phi$ is $G$-equivariant for a subgroup $G$ of $\textrm{M\"ob}(\mathbb{C})$, then it is extended to a $G$-equivariant holomorphic motion of $\widehat{\mathbb{C}}$ over $X$.
\end{enumerate}
\end{Mythm}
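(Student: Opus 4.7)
The strategy is to deduce both statements directly from Theorem \ref{thm:mythm1} and Theorem \ref{thm:mythm2}, by observing that the hypothesis of a continuous extension forces the monodromy of $\phi$ to be trivial. Since the monodromy is a topological invariant of the fibrations $\phi|E'$ over $X$ for finite subsets $E'\subset E$, any continuous trivialization of these fibrations will suffice, and the continuous extension provides exactly such a trivialization.

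The main step is the following. Let $\tilde\phi:X\times\widehat{\mathbb{C}}\to\widehat{\mathbb{C}}$ denote a continuous motion extending $\phi$. For any finite subset $E'=\{w_1,\dots,w_n\}\subset E$, the restriction $\phi|E'$ determines a locally trivial fibration over $X$ whose fiber over $p\in X$ is the $n$-punctured sphere $\widehat{\mathbb{C}}\setminus\phi(p,E')$, and the monodromy $\rho_{E'}:\pi_1(X,p_0)\to\textrm{Mod}(0,n)$ is the topological invariant of this bundle. Define
\begin{equation*}
  \Phi:X\times(\widehat{\mathbb{C}}\setminus E')\longrightarrow\bigl\{(p,w)\in X\times\widehat{\mathbb{C}}:w\notin\phi(p,E')\bigr\},\qquad \Phi(p,z)=(p,\tilde\phi(p,z)).
\end{equation*}
Since each slice $\tilde\phi(p,\cdot)$ is a self-homeomorphism of $\widehat{\mathbb{C}}$ sending $E'=\phi(p_0,E')$ onto $\phi(p,E')$, the map $\Phi$ is a global topological trivialization of this fibration, so $\rho_{E'}(\sigma)=[\textrm{id}]$ for every $\sigma\in\pi_1(X,p_0)$. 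As $E'$ was arbitrary, the monodromy of $\phi$ is trivial.

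Once the monodromy is shown to vanish, statement (1) is immediate from Theorem \ref{thm:mythm1}. For statement (2), the triviality argument above uses only the continuous extension $\tilde\phi$ and the topology of the punctured-sphere bundles, so it is entirely unaffected by the $G$-equivariance hypothesis; hence Theorem \ref{thm:mythm2} applies and yields the desired $G$-equivariant holomorphic extension of $\phi$ to $\widehat{\mathbb{C}}$. The only technical point in the outline is the verification that $\Phi$ is a genuine homeomorphism—that is, that the fiberwise inverse $(p,w)\mapsto(p,\tilde\phi(p,\cdot)^{-1}(w))$ is jointly continuous—but this is the standard fact that a continuous family of self-homeomorphisms of a compact Hausdorff space has jointly continuous inverse, so no real obstacle arises.
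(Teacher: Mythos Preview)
Your proof is correct and follows exactly the approach the paper intends: the paper merely remarks that ``the concept of the monodromy of holomorphic motions is topological'' and declares the theorem immediate, while you have spelled out the underlying trivialization argument and then invoked Theorems~\ref{thm:mythm1} and~\ref{thm:mythm2}. There is nothing to add.
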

\begin{Rem}
\begin{enumerate}
  \item In the second statement, we do not assume that the extended continuous motion is $G$-equivariant.
  \item Gardiner, Jiang and Wang \cite{Gardiner2015a} announce that if a holomorphic motion $\phi : X\times E\to \widehat{\mathbb{C}}$ has a guiding quasiconformal isotopy, then $\phi$ is extended to a holomorphic motion of $\widehat{\mathbb{C}}$ over $X$ which extends $\phi$. 
A guiding isotopy is a continuous motion with a quasiconformal nature (see \cite{Gardiner2015a} for the precise definition). Hence, Theorem \ref{thm:mythem3} confirms their result.
\end{enumerate}
\end{Rem}

Now, we consider the triviality of the monodromy. 
In general, it is not easy to see if the monodromy of a holomorphic motion is trivial or not.
However, under a certain condition the monodromy becomes automatically trivial (cf.~\cite{BJMS2012} Lemma 8.1).
\begin{Pro}
	\label{pro:simplyconnectedCase}
	Let $E\subset \widehat{\mathbb{C}}$ be a closed set and $w :\widehat{\mathbb{C}}\to \widehat{\mathbb{C}}$ be a quasiconformal homeomorphism fixing each point of $E$.
	Suppose that $E$ is connected.
	Then, $w$ is homotopic to the identity rel $E$.
\end{Pro}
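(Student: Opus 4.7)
The plan is to handle each connected component of $\widehat{\mathbb{C}}\setminus E$ separately by an Alexander-trick isotopy on the disk, then patch the pieces along $E$ into a single global isotopy from the identity to $w$. First, since $E$ is closed and connected in $\widehat{\mathbb{C}}$, a classical topological fact (a consequence of Alexander duality for $S^2$) ensures that every connected component $U$ of $\widehat{\mathbb{C}}\setminus E$ is simply connected. Because $\partial U\subset E$ is fixed pointwise by $w$ and $w$ is orientation-preserving, a local orientation argument at a boundary point forces $w(U)=U$; otherwise $w(U)$ would be another component with the same boundary $\partial U$, which is incompatible with orientation preservation.

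For each such $U$, I would choose a Riemann map $\varphi:\Delta\to U$ and set $\tilde w:=\varphi^{-1}\circ w\circ\varphi$, a qc self-homeomorphism of $\Delta$. A prime-end argument shows that the continuous extension of $\tilde w$ to $\overline\Delta$ is the identity on $\partial\Delta$: $\varphi$ induces a homeomorphism from $\overline\Delta$ onto the prime-end compactification of $U$; $w$ preserves the cyclic order on prime ends by orientation; and the impression of every prime end is pointwise fixed because $w|_{\partial U}=\mathrm{id}$. The Alexander trick
\[
\tilde H(\lambda,t)=\begin{cases} t\,\tilde w(\lambda/t), & |\lambda|\leq t,\\ \lambda, & |\lambda|>t, \end{cases}\qquad t\in(0,1],
\]
together with $\tilde H(\lambda,0)=\lambda$, then gives a continuous isotopy of $\overline\Delta$ from $\mathrm{id}$ to $\tilde w$ which fixes $\partial\Delta$ pointwise at every time. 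Pushing forward, $H_U(z,t):=\varphi(\tilde H(\varphi^{-1}(z),t))$ is an isotopy of $\overline U$ from $\mathrm{id}_{\overline U}$ to $w|_{\overline U}$ fixing $\partial U$ for every $t$.

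Finally I would assemble the global isotopy $H:\widehat{\mathbb{C}}\times[0,1]\to\widehat{\mathbb{C}}$ by $H(z,t)=H_U(z,t)$ when $z\in U$ and $H(z,t)=z$ when $z\in E$. Continuity inside each component is immediate, and at $t=1$ the map $H(\cdot,1)=w$ is continuous on all of $\widehat{\mathbb{C}}$. The hard part will be continuity at a point $z_0\in E$ for $t<1$: if $z_n\to z_0$ with $z_n$ in components $U_n$, then $H(z_n,t)$ is either $z_n$ (when $|\varphi_n^{-1}(z_n)|>t$) or a point of $\varphi_n(t\,\overline\Delta)\subset U_n$. When the $z_n$ all lie in a single fixed component this is easy, since the preimage under a conformal map of any compactum is compact, forcing $|\varphi^{-1}(z_n)|\to 1$; in the general case one must control the $\varphi_n$-images of a disk of fixed radius as the components $U_n$ vary with $n$, and this (together with the prime-end verification of $\tilde w|_{\partial\Delta}=\mathrm{id}$ when $\partial U$ is badly shaped) constitutes the main technical obstacle.
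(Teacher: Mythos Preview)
The paper does not prove this proposition; it is quoted from \cite{BJMS2012}, Lemma~8.1, so there is no in-paper argument to compare against. Your overall strategy---reduce to the simply connected components of $\widehat{\mathbb C}\setminus E$ and isotope on each---is the natural one, and the prime-end step (that $\tilde w$ extends to the identity on $\partial\Delta$) is correct: since $w$ is an orientation-preserving homeomorphism of $\widehat{\mathbb C}$ fixing $\partial U$ pointwise and sending $U$ to itself, it acts trivially on the prime ends of $U$; this is standard Carath\'eodory theory, though it does require the care you anticipate.

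The continuity obstacle you name last, however, is a genuine gap that the Alexander trick does not close. At a point $z_0\in E$ where infinitely many long, thin components $U_n$ accumulate---each crossing a fixed annulus about $z_0$---the sets $\varphi_n(t\overline\Delta)$ carry no uniform geometric control, and $H_{U_n}(\cdot,t)$ can move points arbitrarily close to $z_0$ a definite distance inside $U_n$; so your $H(\cdot,t)$ need not be continuous at $z_0$. A cleaner route, using machinery already assembled in the paper, avoids the hand-made gluing: by Lieb's isomorphism (Proposition~\ref{Pro:Lieb}), one has $[\mu_w]_E=[0]_E$ in $T(E)$ once (i) $\mu_w|_E=0$ a.e., which follows from $w|_E=\mathrm{id}$ since a.e.\ point of $E$ is a Lebesgue density point and hence $Dw=\mathrm{I}$ there, and (ii) for each component $X_i$ the class of $\mu_w|_{X_i}$ in $T(X_i)$ is the basepoint, which is exactly the prime-end statement $\tilde w_i|_{\partial\Delta}=\mathrm{id}$. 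The equality $[\mu_w]_E=[0]_E$ is, by the definition of $T(E)$, precisely the assertion that $w$ is homotopic to the identity rel $E$, with the global continuity of the isotopy absorbed into Lieb's theorem.
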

A discrete subgroup of $\textrm{M\"ob}(\mathbb{C})$ is called a Kleinian group.
The limit set $\Lambda(G)$ of a Kleinian group $G$ is defined by the closure of the set of fixed points of elements of $G$ with infinite order.
The limit set $\Lambda(G)$ of $G$ is closed and invariant under the action of $G$.
A Kleinian group $G$ is called \emph{non-elementary} if the limit set $\Lambda(G)$ contains more than two points.

From Theorems \ref{thm:mythm1}, \ref{thm:mythm2} and Proposition \ref{pro:simplyconnectedCase}, immediately we have
\begin{Cor}
\label{Cor:connected case}
	Let $\phi : X\times E\to \widehat{\mathbb{C}}$ be a holomorphic motion of $E$ over a Riemann surface $X$.
	Suppose that $E$ is connected. 
	Then, $\phi$ can be extended to a holomorphic motion of $\widehat{\mathbb{C}}$ over $X$.
	
	Furthermore, if the holomorphic motion $\phi$ is $G$-equivariant for  a subgroup $G$ of $\textrm{M\"ob}(\mathbb C)$, then
	the holomorphic motion can be extended to a $G$-equivariant holomorphic motion of $\widehat{\mathbb{C}}$ over $X$.
\end{Cor}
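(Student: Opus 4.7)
The plan is to deduce this corollary directly from Theorem \ref{thm:mythm1} and Theorem \ref{thm:mythm2} by showing that, when $E$ is connected, the monodromy of any holomorphic motion $\phi : X\times E\to\widehat{\mathbb{C}}$ is automatically trivial. Once the triviality is established, Theorem \ref{thm:mythm1} supplies the extension in the general case and Theorem \ref{thm:mythm2} supplies a $G$-equivariant extension when $\phi$ is $G$-equivariant, regardless of the fact that the intermediate auxiliary extension we will construct may not itself be $G$-equivariant.

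First I would lift $\phi$ to the universal cover. Let $\pi:\tilde{X}\to X$ be the holomorphic universal covering with deck group $\Gamma$, choose $\tilde{p}_0\in\pi^{-1}(p_0)$, and set $\tilde\phi(\tilde p,z):=\phi(\pi(\tilde p),z)$ on $\tilde X\times E$. Since $\tilde X$ is simply connected, Theorem \ref{thm:old}(2) extends $\tilde\phi$ to a holomorphic motion $\hat{\tilde\phi}:\tilde X\times\widehat{\mathbb{C}}\to\widehat{\mathbb{C}}$ of the Riemann sphere. For each $\gamma\in\Gamma$, define
\[
   W_\gamma(z):=\hat{\tilde\phi}(\gamma\tilde p_0, z),\qquad z\in\widehat{\mathbb{C}}.
\]
Then $W_\gamma$ is a normalized quasiconformal self-map of $\widehat{\mathbb{C}}$, and for every $z\in E$ one has
\[
   W_\gamma(z)=\tilde\phi(\gamma\tilde p_0,z)=\phi(\pi(\gamma\tilde p_0),z)=\phi(p_0,z)=z,
\]
so $W_\gamma$ fixes the connected closed set $E$ pointwise. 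Proposition \ref{pro:simplyconnectedCase} then applies to conclude that $W_\gamma$ is homotopic to the identity rel $E$, and therefore rel every finite subset $E'\subset E$ as well.

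The main piece of bookkeeping — and essentially the only real obstacle — is to identify $[W_\gamma]$, as a mapping class of $\widehat{\mathbb{C}}\setminus E'$, with the monodromy image $\rho_{E'}(\sigma_\gamma)\in\mathrm{Mod}(0,\#E')$ associated to the element $\sigma_\gamma\in\pi_1(X,p_0)$ corresponding to $\gamma$. This is however built into the construction: the restriction of $\hat{\tilde\phi}$ to $\tilde X\times(\widehat{\mathbb{C}}\setminus E')$ trivializes the holomorphic punctured-sphere fibration attached to $\phi|_{E'}$ over the simply connected $\tilde X$, and the discrepancy between the two trivializations obtained at the distinct points $\tilde p_0$ and $\gamma\tilde p_0$ lying over the same basepoint $p_0$ is, by definition, the monodromy along $\sigma_\gamma$; unraveling this identification gives $\rho_{E'}(\sigma_\gamma)=[W_\gamma|_{\widehat{\mathbb{C}}\setminus E'}]=[\mathrm{id}]$. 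Consequently the monodromy of $\phi$ is trivial for every finite $E'\subset E$, and invoking Theorem \ref{thm:mythm1} (respectively Theorem \ref{thm:mythm2} in the $G$-equivariant setting) produces the desired extension to a holomorphic motion of $\widehat{\mathbb{C}}$ over $X$.
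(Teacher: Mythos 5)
Your proposal is correct and follows essentially the same route as the paper, which deduces the corollary ``immediately'' from Proposition \ref{pro:simplyconnectedCase} together with Theorems \ref{thm:mythm1} and \ref{thm:mythm2}. Your lift to the universal cover, the Slodkowski extension there, and the identification of the monodromy class $\rho_{E'}(\sigma_\gamma)$ with $[W_\gamma]$ merely spell out the step the paper treats as immediate (and which it carries out in the same way in the proof of Lemma \ref{lemma:Kra}), so no further changes are needed.
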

\begin{Rem}
	If the limit set $\Lambda(G)$ of a non-elementary Kleinian group $G$ is not connected, then there is a $G$-equivariant holomorphic motion of $\Lambda(G)$ over the punctured disk $\Delta^{*}:=\{0<|z|<1\}$ which cannot be extended to a holomorphic motion of $\widehat{\mathbb{C}}$ over $\Delta^{*}$ (\cite{Shiga2016} Theorem II).
	
\end{Rem}

\subsection{A geometric condition for the extendability of holomorphic motions}
Let $E=\{0, 1, \infty, z_1, \dots, z_{n}\}$ be a finite subset of $(n+3)$ points in $\widehat{\mathbb{C}}$.
On the Riemann surface $S_{E}:=\widehat{\mathbb{C}}\setminus E$, we may define the hyperbolic metric which is the projection of the hyperbolic metric ${(1-|z|^2)^{-1}}|dz|$ of the unit disk, the universal covering of $S_E$. 

For the set $E$, we consider the following quantity:
\begin{equation}
	\label{dfn:minimal Length in Config}
	L(E)=\min \left\{\log (2+\sqrt{5}), \frac{1}{2}\log \left (\left (\frac{\ell(E)}{\pi}\right )^2+1\right )\right \},
\end{equation} 
where $\ell (E)$ is the minimal length of non-trivial and non-peripheral closed curves in $S_E$ with respect to the hyperbolic metric on $S_E$.
Then, we have the following.
\begin{Mythm}
\label{thm:mythm4}
	Let $X$ be a hyperbolic Riemann surface with a basepoint $p_0$. 
	Suppose that the fundamental group $\pi_1 (X, p_0)$ is of finitely generated and there exist closed curves $\gamma_1, \gamma_2, \dots , \gamma_m$ passing though $p_0$ such that the homotopy classes of those curves generate $\pi_1 (X, p_0)$ and
\begin{equation}
\label{eqn:shortLength}
	\ell (\gamma_j) < L(E) \quad (j=1, 2, \dots, m)
\end{equation}
hold, where $\ell (\gamma)$ is the hyperbolic length of a curve $\gamma\subset X$.
Then, every holomorphic motion $\phi : X\times E\to \widehat{\mathbb{C}}$ of $E$ over $X$ can be extended to a holomorphic motion of $\widehat{\mathbb{C}}$ over $E$.
\end{Mythm}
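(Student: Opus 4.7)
By Theorem \ref{thm:mythm1}, $\phi$ extends to a holomorphic motion of $\widehat{\mathbb{C}}$ over $X$ if and only if its monodromy is trivial. Since the classes $[\gamma_1],\dots,[\gamma_m]$ generate $\pi_1(X,p_0)$, it suffices to prove $\rho([\gamma_j])=[\mathrm{id}]$ in $\mathrm{Mod}(0,n+3)$ for each $j$. Because $\phi$ is a normalized holomorphic motion tracking each point of $E$ back to itself around any loop, the monodromy image lies in the pure mapping class group $P\mathrm{Mod}(S_E)$, where $S_E=\widehat{\mathbb{C}}\setminus E$; this group is trivial when $|E|=3$ and torsion-free when $|E|\geq 4$, so I may assume $|E|\geq 4$.

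The plan is to convert the length hypothesis into a displacement bound on Teichm\"uller space, via the universal holomorphic motion. Let $\pi:\tilde X\to X$ be the universal cover, fix $\tilde p_0\in\pi^{-1}(p_0)$, and apply Proposition \ref{Pro:universal} to the lifted motion $\tilde\phi(\tilde p,z):=\phi(\pi(\tilde p),z)$, obtaining a holomorphic classifying map $\tilde F:\tilde X\to T(E)$ with $\tilde F(\tilde p_0)=[\mathrm{id}]_E$ and equivariance $\tilde F(\sigma\tilde p_0)=\rho([\sigma])\cdot\tilde F(\tilde p_0)$ for $\sigma\in\pi_1(X,p_0)$. Since the Teichm\"uller metric $d_T$ coincides with the Kobayashi metric on $T(E)$, the map $\tilde F$ is distance-decreasing from the Poincar\'e metric on $\tilde X\simeq\Delta$. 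Choosing the lift of $\gamma_j$ starting at $\tilde p_0$, the hyperbolic distance in $\tilde X$ from $\tilde p_0$ to its endpoint is bounded by $\ell(\gamma_j)$, so
\begin{equation*}
  d_T\bigl([\mathrm{id}]_E,\;\rho([\gamma_j])\cdot[\mathrm{id}]_E\bigr)\;\leq\;\ell(\gamma_j)\;<\;L(E).
\end{equation*}

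The heart of the proof is then the complementary displacement estimate: \emph{every non-trivial $\psi\in P\mathrm{Mod}(S_E)$ satisfies $d_T([\mathrm{id}]_E,\psi[\mathrm{id}]_E)\geq L(E)$.} I would prove it by Nielsen--Thurston, which in the torsion-free setting yields two cases. If $\psi$ is pseudo-Anosov, then $d_T([\mathrm{id}]_E,\psi[\mathrm{id}]_E)\geq\log K(\psi)\geq\log(2+\sqrt 5)$ by the classical uniform lower bound on pseudo-Anosov dilatations on marked spheres. If $\psi$ is reducible, some positive power of $\psi$ contains a nontrivial Dehn twist factor along an invariant essential simple closed geodesic $\alpha\subset S_E$. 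The Teichm\"uller disk generated by the Jenkins--Strebel quadratic differential with horizontal cylinder of core $\alpha$ embeds isometrically into $T(E)$; in upper half-plane coordinates with $[\mathrm{id}]_E$ at height $M$ (the modulus of the cylinder), the twist acts as $\tau\mapsto\tau+1$, giving
\begin{equation*}
  d_T\bigl([\mathrm{id}]_E,\;T_\alpha[\mathrm{id}]_E\bigr)\;=\;\tfrac12\operatorname{arccosh}\!\bigl(1+\tfrac{1}{2M^2}\bigr)\;\geq\;\tfrac12\log\!\bigl(1+\tfrac{1}{M^2}\bigr).
\end{equation*}
Every embedded annulus in $S_E$ homotopic to $\alpha$ factors through the annular cover $\mathbb H/\langle g_\alpha\rangle$ of $S_E$, whose modulus is exactly $\pi/\ell_{S_E}(\alpha)$, so $M\leq\pi/\ell_{S_E}(\alpha)\leq\pi/\ell(E)$; hence $d_T\geq\tfrac12\log(1+(\ell(E)/\pi)^2)$. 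Combining the two cases yields the bound $L(E)$, contradicting the upper bound above unless $\rho([\gamma_j])$ is trivial.

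The main obstacle is transferring the single-Dehn-twist estimate to a general reducible element, since $\psi$ may be a composition of twists along several disjoint curves intertwined with pseudo-Anosov pieces on complementary subsurfaces. One has to check that the full Teichm\"uller displacement of such a composite class dominates the displacement of a single elementary twist on one of the reducing curves; this can be handled by passing to an appropriate Jenkins--Strebel differential supported on the annular neighbourhood of a shortest reducing curve, or, alternatively, by projecting to the annulus subsurface and using the Masur--Minsky--type comparison of subsurface projections with Teichm\"uller distance. The remaining ingredients, the pseudo-Anosov dilatation bound and the annular-cover modulus estimate, are classical, so once the reducible displacement estimate is established the theorem follows at once from the Kobayashi-contraction bound derived from Proposition \ref{Pro:universal}.
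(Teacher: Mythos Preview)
Your overall architecture matches the paper's proof exactly: reduce via Theorem~\ref{thm:mythm1} to triviality of the monodromy on generators, lift $\phi$ to the universal cover, apply Proposition~\ref{Pro:universal} to obtain a holomorphic map into $T(E)\cong Teich(S_E)$, and use the Kobayashi/Teichm\"uller contraction to bound $d_T(P_0,\rho([\gamma_j])P_0)\le\ell(\gamma_j)<L(E)$. The paper then invokes the inequality $L(E)\le r_E$ (your ``complementary displacement estimate'') as a black box, quoting it as Theorem~2.1 of \cite{Shiga2013}; it does not attempt to reprove it.

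Where you diverge is in sketching a Nielsen--Thurston proof of that inequality, and there the argument has a genuine problem beyond the reducible-case gap you already acknowledge. The assertion that every pseudo-Anosov $\psi$ on a punctured sphere satisfies $\log K(\psi)\ge\log(2+\sqrt5)$ is not a ``classical'' fact and is in general false: the minimal pseudo-Anosov dilatation on the $n$-punctured sphere tends to $1$ as $n\to\infty$, and there is no evident reason why purity of $\psi$ should restore a uniform positive lower bound independent of $n$. The reason $L(E)\le r_E$ nevertheless holds is subtler: when $|E|$ is large the systole $\ell(E)$ is forced to be small, so $L(E)$ is governed by the second term of the minimum, and one must tie the minimal pseudo-Anosov displacement at the specific basepoint $P_0$ to $\ell(E)$ rather than to an absolute constant. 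Your case split does not supply that link. If you want a self-contained argument you would need to reproduce the actual proof in \cite{Shiga2013}; otherwise, citing that result---as the paper does---already completes your proof.
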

\begin{proof}
	Let $\phi : X\times E\to \widehat{\mathbb{C}}$ be a holomorphic motion of $E$ over a Riemann surface $X$ satisfying (\ref{eqn:shortLength}).
	From Theorem \ref{thm:mythm1}, it suffices to show that the monodromy of $\phi$ is trivial.
	Since the triviality of the monodromy is the triviality of a homomorphism from $\pi_1 (X, p_0)$ to $\textrm{Mod}(0, n+3)$, it is enough to show that each $\gamma_j$ $(j=1, 2, \dots , m)$ gives the identity of $\textrm{Mod}(0, n+3)$. 
	To show it, we consider the action of the pure mapping class group, which is a subgroup of $\textrm{Mod}(0, n+3)$, on the Teichm\"uller space $Teich(S_{E})$.
	
	Let $P\textrm{Mod}(0, n+3)$ denote a subgroup of mapping classes in $\textrm{Mod}(0, n+3)$ whose representatives are quasiconformal maps of $\widehat{\mathbb{C}}$ fixing each point of $E$.
	The subgroup $P\textrm{Mod}(0, n+3)$ is called {\em the pure mapping class group}.
	We consider the infimum of translation lengths of $P\textrm{Mod}(0, n+3)$ of $P_0:=[S_E, id]\in Teich(S_E)$. That is,
	\begin{equation}
	\label{Dfn:InjRadius}
		r_{E}=\inf \{d_{T}(P_0, \chi (P_0)) : \chi\in P\textrm{Mod}(0, n+3)\setminus\{id\}\},
	\end{equation}
	where $d_T$ is the Teichm\"uller distance defined in \S 2.
	Then, we have shown the following.
	\begin{Pro}[ (\cite{Shiga2013} Theorem 2.1) ]
	\label{Pro:configulationSpace}
		$$
		L(E)\leq r_{E}.
		$$
	\end{Pro}
	
	Let $\Gamma$ be a Fuchsian group on $\Delta$ which represents $X$.
	For the natural projection $\pi : \Delta\to \Delta/\Gamma=X$, the map $\tilde{\phi}(\lambda , z):=\phi (\pi (\lambda ), z)$ $(\lambda\in \Delta, z\in E)$ defines a holomorphic motion of $E$ over $\Delta$.
	We see from Proposition \ref{Pro:universal} that there exists a holomorphic map $\Phi : \Delta \to Teich(S_E)$ such that $\Phi (\lambda)$ gives $\widehat{\mathbb{C}}\setminus \tilde{\phi}(\pi (\lambda), E)$ as a point in $Teich(S_E)$ for any $\lambda\in \Delta$.
	
	We may take $0\in \Delta$ so that $\pi (0)=p_0\in X$.
	Then, we have $\Phi (0)=P_0$. 
	Let $\tilde{\gamma}_j$ be a lift of $\gamma_j$ on $\Delta$ which begins at $0$ $(j=1, 2, \dots, m)$.
	Then, there exists $g_j\in \Gamma$ such that $g_j (0)$ is the end point of $\tilde{\gamma}_j$.
	Since $\pi (g_j (0))=\pi (0)=p_0$, we have $\tilde{\phi}(g_j (0), z)=\tilde{\phi}(0, z)=z$ for any $z\in E$.
	Hence, there exists $\chi_j\in P\textrm{Mod}(0, n+3)$ such that
	\begin{equation*}
		\Phi (g_j (0))=\chi_j (\Phi (0))=\chi_j (P_0).
	\end{equation*}	
	The Teichm\"uller distance is the Kobayashi distance. It follows from the distance decreasing property of the Kobayashi distance (cf. \cite{Kobayashi1970}) that
	\begin{equation}
	\label{eqn:distanceDecreasing}
		d_T (P_0, \chi_j (P_0))= d_T (\Phi (0), \Phi (g_j (0)))\leq d_{\Delta}(0, g_j (0)),
	\end{equation} 
	where $d_{\Delta}$ is the hyperbolic distance in $\Delta$, which is the Kobayashi distance of $\Delta$.

	On the other hand, since $\tilde{\gamma}_j$ is an arc from $0$ to $g_j(0)$, we have 
	\begin{equation}
	\label{eqn:LengthIneq}
		\ell (\gamma_j)\geq d_{\Delta}(0, g_j (0)).
	\end{equation}
	Combining (\ref{eqn:shortLength}), (\ref{eqn:distanceDecreasing}), (\ref{eqn:LengthIneq}) and Proposition \ref{Pro:configulationSpace}, we have
	\begin{equation*}
		d_T (P_0, \chi_j (P_0))\leq \ell (\gamma_j) <L(E)\leq r_{E}
	\end{equation*}
	for each $j$ $(1\leq j\leq m)$. 
	Therefore, we conclude that $\chi_j=id$ for every $j$ $(j=1, 2, \dots , m)$ because of (\ref{Dfn:InjRadius}).
	Thus, we have proved that the monodromy of the holomorphic motion $\phi$ is trivial.
	
\end{proof}
Let $A$ be an annulus $\{1<|\lambda|<R\}$ $(R>1)$. Then, the curve $\alpha:= \{|\lambda|=\sqrt{R}\}$ is the shortest closed curve with respect to the hyperbolic metric on $A$ which generates the fundamental group of $A$. 
It is not hard to see that the hyperbolic length of $\alpha$ is $\pi^2/\log R$.
Hence, from Theorem \ref{thm:mythm4} we have the following.
\begin{Cor}
Let $E$ be a finite set in $\widehat{\mathbb{C}}$ and $A=\{1<|\lambda|<R\}$ be an annulus with a basepoint $p_0=\sqrt{R}$. Suppose that
$$
R>\exp \left \{\frac{\pi^2}{L(E)}\right \}.
$$	
Then, any holomorphic motion of $E$ over $A$ is extended to a holomorphic motion of $\widehat{\mathbb{C}}$ over $A$.
\end{Cor}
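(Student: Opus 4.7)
My plan is to derive the corollary as a direct specialization of Theorem \ref{thm:mythm4} to the case $X = A$. The fundamental group $\pi_1(A, p_0)$ is infinite cyclic, generated by the core curve $\alpha = \{|\lambda| = \sqrt{R}\}$ through the basepoint $p_0 = \sqrt{R}$. So the system of generators in the hypothesis of Theorem \ref{thm:mythm4} consists of a single loop, namely $\gamma_1 = \alpha$, and the only inequality one has to verify is $\ell(\alpha) < L(E)$ in the hyperbolic metric of $A$.

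The first step is to recall (as already stated in the excerpt just before the corollary) that the hyperbolic length of the core geodesic of $A = \{1 < |\lambda| < R\}$ equals $\pi^2/\log R$. The second step is then the elementary observation that the assumption
\begin{equation*}
  R > \exp\!\left(\frac{\pi^2}{L(E)}\right)
\end{equation*}
is equivalent, after taking logarithms and inverting, to
\begin{equation*}
  \ell(\alpha) = \frac{\pi^2}{\log R} < L(E).
\end{equation*}
Thus the single generator $\alpha$ of $\pi_1(A, p_0)$ satisfies the length condition (\ref{eqn:shortLength}) in Theorem \ref{thm:mythm4}.

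Finally, I would invoke Theorem \ref{thm:mythm4} directly: since $\pi_1(A, p_0)$ is finitely generated (in fact cyclic) and admits a system of generators (namely $\{\alpha\}$) whose hyperbolic lengths are bounded above by $L(E)$, every holomorphic motion $\phi : A \times E \to \widehat{\mathbb{C}}$ extends to a holomorphic motion of $\widehat{\mathbb{C}}$ over $A$. There is no real obstacle in this argument; the only conceivable subtlety is to confirm that $p_0 = \sqrt{R}$ lies on $\alpha$ (it does, by construction) and that $\alpha$ is indeed a geodesic representative in its homotopy class, so that its length is the infimum of lengths over the class. Both are standard facts about the hyperbolic structure of an annulus.
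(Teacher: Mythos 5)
Your proposal is correct and follows essentially the same route as the paper: identify the core curve $\alpha=\{|\lambda|=\sqrt{R}\}$ as a generator of $\pi_1(A,p_0)$ of hyperbolic length $\pi^2/\log R$, note that the hypothesis $R>\exp\{\pi^2/L(E)\}$ is exactly $\ell(\alpha)<L(E)$, and apply Theorem \ref{thm:mythm4}. No gap to report.
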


\subsection{Lifting problems}
\label{subsec:lifting}
As we have seen in \S \ref{subsection:Teich}, there is a surjective map from the space of Beltrami coefficients to the Teichm\"uller space. 
The Douady-Earle section in \S \ref{subsection:Douady-Earle} gives the inverse map of the surjective map; however, the section is not holomorphic.

It is known that there exists no holomorphic inverses on the Teichm\"uller space. We show that any holomorphic map from a Riemann surface to the Teichm\"uller space can be lifted to a holomorphic map to the space of Beltrami coefficients. We consider the problem for two kinds of Teichm\"uller spaces, Teichm\"uller space of a Riemann surface and that of a closed set, separately.

{\bf Teichm\"uller space of a Riemann surface.}
Let $S$ be a hyperbolic Riemann surface represented by a Fuchsian group $\Gamma$ on $\Delta$. Let $\pi_{\Gamma} : \textrm{Belt}(\Gamma)\to Teich(S)$ be the holomorphic projection defined in \S \ref{subsection:Teich}. Then, we have the following.

\begin{Mythm}
	\label{Mythm:SectionRiemann}
	Let $F : X\to Teich (S)$ be a holomorphic map from a Riemann surface $X$ to the Teichm\"uller space $Teich (S)$ of a Riemann surface $S$ represented by a Fuchsian group $\Gamma$ on $\Delta$.
	Then, there exists a holomorphic map $\tilde{F}$ from $X$ to $\textrm{Belt}(\Gamma)$ which satisfies the following commutative diagram.
		\begin{equation*}
		\xymatrix{
		 & \textrm{Belt}(\Gamma) \ar[d]^{\pi_{\Gamma}} \\
		 X \ar[ru]^{\widetilde{F}}\ar[r]_{F} &  Teic(S)		 }
	\end{equation*}
\end{Mythm}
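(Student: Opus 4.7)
The plan is to associate to $F$ a natural $\Gamma$-equivariant holomorphic motion of $\partial\Delta$ over $X$, extend it via Theorem \ref{thm:mythm2}, and extract the Beltrami coefficient of the extension.

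First, I would fix three distinct boundary points $\zeta_1, \zeta_2, \zeta_3 \in \partial \Delta$ and for each $p \in X$ take $\varphi_p$ to be the unique $\Gamma$-compatible quasiconformal self-map of $\Delta$ in the class $F(p)$ whose boundary extension fixes $\zeta_1, \zeta_2, \zeta_3$. Setting $\phi(p, z) = \varphi_p(z)$ on $X \times \partial\Delta$, the holomorphicity of $F$ combined with the Ahlfors--Bers holomorphic dependence of normalized solutions of the Beltrami equation gives that $\phi(\cdot, z)$ is holomorphic for each $z$, and the isomorphisms $\theta_p(\gamma) := \varphi_p \gamma \varphi_p^{-1}$ make $\phi$ a $\Gamma$-equivariant holomorphic motion of $\partial\Delta$ over $X$. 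Along any loop based at $p_0$, the monodromy $\phi_\gamma$ lies in the Teichm\"uller class of the identity in $Teich(S)$, so it coincides with some M\"obius transformation $g$ on $\partial\Delta$; but $g$ must fix the three normalization points, so $g = \mathrm{id}$, and the monodromy of $\phi$ is trivial.

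By Theorem \ref{thm:mythm2}, I obtain a $\Gamma$-equivariant holomorphic motion $\widehat\phi : X \times \widehat{\mathbb{C}} \to \widehat{\mathbb{C}}$ extending $\phi$. Since each $\widehat\phi(p, \cdot)$ maps $\partial\Delta$ to itself and is continuous, it preserves $\Delta$ and yields a $\Gamma$-compatible quasiconformal self-map $\Psi_p$ of $\Delta$ with $[\Psi_p]_\Gamma = F(p)$. Defining $\tilde F(p) := \bar\partial \Psi_p / \partial \Psi_p \in \mathrm{Belt}(\Gamma)$ then satisfies $\pi_\Gamma \circ \tilde F = F$ by construction.

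The hardest part will be showing that $\tilde F$ is holomorphic as a map into the Banach space $L^\infty(\Delta)$. The Remark following Theorem \ref{thm:old} notes that no global holomorphic section of $\pi_\Gamma$ exists once $\dim Teich(S) > 1$, so $\tilde F$ cannot simply be $F$ composed with a single section. Instead, I would exploit the one-dimensionality of $X$: the Ahlfors--Weill formula supplies a local holomorphic section $s^{AW}$ of $\pi_\Gamma$ on a neighborhood of any point of $Teich(S)$ after a suitable basepoint change, and two local holomorphic lifts of $F$ differ by a vertical map into the contractible fibers of $\pi_\Gamma$. A patching argument on the Riemann surface $X$, using the vanishing of $H^1$ of the relevant sheaf (directly on an open Riemann surface, which is Stein, and via a deformation argument in the compact case) then globalizes the local $L^\infty$-holomorphic lifts to a globally holomorphic $\tilde F$. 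Equivalently, one could refine the Slodkowski-type construction underlying Theorem \ref{thm:mythm2} to produce an extension whose Beltrami coefficient is manifestly $L^\infty$-holomorphic in $p$; the key point either way is to upgrade the pointwise holomorphicity of $\widehat\phi(\cdot, z)$ to strong holomorphic dependence in $L^\infty$.
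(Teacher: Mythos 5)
Your construction breaks down at the very first step. The family you define, $\phi(p,z)=\varphi_p(z)$ for $z\in\partial\Delta$, takes its values in the unit circle, since each $\varphi_p$ is a quasiconformal self-map of $\Delta$ and its boundary extension maps $\partial\Delta$ to itself. If this were a holomorphic motion, then for each fixed $z$ the trajectory $p\mapsto\phi(p,z)$ would be a holomorphic function on the connected surface $X$ with $|\phi(p,z)|\equiv 1$, hence constant (open mapping theorem, or: $\phi$ and $1/\phi=\overline{\phi}$ both holomorphic); since $\phi(p_0,z)=z$ this forces $\varphi_p=\mathrm{id}$ on $\partial\Delta$ for all $p$, i.e.\ $F$ constant. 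So for a nonconstant $F$ the claimed holomorphicity of $\phi(\cdot,z)$ is simply false, and no nontrivial holomorphic motion of $\partial\Delta$ that keeps $\partial\Delta$ invariant can exist. The appeal to Ahlfors--Bers does not rescue this: it presupposes a holomorphic family $p\mapsto\mu(p)$ of Beltrami coefficients, which is exactly the lift you are trying to construct, and in any case the circle-preserving normalization involves an anti-holomorphic reflection of the coefficient across $\partial\Delta$, so the normalized circle-preserving solutions depend only real-analytically, not holomorphically, on Teichm\"uller parameters. Everything downstream (the monodromy argument, the claim that $\widehat\phi(p,\cdot)$ preserves $\Delta$) inherits this defect.

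The paper avoids the circle entirely by using the Bers embedding: composing $F$ with $\beta$ gives bounded quadratic differentials $\Phi(\lambda)=\beta(F(\lambda))$, and the associated conformal maps $f^{\Phi(\lambda)}$ on $\overline{\Delta}^{\,c}$ depend holomorphically on $\lambda$ (solutions of the Schwarzian equation), so $\phi(\lambda,z)=f^{\Phi(\lambda)}(z)$ is a genuine $\Gamma$-equivariant holomorphic motion of the \emph{exterior} of the closed disk, whose points are free to move into open regions of $\mathbb{C}$. One then extends to $\Delta^c$ by the $\lambda$-lemma and, since $\Delta^c$ is connected, applies Corollary \ref{Cor:connected case} (so no monodromy computation is needed) to get a $\Gamma$-equivariant holomorphic motion of $\widehat{\mathbb{C}}$ over $X$. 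Finally, the holomorphicity of $\widetilde F(\lambda)=\overline\partial\widehat\phi(\lambda,\cdot)/\partial\widehat\phi(\lambda,\cdot)$, which you flag as the hardest point and propose to handle with Ahlfors--Weill sections and a sheaf-patching argument, is not attacked this way at all: it is exactly Theorem 2 of Earle--Kra--Krushkal \cite{Earle1994}, which says the Beltrami coefficient of a holomorphic motion of $\widehat{\mathbb{C}}$ varies holomorphically in the parameter. Your proposed patching of local lifts through the fibers of $\pi_{\Gamma}$ is in any case not a cocycle problem for a sheaf of abelian groups, so the $H^1$-vanishing argument would not go through as stated.
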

\begin{proof}
	To prove the theorem, we introduce the Bers embedding of $Teich(S)$ (cf. \cite{Imayosh1992}).
	
	For each $\mu\in \textrm{Belt}(\Gamma)$, we put
	\begin{equation}
	\label{dfn:tildemu}
		\tilde\mu (z)=\begin{cases}
			\mu (z), \quad &(z\in \Delta) \\
			0, \quad &(z \in \Delta^c).
		\end{cases}
	\end{equation}
	Then, the function $\tilde \mu$ belongs to $M(\mathbb C)$, the space of Beltrami coefficients on $\mathbb C$. 
	Moreover, it is $\Gamma$-compatible, namely, it satisfies
	\begin{equation*}
\tilde \mu (\gamma (z))\frac{\overline{\gamma '(z)}}{\gamma '(z)}=\tilde \mu (z) 
	\end{equation*}
	almost everywhere in $\mathbb C$.
	
	We take a quasiconformal map $f^{\mu}$ on $\widehat{\mathbb C}$ for $\tilde \mu$. The map is a solution of the Beltrami equation
	\begin{equation*}
		\overline\partial f=\tilde \mu \partial f
	\end{equation*}
	in $\mathbb C$.
	From the definition (\ref{dfn:tildemu}), the map $f^{\mu}$ is conformal in ${\overline{\Delta}}^{c}$. We normalize $f^{\mu}$ by
	\begin{equation*}
		f^{\mu}(z)=z+ O\left (\frac{1}{z}\right )
	\end{equation*}
	as $z\to \infty$.
	We see that $f^{\mu}$ is uniquely determined by $\mu$.
	
	Since $\tilde \mu$ is $\Gamma$-compatible, there exists an isomorphism $\Theta_{\mu} : \Gamma\to \textrm{M\"ob}(\mathbb C)$ such that
	\begin{equation}
	\label{eqn:eqivf^mu}
		f^{\mu}(\gamma (z))=\Theta_{\mu}(\gamma)(f^{\mu}(z))
	\end{equation} 
	holds for every $\gamma\in \Gamma$ and for every $z\in \mathbb{C}$. 
	It is known that  $f^{\mu}$ and the isomorphism $\Theta_{\mu}$ depend only on $[\mu]_{\Gamma}\in Teich(S)$.
	Therefore, we may denote them by $f^{[\mu]_{\Gamma}}$ and $\Theta_{[\mu]_{\Gamma}}$, respectively.
	The group $\Gamma_{[\mu]_{\Gamma}}:= \Theta_{[\mu_{\Gamma}]}(\Gamma)$ is a quasiconformal conjugate to $\Gamma$ called a quasi-Fuchsian group and its limit set $\Lambda(\Gamma_{[\mu]_{\Gamma}})$ is a Jordan curve.
	
	Taking the Schwarzian derivative $\{f^{[\mu]_{\Gamma}}, \cdot\}$ of $f^{[\mu]_{\Gamma}}$ in ${\overline{\Delta}}^{c}$, we have from (\ref{eqn:eqivf^mu})
	\begin{equation*}
		\{f^{[\mu]_{\Gamma}}, \gamma (z)\}\gamma'(z)^2=\{f^{[\mu]_{\Gamma}}, z\}\quad (z\in {\overline{\Delta}}^{c}).
	\end{equation*}
	The above equation shows that the Schwarzian derivative $\{f^{[\mu]_{\Gamma}}, \cdot\}$ is a holomorphic quadratic differential for $\Gamma$; it is bounded, i. e.,
	\begin{equation*}
		\sup_{z\in {\overline{\Delta}}^{c}}\rho_{{\overline{\Delta}}^{c}}^{-2}|\{f^{[\mu]_{\Gamma}}, z\}|<+\infty,
	\end{equation*}
	where $\rho_{{\overline{\Delta}}^{c}}(z)|dz|$ is the Poincar\'e metric on ${\overline{\Delta}}^{c}$. 
	Since the Schwarzian derivative depends only on $[\mu]_{\Gamma}(=\pi_{\Gamma}(\mu))$, we may denote it by $\beta ([\mu]_{\Gamma})$.
	
	Thus, we have a map $\beta$ on the Teichm\"uller space $Teich(S)$ of $S$ to the space of bounded holomorphic quadratic differentials on ${\overline{\Delta}}^{c}$ for $\Gamma$.
	It is known that the map $\beta$ is injective and holomorphic on $Teich(S)$ and it is called the {\em Bers embedding} of $Teich(S)$.
	
	Now, we consider a holomorphic map $\Phi:=\beta\circ F$ on $X$.
	Then, the conformal maps $f^{\Phi (\lambda)}$ $(\lambda\in X)$, which are solutions of $\{f, z\}=\Phi (\lambda)(z)$ on ${\overline{\Delta}}^{c}$, depend holomorphically on $\lambda\in X$.
	Therefore, we have a holomorphic motion $\phi$ of ${\overline{\Delta}}^{c}$ over $X$ defined by
	\begin{equation}
		\phi (\lambda, z)=f^{\Phi (\lambda)}(z),\quad (\lambda, z)\in X\times {\overline{\Delta}}^{c}.
	\end{equation}
	
	It follows from the $\lambda$-lemma that the holomorphic motion is extended to a holomorphic motion of $\Delta ^c$ over $X$.
	The extended holomorphic motion is $\Gamma$-equivariant because of (\ref{eqn:eqivf^mu}). Since $\Delta^c$ is connected, there exists a $\Gamma$-equivariant holomorphic motion $\widehat \phi$ of $\widehat{\mathbb C}$ over $X$ which extends $\phi$ by Corollary \ref{Cor:connected case}.
	Define 
	\begin{equation*}
		\widetilde F (\lambda)=\frac{\overline \partial \widehat \phi (\lambda, \cdot)}{\partial \widehat \phi (\lambda, \cdot)} \in \textrm{Belt}(\Gamma),\quad (\lambda\in X)
	\end{equation*}
	then it is a lift of $F$ from the construction. Furthermore, it is holomorphic by Theorem 2 in \cite{Earle1994}; we obtain a holomorphic lift $\widetilde F$ of $F$ as desired.

\end{proof}

{\bf Teichm\"uller space of a closed set.}
Let $E$ be a closed set in $\widehat{\mathbb C}$. We consider the lifting problem for $T(E)$.
First, we note a relationship between a holomorphic maps $F : X\to T(E)$ and holomorphic motions with trivial monodromy.

Let $\phi : X\times E\to \widehat{\mathbb{C}}$ be a normalized holomorphic motion of $E$ over a Riemann surface $X$.
We may assume that the universal covering $\widetilde{X}$ of $X$ is conformally equivalent to the unit disk $\Delta$ unless the holomorphic motion is trivial.
% Namely, $\phi (p, z)=z$ for any $p\in X$.

Indeed, if $\widetilde{X}$ is conformally equivalent to the Riemann sphere or the complex plane, then $\widetilde{X}$ does not admit a non-constant bounded holomorphic function on $\widetilde{X}$. 
It follows from Theorem 1 in \cite{Mitra2010} that $\phi\circ\pi$ is a trivial holomorphic motion of $E$ over $\widetilde{X}$, where $\pi : \widetilde{X}\to X$ is a universal covering map and $\phi\circ\pi$ is a holomorphic motion of $E$ over $\widetilde{X}$ defined by $\phi\circ\pi (p, z)=\phi (\pi (p), z)$ for $(p, z)\in \widetilde{X}\times E$.
Since $\phi$ is a normalized holomorphic motion, we have $\phi (\cdot, z)\equiv z$ for any $z\in E$.
Hence, $\phi$ is extended to a holomorphic motion trivially.

Let $\Gamma$ be a Fuchsian group acting on $\Delta$ such that $\Delta/\Gamma\simeq X$. 
Then, $\phi\circ\pi$ defined as above is a holomorphic motion of $E$ over $\Delta$.
From the universal property of the Teichm\"uller space $T(E)$, we have a holomorphic map $f :\Delta\to T(E)$ which induces $\phi\circ\pi$ (Proposition \ref{Pro:universal}).
If the monodromy of $\phi$ is trivial, then we verify that $f\circ\gamma=f$ for any $\gamma\in\Gamma$. 
Therefore, $f$ defines a holomorphic map $F$ from $X$ to $T(E)$.

Conversely, any holomorphic map $F : X\to T(E)$ gives a holomorphic motion $\phi_{F} : X\times E\to \widehat{\mathbb{C}}$ by
\begin{equation*}
	\phi_{F}(p, z)=\Psi (F(p), z), \quad (p, z)\in X\times E.
\end{equation*}
We also see that the monodromy of $\phi_{F}$ is trivial from the definition of $T(E)$.

We have constructed a projection $P_E$ from $M(\mathbb C)$ onto $T(E)$ and the real analytic section of $P_E$ in \S \ref{subsection:Douady-Earle}.  
On the other hand, there are no global holomorphic sections of $P_E$ in general.
The following theorem, however, implies that a holomorphic map from a Riemann surface to $T(E)$ can be lifted to $M(\mathbb C)$ via $P_E$. 
\begin{Mythm}
	\label{thm:sectionX}
	Let $F : X\to T(E)$ be a holomorphic map of a Riemann surface $X$ to Teichm\"uller space $T(E)$ of a closed set $E\subset\widehat{\mathbb{C}}$. Then, there exists a holomorphic map $\widetilde{F}$ from $X$ to $M(\mathbb{C})$, the space of Beltrami coefficients on $\mathbb{C}$, which satisfies the following commutative diagram.
	\begin{equation*}
		\xymatrix{
		 & M(\mathbb{C}) \ar[d]^{P_E} \\
		 X \ar[ru]^{\widetilde{F}}\ar[r]_{F} &  T(E)		 }
	\end{equation*}
\end{Mythm}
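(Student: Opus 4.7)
The plan is to convert the holomorphic map $F : X \to T(E)$ into a holomorphic motion of $E$ over $X$ with trivial monodromy, to extend that motion to a holomorphic motion of $\widehat{\mathbb{C}}$ over $X$ by means of Theorem \ref{thm:mythm1}, and then to extract a holomorphic lift by taking its Beltrami coefficients.

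First I would define the holomorphic motion
\begin{equation*}
\phi_F(p, z) := \Psi(F(p), z), \qquad (p, z)\in X\times E,
\end{equation*}
using the universal holomorphic motion $\Psi : T(E)\times E\to\widehat{\mathbb{C}}$. The map $\phi_F$ is a holomorphic motion of $E$ over $X$ because $F$ is holomorphic and $\Psi$ is a holomorphic motion; moreover, as already noted in the discussion preceding the theorem, its monodromy is trivial, since $\phi_F$ is obtained by pulling back the universal motion through a well-defined map into $T(E)$. Applying Theorem \ref{thm:mythm1}, I then obtain an extension of $\phi_F$ to a normalized holomorphic motion $\widehat{\phi} : X\times\widehat{\mathbb{C}}\to\widehat{\mathbb{C}}$.

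Next, for each $p\in X$, the slice $\widehat{\phi}(p, \cdot)$ is a normalized quasiconformal self-map of $\widehat{\mathbb{C}}$, and I would set
\begin{equation*}
\widetilde{F}(p) := \frac{\bar\partial\,\widehat{\phi}(p, \cdot)}{\partial\,\widehat{\phi}(p, \cdot)} \in M(\mathbb{C}).
\end{equation*}
The uniqueness of the normalized solution of the Beltrami equation gives $\widehat{\phi}(p, \cdot) = w^{\widetilde{F}(p)}$; in particular, for $F(p) = [\mu]_E$ and $z\in E$,
\begin{equation*}
w^{\widetilde{F}(p)}(z) = \widehat{\phi}(p, z) = \phi_F(p, z) = \Psi(F(p), z) = w^\mu(z),
\end{equation*}
so $\widetilde{F}(p)$ is $E$-equivalent to $\mu$ and $P_E\circ\widetilde{F} = F$, which is the desired commutativity.

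The only substantive verification that remains is that $\widetilde{F} : X\to M(\mathbb{C})$ is holomorphic. This is handled exactly as in the final step of the proof of Theorem \ref{Mythm:SectionRiemann}: since $\widehat{\phi}$ is a holomorphic motion of $\widehat{\mathbb{C}}$, its associated Beltrami coefficient depends holomorphically on the parameter by Theorem 2 of \cite{Earle1994}. The main obstacle in the whole argument is the extension step, which is precisely Theorem \ref{thm:mythm1}; once that is granted, the remaining work is essentially bookkeeping around the universal property of $T(E)$ together with the standard holomorphic dependence of Beltrami coefficients on the parameter of a holomorphic motion.
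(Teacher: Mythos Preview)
Your proof is correct and follows essentially the same route as the paper: define $\phi_F$ via the universal motion $\Psi$, invoke the trivial-monodromy observation from the discussion preceding the theorem, extend by Theorem~\ref{thm:mythm1}, take Beltrami coefficients, and cite Theorem~2 of \cite{Earle1994} for holomorphy. The only place where your write-up is slightly looser than it looks is the sentence ``so $\widetilde{F}(p)$ is $E$-equivalent to $\mu$'': agreement of $w^{\widetilde{F}(p)}$ and $w^{\mu}$ on $E$ is necessary but not by itself sufficient for $E$-equivalence, which also requires the homotopy rel $E$; the paper glosses over this as well, and it is supplied by the fact that the extended motion $\widehat{\phi}$, being a continuous family of normalized quasiconformal maps over $X$ that restricts to $\phi_F$ on $E$, furnishes the required isotopy (equivalently, one uses the uniqueness of the map in the universal property of $T(E)$).
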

\begin{proof}
	As we have seen above, the holomorphic map $F$ gives a holomorphic motion $\phi_{F}$ of $E$ over $X$ with trivial monodromy.
	Therefore, we have a holomorphic motion $\hat{\phi}_{F} : X\times\widehat{\mathbb{C}}\to\widehat{\mathbb{C}}$ which extends $\phi_{F}$ by Theorem \ref{thm:mythm1}.
	Hence, we have a map $\widetilde{F} : X\to M(\mathbb{C})$ by
	\begin{equation*}
		\widetilde{F}(p)=\frac{\overline{\partial}\hat{\phi}_{F}(p, \cdot)}{\partial\hat{\phi}(p, \cdot)}\in M(\mathbb{C}).
	\end{equation*}
	We see that it is a lift of $F$.
	Moreover, the map $\widetilde{F}$ is holomorphic on $X$ by Theorem 2 in \cite{Earle1994}.
	Hence, $\widetilde F$ is a holomorphic lift of $F$.
\end{proof}

%\includegraphics[scale=.3, bb=0 0 10 10]{Figure2}
%	
%	Furthermore, if $\phi$ is $G$-equivariant for a subgroup $G$ of $\textrm{M\"ob}(\mathbb{C})$, then it is also extended to a $G$-equivariant holomorphic motion of $\widehat{\mathbb{C}}$ over $X$.
%\end{Cor}
%\begin{proof}
%	For the first statement, it suffices to show that the monodromy of $\phi$ is trivial. Let $\{w_1, w_2, \dots , w_n\}$ be a finite subset of $E$ and $\gamma : [0, 1]\to X$ be a parametrization of $\gamma\subset X$.
%	Set $w_i(t)=\phi (\gamma (t), w_i)$.
%	Then, $I\ni t\mapsto (w_i(t), t)\in \mathbb{C}\times I$ $(i=1, 2, \dots , n)$ gives a geometric braid $b$ on $n$ strings. 
%	
%	Since $n(\gamma ; w_i, w_j)=0$ for any distinct points $w_i, w_j\in E$, we verify that the braid $b$ is unknotted, namely, it is $1$.
%	Therefore, by the canonical correspondence between the braid group of $n$ strings and the mapping class group $\textrm{Mod}(0, n)$ (cf. \cite{Kassel2008}) we see that the monodromy of $\phi$ is trivial.
%	
%	The second statement is obtained by the same argument as in \S 6. We may omit it.
%\end{proof}

\end{document}